\def\llra{\hbox to 10mm{\rightarrowfill}}
\def\lllra{\hbox to 15mm{\rightarrowfill}}
\def\phi{{\varphi}}
\def\cI{\mathcal{I}}
\def\cF{\mathcal{F}}
\def\cO{\mathcal{O}}
\def\cP{\mathcal{P}}
\def\cM{\mathcal{M}}
\def\cW{\mathcal{W}}
\let\tilde\widetilde
\DeclareMathOperator{\rank}{rank}
\DeclareMathOperator{\codim}{codim}
\DeclareMathOperator{\Pic}{Pic}
\DeclareMathOperator{\Hom}{Hom}
\DeclareMathOperator{\Alb}{Alb}
\newtheorem{lemm}{Lemma}[section]
\newtheorem{theo}[lemm]{Theorem}
\newtheorem{coro}[lemm]{Corollary}
\newtheorem{prop}[lemm]{Proposition}
\newtheorem*{conj*}{Conjecture}
\theoremstyle{definition}
\newtheorem{rema}[lemm]{Remark}
\newtheorem{exam}[lemm]{Example}
\theoremstyle{remark}
\newtheorem*{remark*}{Remark}
\newtheorem*{note*}{Note}
\theoremstyle{remark}
\begin{document}
\title[Surfaces of general type with $p_g=q=2$]{Cohomological rank functions and surfaces of general type with $p_g=q=2$}
\author{Jiabin Du}
\address{Shanghai Center for Mathematical Sciences, China}

\email{jiabindu@fudan.edu.cn}
\author{  Zhi Jiang}
\address{Shanghai Center for Mathematical Sciences, China}
\email{zhijiang@fudan.edu.cn}

\author{ Guoyun Zhang}
\address{Shanghai Center for Mathematical Sciences, China}
\email{gyzhang21@m.fudan.edu.cn}
\date{\today}

\keywords{Surfaces of general type, Generic vanishing, Polarized abelian surfaces}

\subjclass{14J17, 14K05, 14K12}

\begin{abstract}
 We classify minimal surfaces $S$ with $p_g=q=2$ and $K_S^2=5$ or $6$.
\end{abstract}
\maketitle

\section{Introduction}
We work over complex number field $\mathbb C$ throughout this article.

 A minimal surface $S$ of general type with $\chi(\cO_S)=\chi(\omega_S)=1$ is in some sense the simplest surface of general type since the important birational invariant $\chi(\cO_S)$ takes the minimal value. The classification of such surfaces is a major problem in surface theory (see \cite[Section 2]{BCP}).

 For these surfaces, Beauville pointed out in the appendix of \cite{D} that $p_g=q\leq 4$ and $S$ is isomorphic to the product of two smooth projective curves of genus $2$ when the equality holds.

 When $p_g=q=3$ or  $p_g=q=2$ and the Albanese morphism $a_S$ is a fibration onto a curve, a full classification has been worked out  due to the effort of several people (see \cite{HP, Pg, Z, Pm}).

  The case when $p_g=q=2$ and $a_S$ is generically finite is already wildly open (see \cite{AC, Pm2}). We briefly mention in the following the state-of-art of  this topic.

 By Debarre's inequality and Miyaoka-Yau's inequality, we know that $4\leq K_S^2\leq 9$. When $K_S^2=4$,  we know that $S$ is a double cover of a  principally polarized abelian surface $(A, \Theta)$ branched over a divisor $D\in |2\Theta|$ with negligible singularities (see \cite{CMLP}). Indeed, the surface is on the Severi line and by the work of \cite{BPS1} and \cite{LZ}, it should be a simple double cover of a principally polarized abelian surface.

 Chen and Hacon constructed the first family of minimal surfaces $S$ of general type with $K_S^2>4$, $p_g=q=2$, and $a_S$ generically finite in \cite{CH}. These surfaces are called Chen-Hacon surfaces. A Chen-Hacon surface $S$ satisfies the property that $K_S^2=5$, $p_g=q=2$, and $\deg a_S=3$. They have also been extensively studied in \cite{PP1} and \cite{AC}.

 Since Chen and Hacon's work, many surfaces with $p_g=q=2$ and $6\leq K_S^2\leq 8$ have been constructed (see \cite{AC,CP, Pm, PP2, PP3,PP4, PRR, R}). There are several different ways to construct these surfaces: double covers of an abelian surface branched over a curve with prescribed singularities,
diagonal or mixed quotients of a product of curves by  finite groups, and  triple or quadruple covers of abelian surfaces.

 Nevertheless, a fine classification theory of surfaces with $5\leq K_S^2\leq 9$ and $p_g=q=2$ was still missing. Even an effective bound of the degrees of the Albanese maps of these surfaces was out of reach.

 This paper solves two cases among the previously unknown five cases.

 \begin{theo}\label{5} A minimal surface $S$ of general type with $K_S^2=5$ and $p_g=q=2$ is a Chen-Hacon surface.
 \end{theo}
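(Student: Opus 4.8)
The plan is to run the whole argument through the Albanese morphism $f:=a_S\colon S\to A$, where $A=\Alb(S)$ is an abelian surface: first prove $\deg f=3$, then reconstruct the triple-cover datum and match it with Chen and Hacon's construction.

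\emph{Step 0: reduction to $\deg f=3$ or larger.} If $f(S)$ is a curve, it generates $A$, hence has genus $g(B)\ge2$, and $S$ carries a relatively minimal fibration over a curve $B$ of genus $\ge2$ with fibres $F$ of genus $g(F)\ge2$ (rational and elliptic fibres are excluded since $K_S$ is big and nef); then $K_S^2\ge 8(g(B)-1)(g(F)-1)\ge8>5$, a contradiction. So $f$ is generically finite of degree $d$; it is not birational (Kodaira dimensions differ), so $d\ge2$. If $d=2$, then $S$ is birational to a double cover of $A$ branched along $B\in|2L|$ with $\chi(\cO_S)=\frac12 L^2$, forcing $L^2=2$ and $K_S^2=2L^2=4\ne5$ — in accordance with the fact that $K_S^2=4\chi(\cO_S)$, the Severi line, is realised exactly by such covers (\cite{CMLP}, \cite{BPS1}, \cite{LZ}). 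Hence $d\ge3$.

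\emph{Step 1: $d=3$.} Put $\cF:=f_*\omega_S$, a $GV$-sheaf on $A$ of rank $d$ with $\chi(\cF)=\chi(\omega_S)=1$ and $h^0(\cF)=p_g=2$. Since $S$ has maximal Albanese dimension, generic vanishing gives $h^0(\omega_S\otimes P)=1$ for generic $P\in\Pic^0(A)$ and $\ge1$ for every $P$, i.e.\ $V^0(\omega_S)=\Pic^0(A)$. Fixing a polarization $\Theta$ on $A$, the cohomological rank function $x\mapsto h^0_{\cF}(x\Theta)$ of Jiang--Pareschi is continuous and piecewise linear, and its behaviour near $x=0$ records both $h^0(\cF)=2$ and the generic value $\chi(\cF)=1$. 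Combining this with the Chen--Jiang decomposition of $\cF$ — which, as $\chi(\cF)=1$, has exactly one $M$-regular summand supported on all of $A$, of Euler characteristic $1$, the others being pulled back from elliptic quotients or trivial — and with a Clifford--Severi type volume inequality (the refinement, via cohomological rank functions, that bounds $\deg f$ from above in terms of $K_S^2$), the value $K_S^2=5$ is seen to be incompatible with $d\ge4$. With Step 0 this gives $d=3$. This step is the main obstacle: the qualitative generic vanishing that settles $K_S^2=4$ is not enough, and one must know the cohomological rank function of $f_*\omega_S$ precisely in order to make the volume bound sharp.

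\emph{Step 2: structure and identification.} With $d=3$ one checks that $f$ is finite (it can only contract $(-2)$-curves, and contracting them leaves $K_S^2$ and $\chi(\cO_S)$ unchanged and does not affect the conclusion). By the structure theory of degree-$3$ covers, $f$ is encoded by a rank-$2$ bundle $\cE$ on $A$ with $f_*\cO_S=\cO_A\oplus\cE^\vee$, hence $f_*\omega_S=\cO_A\oplus\cE$, together with a relative cubic form defining $S\subset\P(\cE)$. Now $\chi(\cO_S)=1$ gives $c_1(\cE)^2-2c_2(\cE)=2$; the triple-cover formula together with $K_S^2=5$ pins down $c_2(\cE)=1$ and $c_1(\cE)^2=4$; and $V^0(\omega_S)=\Pic^0(A)$ forces $\cE$ to be $M$-regular, so $\det\cE$ is a polarization of type $(1,2)$ on $A$. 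These are exactly the input data of the Chen--Hacon construction \cite{CH}; a final check that the admissible cubic forms sweep out an irreducible family of Chen--Hacon surfaces completes the proof. The residual difficulty here is passing from ``triple cover of a $(1,2)$-polarized abelian surface with these Chern classes'' to ``Chen--Hacon surface'', i.e.\ controlling the cubic form itself.
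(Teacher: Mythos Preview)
Your outline names the right circle of ideas, but Step~1 is not a proof: it gestures at cohomological rank functions and a ``Clifford--Severi type volume inequality'' without executing the argument, and this is precisely where all the work lies. Concretely, two things are missing.

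First, you assert that the Chen--Jiang decomposition of $f_*\omega_S$ has one M-regular summand plus possible elliptic pullbacks, but you never rule out the elliptic summands. The paper devotes an entire section to this: one must show that if $p^*M\otimes Q$ (with $p\colon A\to E$ an elliptic quotient) appears as a summand, then after an \'etale base change $S$ acquires a fibration over a curve $C$ of genus $\ge 2$, and a careful analysis of $\deg h_*\omega_{\tilde S/C}$ (using Xiao's classification of genus-$2$ fibrations with a fixed elliptic part) forces $K_S^2=8$. Without this, you do not know $f_*\omega_S=\cO_A\oplus\cM$ with $\cM$ M-regular, and the whole Fourier--Mukai analysis that follows is unavailable.

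Second, and more seriously, the actual mechanism that pins down $\deg f$ is absent. In the paper one shows that $\Phi_{\cP}(\cM^\vee)[2]=L\otimes\cI_Z$ for an ample $L$ on $\hat A$ and a finite scheme $Z$ with $\deg f=h^0(L)+1-\mathrm{length}(Z)$; after base change by $\phi_L$ one computes explicitly $h^0_{\hat a_*\omega_{\hat S},L}(t)=h^0(L)^2(t+1)^2$ and $h^0_{\hat a_*\omega_{\hat S}^{\otimes 2},L}(2t)=h^0(L)^2(1+K_S^2)+12h^0(L)^2t+4(\deg f)h^0(L)t^2$ for $-1\ll t\le 0$. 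One then proves, using $\deg f\ge 3$, that the eventual map of $K_{\hat S}\langle tL\rangle$ is birational for $-1<t\le 0$, which is exactly the hypothesis needed to invoke the Barja--Pardini--Stoppino equality characterisation: $K_S^2=5\chi(\omega_S)$ forces $h^0_{2K}(2t)=6\,h^0_K(t)$ identically, and comparing $t^2$-coefficients gives $4\deg f=6h^0(L)$. Together with $\deg f\le h^0(L)+1$ this yields $h^0(L)=2$, $\deg f=3$, and $\cM=\widehat L^\vee$. None of this chain --- the Fourier--Mukai description of $\cM$, the explicit rank-function computation, the birationality of the eventual map, the equality case of BPS --- appears in your Step~1. (Incidentally, the rank function is piecewise \emph{polynomial} of degree $2$, not piecewise linear.)

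Two smaller points. In Step~0 your treatment of $d=2$ assumes a smooth branch divisor; with singular branch one must track the resolution and show $K_S^2$ stays even, as the paper does. In Step~2 the claim that $f$ itself is finite is not correct in general (it may contract $(-2)$-curves); one passes to the Stein factorisation $\overline S\to A$, checks that $a_{S*}\cO_S$ being locally free forces $\overline S$ to have only negligible singularities, and then invokes the Penegini--Polizzi classification \cite{PP1} to identify $S$ as a Chen--Hacon surface.
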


This theorem answers various questions raised in  \cite{PP1} or \cite{AC} about the possible structures of minimal surfaces  of general type with $K^2=5$ and $p_g=q=2$.

 We also determined the possible structures of  minimal surfaces  of general type with $K^2=6$ and $p_g=q=2$, which answers open questions asked in \cite[Question 3.3]{Pm2} and \cite[Section 6]{PP3}.
.
 \begin{theo}\label{6}Assume that $S$ is a minimal surface of general type  with $K_S^2=6$ and $p_g=q=2$, then $2\leq \deg a_S\leq 4$. Moreover, surfaces with $2\leq \deg a_S\leq 4$ are exactly those constructed by Penegini-Polizzi in \cite{PP2}, Alessandro-Catanese in \cite{AC}, and Penegini-Polizzi in \cite{PP3}.
 \end{theo}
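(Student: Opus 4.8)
The plan is to analyse the Albanese morphism $a_S\colon S\to A$ through the sheaf $V=a_{S*}\omega_S$ on the abelian surface $A=\Alb(S)$. First, $a_S$ is automatically generically finite: otherwise, after Stein factorisation it would be a fibration onto a genus-$2$ curve $B$, and $\chi(\cO_S)=1$ would force the general fibre to have genus $2$ and the fibration to be locally trivial, whence $K_S^2=K_{S/B}^2+8\geq 8>6$. So $d:=\deg a_S$ is defined, $a_S$ is surjective, and $d\geq 2$ since $d=1$ would make $S$ birational to an abelian surface. By Koll\'ar's torsion-freeness theorem (together with generic finiteness of $a_S$) $R^{i}a_{S*}\omega_S=0$ for $i>0$, hence $h^{i}(A,V)=h^{i}(S,\omega_S)$; thus $V$ has rank $d$, $h^{0}(V)=h^{1}(V)=2$, $h^{2}(V)=1$ and $\chi(V)=\chi(\omega_S)=1$, the smallest value a nonzero GV-sheaf can take. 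By Generic Vanishing $V$ is a GV-sheaf, so $h^{i}(V\otimes P)=0$ for $i>0$ and general $P\in\widehat A$, $h^{0}(V\otimes P)=1$ for general $P$ (hence $V^{0}(V)=\widehat A$), and each $V^{i}(V)$ is a finite union of torsion translates of subtori. The trace splitting (char.\ $0$) gives $a_{S*}\cO_S=\cO_A\oplus Q$ with $Q$ torsion-free of rank $d-1$; by relative duality $V$ is a finite-colength subsheaf of $(a_{S*}\cO_S)^{\vee}=\cO_A\oplus Q^{\vee}$, the colength being the adjunction defect of the normalisation $X$ of $S$ in its function field, $c_1(V)=\tfrac12(a_S)_{*}K_S$ by Grothendieck--Riemann--Roch, and $h^{2}(V\otimes P)=h^{0}(S,a_S^{*}P^{-1})$ is nonzero only for $P=\cO_A$.

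The heart of the argument — and where cohomological rank functions enter — is the bound $d\leq 4$. For an ample class $\ell$ on $A$ the function $x\mapsto h^{0}_{V,\ell}(x)$ is continuous and piecewise polynomial, equals $2$ at $x=0$, and for $x\gg0$ equals $\chi(V\otimes x\ell)=1+x\,c_1(V)\!\cdot\!\ell+\tfrac{d}{2}(\ell^{2})x^{2}$; its leading coefficient $\tfrac{d}{2}(\ell^{2})$ records the "amount of positivity" of $V$. Equivalently, $V$ being GV with $\chi(V)=1$ forces its GV-dual $\widehat V$ to be a rank-one torsion-free sheaf on $\widehat A$, say $\widehat V\cong N\otimes I_Z$ with $N\in\Pic(\widehat A)$ and $Z$ zero-dimensional, satisfying $\ch_2(\widehat V)=d$ and $c_1(N)=\pm\tfrac12\widehat{(a_S)_{*}K_S}$. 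I would then play off (i) the convexity and integrality of $h^{0}_{V,\ell}$ forced by GV, (ii) the splitting $V=\cO_A\oplus V_0$ with $h^{0}(V_0)=1$ and $h^{1}(V_0)=h^{2}(V_0)=0$, which makes the transform of $V_0$ extremely rigid (for $d=2$ a line bundle, in general a twist of the ideal of a short scheme), against (iii) the numerical constraint $K_S^{2}=6$. The subtle point is that $K_S^2$ is \emph{not} determined by $\ch(V)$ alone: writing $S\to X\to A$ one must track the possibly non-canonical (e.g.\ log-canonical) singularities of $X$, whose adjunction defects enter both $\chi(a_{S*}\omega_S)=1$ — relating $N^{2}$, $\operatorname{length}(Z)$ and these defects — and the formula for $K_S^2$ in terms of the covering data. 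Assuming $d\geq 5$ makes the positivity of $V$ too large to fit $K_S^2=6$; one is left with $d\in\{2,3,4\}$, and the same bookkeeping pins down a distinguished polarisation on $A$: type $(1,2)$ for $d=2$, a principal polarisation for $d=3$, and $(1,1)$- or $(1,2)$-type data for $d=4$. The proof of Theorem~\ref{5} is the model for this step, where the mechanism leaves $d=3$ as the only possibility.

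With $d$ and the polarisation fixed, I would recover the cover from the eigensheaf structure of $a_{S*}\cO_S$ together with $V$. For $d=2$: $a_{S*}\cO_S=\cO_A\oplus M^{-1}$ exhibits $S$ as the canonical resolution of a double cover of $A$ branched along $B\in|2M|$ whose singularities are exactly those (including the log-canonical point responsible for lowering $K^2$ from $8$ to $6$) allowed by minimality, i.e.\ a surface in the Penegini--Polizzi family of \cite{PP2}. For $d=3$: applying the Casnati--Ekedahl structure theorem to the finite part of $a_S$, the Tschirnhausen sheaf $\mathcal E$ has $\det\mathcal E$ a principal polarisation and $a_S$ is cut out inside $\P(\mathcal E)$ by a relative cubic; matching this with $V=a_{S*}\omega_S$ identifies $S$ with an Alessandro--Catanese surface \cite{AC}. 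For $d=4$: one first shows, from the $\Pic^{0}$-decomposition of $a_{S*}\cO_S$ forced by $\chi(V)=1$ together with $h^{0}(V)=h^{1}(V)=2$, that $a_S$ is Galois with group $(\Z/2)^{2}$ — excluding the $\Z/4$ and the non-Galois alternatives by the numerical constraints — after which Pardini's theory of abelian covers, applied to the building data read off the eigensheaves, identifies $S$ with a Penegini--Polizzi bidouble cover \cite{PP3}. Conversely all three families have $p_g=q=2$, $K_S^{2}=6$ and the stated Albanese degree, completing the classification.

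The main obstacle is the degree bound: the link between the Fourier--Mukai / rank-function picture of $V$ and the invariant $K_S^2$ is not formal, because $K_S^2$ sees the (possibly log-canonical) singularities of the normalisation $X$ and not merely $\ch(V)$, so forcing $d\leq 4$ needs the full strength of the rank-function machinery together with careful control of the adjunction defects. Within the reconstruction the case $d=4$ is the hardest — one must rule out every non-$(\Z/2)^{2}$ degree-four cover and then match the branch data precisely — and a recurring technical nuisance throughout is that $a_S$ need not be finite, because of $(-2)$-curves of $S$ contracted to points of $A$, which forces the triple $S\to X\to A$ to be carried through every sheaf-theoretic step.
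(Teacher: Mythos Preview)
Your general setup is sound --- the splitting $a_{S*}\omega_S=\cO_A\oplus\cM$ with $\cM$ M-regular, and the Fourier--Mukai description $\Phi_{\cP}(\cM^\vee)[2]\simeq L\otimes\cI_Z$ on $\hat A$ --- but the proposal has two genuine gaps.

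\textbf{The degree bound.} You assert that ``assuming $d\geq 5$ makes the positivity of $V$ too large to fit $K_S^2=6$'' by playing convexity and integrality of rank functions against the numerics, but this is not how the bound arises and I do not see how to make it work. Cohomological rank functions by themselves yield only the intersection number $(K_{\hat S}\cdot\hat a^*L)=4h^0(L)^2$ (Lemma~\ref{crf1}); from there the paper passes to the \'etale cover $\hat S\to\hat A$ via the isogeny $\phi_L$, proves that the eventual paracanonical map of $\hat S$ is actually a \emph{morphism} (Proposition~\ref{eventual=canonical}), and then studies the linear system $|\eta|=|K_{\hat S}-\hat a^*L^-|$ through the induced map $\psi\colon\hat S\to\mathbb P^{h^0(L)-1}$. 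Castelnuovo's genus bound and Clifford's lemma applied to hyperplane sections of $\psi(\hat S)$ give only $h^0(L)\leq 6$, hence $d\leq 7$ (Theorem~\ref{boundness}). Cutting $7$ down to $4$ requires a completely different ingredient, the finite Heisenberg group $\mathcal K_L$: the representation of $\mathcal K_L$ on $H^0(\eta)$ is the dual Schr\"odinger representation (Proposition~\ref{K_L}), so the image $W=\psi(\hat S)\subset\mathbb P^{h^0(L)-1}$ is $K_L$-invariant, and a case-by-case exclusion (minimal-degree surfaces in $\mathbb P^5$; octics, quartics and quadrics in $\mathbb P^3$) rules out $h^0(L)=6$ and $h^0(L)=4$ when $r=0$. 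None of this symmetry argument is visible from the rank-function convexity you invoke, and indeed the $K_S^2=5$ argument you cite as a model is special precisely because that case sits on the Barja--Pardini--Stoppino equality line, which $K_S^2=6$ does not.

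\textbf{The identifications.} Your polarisation types and cover structures are wrong in the two nontrivial cases. For $d=3$ the polarisation on $\hat A$ is of type $(1,3)$, not principal: this is the case $r=\mathrm{length}(Z)=1$, $h^0(L)=3$, and $S$ is an AC3 surface, with $\hat S$ sitting inside $\hat A\times C$ for an elliptic cubic $C\subset\mathbb P^2$ carrying a faithful $(\mathbb Z_3)^2$-action (Theorem~\ref{AC3}); there is no Casnati--Ekedahl triple-cover description over a principally polarised $A$. For $d=4$ the polarisation is again of type $(1,3)$ (now $r=0$, $h^0(L)=3$), not $(1,1)$ or $(1,2)$, and the PP4 surfaces are \emph{not} $(\mathbb Z_2)^2$-Galois covers: they are general flat quadruple covers in the sense of Hahn--Miranda, determined by a totally decomposable section of $H^0(A,\wedge^2S^2\cM\otimes\det\cM^{-1})$. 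Your proposed route --- show the degree-$4$ cover is bidouble and apply Pardini's abelian-cover theory --- would not recover these surfaces and would in fact be trying to prove a false statement.
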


We outline the proof of the main theorems in the following.

In order to prove Theorems \ref{5} and \ref{6}, the essential point is to bound the degree of the Albanese morphism $a_S: S\rightarrow A$ of $S$. The idea is to study possible structures of $a_{S*}\omega_S$, whose rank is equal to the degree of the Albanese morphism since we may assume that $a_S$ is generically finite and surjective.

Our first main tool is the Chen-Jiang decomposition for $a_{S*}\omega_S$. The Chen-Jiang decomposition for pushforward (or higher direct images) of canonical bundles to abelian varieties was first proved in \cite{CJ} and generalized in \cite{PPS}. It is a very powerful tool to deal with the birational geometry of varieties of maximal Albanese dimensions or irregular varieties (see \cite{CJT, CLP, DJL, J}).
We  show in Section 3 that the Chen-Jiang decomposition of $a_{S*}\omega_S$ is simple in the sense that it is the direct sum of $\cO_A$ with a M-regular sheaf $\cM$ and hence $\deg a_S=\rank \cM+1$.

 Our next task is to analyse $\cM$. Chen and Hacon already observed in \cite{CH} that the Fourier-Muaki transform of the derived dual of $\cM$ is the shift of a rank $1$ torsion-free sheaf $L\otimes\cI_Z$ on the dual abelian surface $\hat{A}$ of $A$, where $L$ is an ample line bundle on $\hat{A}$ and $Z$ is a finite subscheme of $\hat{A}$ (see Section 4). It is then natural to consider the base change
   \begin{eqnarray*}
   \xymatrix{
   \hat{S}\ar[r]^{\hat{a}}\ar[d] & \hat{A}\ar[d]^{\phi_L}\\
   S\ar[r]& A,}
   \end{eqnarray*}
   where $\phi_L: \hat{A}\rightarrow  A$ is the standard isogeny induced by $L$.
 With the polarization $L$, we can apply  cohomological rank functions developed in \cite{JP} to $\hat{a}_*\omega_{\hat{S}}$.  The computation of cohomological rank functions provides an important intersection number $(K_{\hat{S}}\cdot \hat{a}^*L)=4h^0(L)^2$ (see Lemma \ref{crf1}).
We then  study in detail the linear system $|K_{\hat{S}}-\hat{a}^*L|$, which leads us to the proof of Theorem \ref{5} and \ref{6}.

The proof of Theorem \ref{5} is a bit easier, since surfaces with $K_S^2=5$ are, in some sense,  on the line of Barja-Pardini-Stoppino equality (see Subsection 2.6) and thus cohomological rank functions provide enough information to show that the  Albanese morphisms are always of degree $3$.

 The proof of Theorem \ref{6} requires much more effort. We first apply Castelnuovo's theory  on genus of space curves and other geometric considerations to show that the degrees of the Albanese morphisms are $\leq 7$.  Then we need to apply our third main tool, namely the symmetry induced by the finite Heisenberg groups. By  considering the symmetry, we  exclude many cases, which a priori could occur when $K_S^2=6$ and finish the proof of Theorem \ref{6}.  This part is inspired by the work of Alessandro-Catanese in \cite{AC}.
\subsection*{Acknowledgements}
The authors thank Fabrizio Catanese and Carlos Rito for comments.
The second author thanks Mart\'i Lahoz for discussions on this topic several years ago in Orsay and thanks Songbo Lin for his interests about possible applications of the Chen-Jiang decomposition on this topic.

 The second author is a member of the Key Laboratory of Mathematics for Nonlinear Science, Fudan University and he is supported  by the Foundation for Innovative Research Groups of the Natural Science Foundation of China (No.~12121001), by the National Key Research and Development Program of China (No.~2020YFA0713200), and by the Natural Science Foundation of Shanghai (No.21ZR1404500, No.23ZR1404000).

 \section{Preliminaries}

 \subsection{The Fourier-Mukai transform}
 Let $X$ be a smooth projective variety.  We denote by $\mathrm D^b(X)$ the derived category of bounded complexes of coherent sheaves.
 For any object $\cF\in \mathrm D^b(X)$, we denote by $\cF^{\vee}:=\mathcal{H}om(\cF, \cO_X)\in \mathrm D^b(X)$ the derived dual object.

 Let $A$ be an abelian variety of dimension $g$ and $\hat{A}=\Pic^0(A)$ be the dual abelian variety. Let $\cP$ be the normalized Poincar\'e bundle on $A\times \hat{A}$ and $p_1$ and $p_2$ be respectively the projection from $A\times \hat{A}$ to $A$ and $\hat{A}$. Then the functor between the derived categories of bounded complexes of coherent sheaves
 \begin{eqnarray*}\Phi_{\cP}: \mathrm D^b(A)\rightarrow \mathrm D^b(\hat{A})\\
 \star\rightarrow p_{2*}(p_1^*(\star)\otimes \cP)
 \end{eqnarray*}
 is called the Fourier-Mukai transform.   The functor with the same kernel     \begin{eqnarray*}\Psi_{\cP}: \mathrm D^b(\hat{A})\rightarrow \mathrm D^b(A)\\
 \star\rightarrow p_{1*}(p_2^*(\star)\otimes \cP),
 \end{eqnarray*}
 is called the inverse Fourier-Mukai transform. It is shown in (\cite{Mu}) that
 \begin{eqnarray}\label{FM1}\Psi_{\cP}\circ \Phi_{\cP}\simeq (-1_A)^* [g] \nonumber\\\Phi_{\cP}\circ \Psi_{\cP}\simeq (-1_{\hat{A}})^* [g]
 \end{eqnarray}
 and hence $\Phi_{\cP}$ and $\Psi_{\cP}$ induce  derived equivalences between the two derived categories.

For any object $\cF\in \mathrm D^b(\hat{A})$, it is shown in (\cite[Formula 3.8]{Mu}) that
\begin{eqnarray}\label{FM2}\Psi_{\cP}(\cF)^{\vee}\simeq (-1_{A})^*\Psi_{\cP}(\cF^{\vee})[g].
\end{eqnarray}

 Let $\cF$ be a coherent sheaf on an abelian variety $A$.  Denote by $$V^i(\cF):=\{P\in \Pic^0(A)\mid H^i(A, \cF\otimes P)\neq 0\}$$ the $i$-th cohomological support loci. We say that $\cF$ is IT$^0$ if $V^i(\cF)=\emptyset$ for each $i>0$.  By cohomology and base change, $\cF$ is IT$^0$ if and only if $\Phi_{\cP}(\cF)=R^0\Phi_{\cP}(\cF)$ is a vector bundle on $\hat{A}$. In this case, we write $\widehat{\cF}:=R^0\Phi_{\cP}(\cF)$.

 It is well-known that if $L$ is an ample line bundle on $A$,  $L$ is IT$^0$. Moreover, if we denote by $\varphi_L: A\rightarrow \hat{A}$ the isogeny induced by $L$, then $\varphi_L^*\widehat{L}\simeq H^0(L)\otimes L^{-1}$.

 We say that $\cF$ is GV (resp. M-regular) if $$\codim_{\Pic^0(A)}V^i(\cF)\geq i$$ (resp. $\codim_{\Pic^0(A)}V^i(\cF)> i$) for each $i>0$.

 It is also known (see \cite{PaPo2} and \cite{PaPo3}) that $\cF$ is GV (resp. M-regular) iff $\Phi_{\cP}((\cF)^{\vee})[g]=R^g\Phi_{\cP}((\cF)^{\vee})$ is a coherent sheaf (resp. torsion-free coherent sheaf). Moreover, Pareschi and Popa showed in \cite{PaPo1} that if $\cF$ is M-regular, it is continuously globally generated, which means that for any non-empty Zariski open subset $U\subset \Pic^0(A)$, the evaluation map $$\bigoplus_{P\in U}H^0(\cF\otimes P)\otimes P^{-1}\rightarrow \cF$$ is surjective.
 \subsection{Generic vanishing theory}

 Given a morphism $f: X\rightarrow A$ from a smooth projective variety to an abelian variety, Hacon proved in \cite{H} that the push-forward of the canonical bundle $f_*\omega_X$ is a GV sheaf. This important result was strengthened in \cite{CJ} when $f$ is generically finite and in \cite{PPS} in general. We then know that there exist quotients between abelian varieties with connected fibers $p_i: A\rightarrow A_i$, M-regular sheaves $\cF_i$ on $A_i$, and torsion line bundles $Q_i\in \Pic^0(A)$ such that
 \begin{eqnarray}\label{CJ}
 f_*\omega_X\simeq \bigoplus_i p_i^*\cF_i\otimes Q_i.
 \end{eqnarray}
This formula is called the Chen-Jiang decomposition of $f_*\omega_X$.

\subsection{Cohomological rank functions}

Let $\cF$ be a coherent sheaf on an abelian variety $A$ and $L$ an ample line bundle on $A$.  Recall that the $i$-th cohomological rank functions in \cite{JP}, which are exactly the continuous rank functions defined in \cite{BPS} when $i=0$: $$h_{\cF, L}^i: t\in \mathbb Q\rightarrow \frac{1}{M^{2\dim A}}h^i(A, \pi_M^*\cF\otimes L^{M^2t}\otimes Q)\in \mathbb Q,$$ where $M$ is a  sufficiently divisible integer, $\pi_M: A\rightarrow A$ is the multiplication-by-$M$ map, and $Q\in \Pic^0(A)$ general. We verify easily that $h_{\cF, L}^i(t)$ is independent of the choice of $M$.
It is also know that $h_{\cF, L}^i$ can be extended to a continuous function from $\mathbb R$ to $\mathbb R$ and in a small right or left neighborhood of a rational point $x\in \mathbb Q$, $h^i_{\cF, L}$ is  a polynomial function (see \cite[Theorem A]{JP}).

Cohomological rank functions of $\cF$ have deep connections with the positivity of $\cF$. We just mentioned that when $\cF$ is IT$^0$, $h_{\cF, L}^0(t)=\chi(\cF\otimes L^{\otimes t})$ for $-1\ll t<0$ (see \cite[Theorem 5.2]{JP}).
\subsection{The eventual maps}
Let $f: X\rightarrow A$ be a morphism from a smooth projective variety to an abelian variety.
Let $H$ be a line bundle on $X$ such that $$h^0(X, H\otimes f^*Q)>0$$ for $Q\in \Pic^0(A)$. Barja, Pardini, and Stoppino defined in \cite{BPS} the eventual map of $H$ with respect to $f$. In \cite[Lemma 2.3]{J}, it is showed that there exists a coherent subsheaf $(f_*H)_c$ of $f_*H$ such that $(f_*H)_c$ is continuously globally generated and $$H^0(A, (f_*H)_c\otimes Q)=H^0(A, f_*H\otimes Q)$$ for $Q\in \Pic^0(A)$ general and the eventual map of $H$ with respect to $f$ is birationally equivalent to the relative evaluation map of $(f_*H)_c$ on $f(X)$. We note that $(f_*\omega_X)_c$ is nothing but the M-regular direct summand of $f_*\omega_X$ in (\ref{CJ}). The eventual map of $K_X$ is called the eventual paracanonical map of $X$.

We can also formally consider the eventual map of $H\langle tL\rangle$ for $t\in \mathbb Q$ as follows. We may take a positive integer $M$ such that $M^2t\in \mathbb Z$ and let
\begin{eqnarray*}
\xymatrix{
X_M\ar[r]^{f_M}\ar[d]^{\mu_M} & A\ar[d]^{\pi_M}\\
X\ar[r]^f & A}
\end{eqnarray*}
be the Cartesian. We say that the eventual map of $H\langle tL\rangle$ with respect to $f$ is   birational (resp. of degree $d$) if the eventual map of $\mu_M^*H\otimes f_M^*(L^{\otimes (M^2t)})$  with respect to $f_M$ is birational (resp. of degree $d$).
This definition is independent of the choice of $M$ (see \cite[Lemma 2.10]{J}) and hence is well-defined.

\subsection{The  Barja-Pardini-Stoppino inequality}

Barja, Pardini, and Stoppino refined the classical Severi inequality in \cite{BPS}. For a minimal surface $S$ of maximal Albanese dimension, they showed that  $K_S^2\geq 5\chi(\omega_S)$ if the Albanese morphism $a_S$ of $S$ is birationally onto its image (see \cite[Corollary 5.6 and Proposition 6.14]{BPS}). Since the surfaces we consider here are covers of abelian surfaces, we need to modify their arguments.
 We can now state a variant of Corollary 5.6 of \cite{BPS}.

 \begin{prop}\label{BPS-inequ}Let $f: S\rightarrow A$ be a   morphism from a minimal surface of general type to an abelian variety such that $f$ is generically finite onto its image.  Let $L$ be an ample line bundle on $A$ such that the eventual map of $K_S\langle tL\rangle$ is birational onto its image whenever $h^0_{f_*\omega_S, L}(t)>0$ for rational $t<0$. Then $$K_S^2\geq 5\chi(\omega_S)$$ and the equality holds iff $$h^0_{f_*\omega_S^{\otimes 2}, L}(2t)=6h^0_{f_*\omega_S, L}(t)$$ for $t\leq 0$.
 \end{prop}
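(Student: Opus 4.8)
The plan is to mimic the Xiao--Barja slope method as developed by Barja--Pardini--Stoppino in \cite{BPS}, but run the whole argument on the twisted sheaves $f_*\omega_S\langle tL\rangle$ so that the continuous/cohomological rank functions play the role of the Euler characteristics. Concretely, I would pass to the covers $\mu_M\colon S_M\to S$ and $\pi_M\colon A\to A$ (multiplication by $M$), replacing $\omega_S$ by $\mu_M^*\omega_S\otimes f_M^*(L^{M^2t})$, and set up a one-parameter family of linear systems: for each rational $t<0$ with $h^0_{f_*\omega_S,L}(t)>0$ we have, by the hypothesis, that the eventual map of $K_S\langle tL\rangle$ is birational onto its image, so after the cover the corresponding eventual paracanonical-type map is birational. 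The function $t\mapsto h^0_{f_*\omega_S,L}(t)$ is the continuous rank function, it is piecewise polynomial and continuous, and it is positive on some interval $(-\varepsilon,0)$; by \cite[Theorem 5.2]{JP} it agrees with $\chi(f_*\omega_S\langle tL\rangle)=\chi(\omega_S)+\text{(lower order in }t)$ near $0^-$ if $f_*\omega_S$ were IT$^0$, but in general I only need its values and its behaviour as a volume-type invariant.

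The core step is a Clifford-plus-slope inequality on a general fibre. Choose a general pencil in the (twisted) paracanonical system computing the eventual map; blow up to get a fibration $h\colon \widetilde S_M\to \mathbb P^1$ whose general fibre $F$ is a smooth curve, and let $g=g(F)$. Because the eventual map is birational onto its image, the restriction of the relevant twisted canonical system to $F$ is birational, hence the moving part has degree $\ge 2g-2+\text{something}$ on $F$ — this is where the factor $5$ ultimately comes from (birationality forces $\deg\ge 2g-1$ generically, and Clifford's theorem on $F$ gives the complementary bound on the number of sections). Running Xiao's inequality $K^2_{\widetilde S_M/\mathbb P^1}\ge (\text{slope})\cdot\deg h_*\omega_{\widetilde S_M/\mathbb P^1}$ together with the Harder--Narasimhan filtration of $h_*\omega$, and then summing over the pencil (equivalently, letting the pencil vary to cover the surface) and dividing by $M^{2\dim A}=M^4$, the inequalities for $S_M$ descend to the numerical inequality $K_S^2\ge 5\chi(\omega_S)$ for $S$ itself. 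The appearance of $\chi(\omega_S)$ rather than $h^0_{f_*\omega_S,L}(0)$ on the right is because $\chi(\omega_S)=h^0(\omega_S)-h^1(\omega_S)+h^2(\omega_S)$ and, for $S$ of maximal Albanese dimension, $\chi(\omega_S)=\mathrm{rank}(f_*\omega_S)_c\cdot(\dots)$ — more precisely one uses that the leading term of the continuous rank function near $0^-$ encodes $\chi(\omega_S)$, exactly as in \cite{BPS,J}.

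For the equality case, one tracks when every inequality used is an equality. Xiao's slope inequality is sharp only when all Harder--Narasimhan slopes of $h_*\omega_{\widetilde S_M/\mathbb P^1}$ are equal and each graded piece is ``as positive as possible''; the Clifford bound on $F$ is sharp only when the twisted canonical series restricted to $F$ is the complete series of a line bundle that is either trivial or computes Clifford equality — forcing $F$ hyperelliptic-type behaviour — and when the moving part equals exactly $2g-1$ (birational but not more). Translating these fibrewise equalities back through the Fourier--Mukai/rank-function dictionary, the statement ``all slopes equal and maximal'' becomes precisely the identity of cohomological rank functions $h^0_{f_*\omega_S^{\otimes 2},L}(2t)=6\,h^0_{f_*\omega_S,L}(t)$ for $t\le 0$: the left side computes (a normalized) $h^0$ of the bicanonical twist, the $6$ is $\binom{2\cdot 2}{2}/\text{something}$ coming from $\chi(\omega_F^{\otimes 2})=3(g-1)$ versus $\chi(\omega_F)=g-1$ on the fibre combined with the base contribution, and the relative-canonical-bundle computation closes the loop. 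So the plan is: (i) reduce to $S_M$ via $\pi_M$; (ii) build the varying pencil from the birational twisted paracanonical map; (iii) apply Xiao/Clifford on the general fibre and the HN filtration of the direct image; (iv) sum, normalize, and descend to get $K_S^2\ge 5\chi(\omega_S)$; (v) chase the equality conditions back through $\Phi_{\cP}$ and \cite[Theorem 5.2]{JP}.

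The main obstacle I anticipate is step (iv)–(v), the bookkeeping that converts the fibrewise Xiao/Clifford equalities into the clean cohomological-rank-function identity with the specific constant $6$: one must make sure that the HN-slope equality over $\mathbb P^1$ for the twisted bicanonical direct image is equivalent to the claimed identity for all $t\le0$ and not just near $0$, which requires the polynomiality and continuity of $h^i_{\cF,L}$ from \cite[Theorem A]{JP} plus a semicontinuity/rigidity argument to propagate the equality along the whole negative ray. A secondary technical point is ensuring the passage to the cover $S_M$ does not destroy minimality or the ``general type'' hypothesis — this is handled by the standard observation that $\mu_M$ is étale, so $K_{S_M}^2=M^{2\dim A}K_S^2$ and $\chi(\omega_{S_M})=M^{2\dim A}\chi(\omega_S)$, and the birationality hypothesis on eventual maps is exactly designed to survive base change (see \cite[Lemma 2.10]{J}).
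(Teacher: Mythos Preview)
Your proposal takes a substantially harder route than the paper and, as written, does not actually close. The paper's argument is a three-line integration of a derivative inequality: set
\[
F(t)=h^0_{f_*\omega_S^{\otimes 2},L}(2t),\qquad G(t)=h^0_{f_*\omega_S,L}(t),
\]
observe that both are continuous, piecewise polynomial, and vanish for $t\ll 0$. The birationality hypothesis on the eventual map of $K_S\langle tL\rangle$ feeds directly into \cite[Proposition~5.4(ii)]{BPS}, which gives the pointwise left-derivative inequality $D^-F(t_0)\ge 6\,D^-G(t_0)$ for every rational $t_0<0$. Integrating from $-\infty$ to $0$ yields $F(0)\ge 6G(0)$, with equality iff $F\equiv 6G$ on $(-\infty,0]$. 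Finally $F(0)=\chi(\omega_S^{\otimes 2})=K_S^2+\chi(\omega_S)$ (since $f_*\omega_S^{\otimes 2}$ is IT$^0$) and $G(0)=\chi(\omega_S)$ (generic vanishing), so $K_S^2+\chi(\omega_S)\ge 6\chi(\omega_S)$.

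Your plan instead builds a pencil, passes to a fibration over $\mathbb P^1$, and invokes Xiao's slope inequality together with Harder--Narasimhan filtrations. This machinery is foreign to the actual mechanism: the BPS derivative inequality comes from restricting to a general member of the moving part of $|K_S\langle tL\rangle|$ and applying a Castelnuovo/Clifford bound on that \emph{single} curve, not from analysing a one-parameter family over $\mathbb P^1$. Your explanation of the constant $6$ (``$\binom{4}{2}/\text{something}$'', ``$\chi(\omega_F^{\otimes 2})=3(g-1)$ versus $\chi(\omega_F)=g-1$'') is not the source; the $6$ is simply the ratio $F(0)/G(0)$ forced by Riemann--Roch on the surface, and the derivative factor $6$ in $D^-F\ge 6D^-G$ is exactly what \cite[Proposition~5.4(ii)]{BPS} supplies under the birationality assumption. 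You also worry about propagating the equality along the whole ray $t\le 0$, but this is automatic once you phrase it as $F(0)-6G(0)=\int_{-\infty}^0(D^-F-6D^-G)\,dt$ with a nonnegative integrand: equality at $0$ forces the integrand, hence $F-6G$, to vanish identically. In short, drop the $\mathbb P^1$-fibration and HN filtration entirely; the whole proof is the derivative comparison plus the two evaluations $F(0)=K_S^2+\chi(\omega_S)$ and $G(0)=\chi(\omega_S)$.
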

 \begin{proof}

 We can simply mimic the proof of Corollary 5.6 of \cite{BPS}.
Let $F(t)=h^0_{f_*\omega_S^{\otimes 2}, L}(2t)$ and $G(t)=h^0_{f_*\omega_S, L}(t)$. At each rational point $t_0\in \mathbb Q$, the left derivatives $D^-F(t_0)$ and $D^-G(t_0)$ are well-defined.

 The assumption that  the eventual map of $K_S\langle tL\rangle$ is birational onto its image whenever $h^0_{f_*\omega_S, L}(t)>0$ for rational $t<0$ makes sure that $D^-F(t_0)\geq 6D^-G(t_0)$ for each rational number $t_0<0$ (see \cite[Proposition 5.4 (ii)]{BPS}). Since $F$ and $G$ are zero functions for $t$ sufficiently negative, $F(0)\geq 6G(0)$ and equality holds iff $F(t)=6G(t)$ for all $t\leq 0$. Finally, $F(0)=\chi(\omega_S)+K_S^2$ and $G(0)=\chi(\omega_S)$ by generic vanishing.
 \end{proof}

\subsection{Heisenberg groups}
Let $(\hat{A}, L)$ be a polarized abelian surface and we denote by $A:=\Pic^0(\hat{A})$ the dual abelian surface. We assume that $L$ is of type $(\delta_1, \delta_2)$, where $\delta_1|\delta_2$ are positive integers. It is known that $A$ carries a polarization of the same type (see \cite[14.4]{BL}).

We recall a few facts about the kernel group and the finite Heisenberg group of $L$ (see \cite[Section 6]{BL} or \cite[Section 1]{M}).

The kernel of the isogeny \begin{eqnarray*}\varphi_L: & \hat{A}&\rightarrow A \\
&x&\rightarrow t_x^*L\otimes L^{-1},
\end{eqnarray*}
where $t_x$ is the translation by $x$,
is denoted by $K_L$. Note that if $L$ is of polarization type $(\delta_1, \delta_2)$, $$K_L\simeq (\mathbb Z_{\delta_1}\times \mathbb Z_{\delta_2})^2$$
 and the corresponding finite Heisenberg group $\mathcal K_L$ of $L$ is a central extension of $K_L$:
$$1\rightarrow \mu_{\delta_2}\rightarrow \mathcal K_L\rightarrow K_L\rightarrow 0. $$ The finite Heisenberg group can be interpreted as the group of isomorphisms $(\chi, a)$ of $L$: $$\chi: L\xrightarrow{\simeq} t_a^*L, \;\;a\in K_L.$$

The group $\mathcal K_L$ is not commutative. Indeed, there exists a non-degenerate skew-symmetric pairing $e^L: K_L\times K_L\rightarrow \mathbb C^*$ measuring the noncommutativity of $\mathcal K_L$.
Write $A_L$ and $B_L$ two maximally isotropic group of $K_L$ with respect to  the skew-symmetric pairing $e^L$. Then $A_L\simeq \mathbb Z_{\delta_1}\times \mathbb Z_{\delta_2}$ and $B_L\simeq \widehat{A_L}:=\Hom(A_L, \mathbb C^*)\simeq \mathbb Z_{\delta_1}\times \mathbb Z_{\delta_2}$. As a group, $\mathcal K_L$ is isomorphic to $\mu_{\delta_2}\times A_L\times \widehat{A_L}$ with the group law $$(\alpha_1, x_1, l_1)\cdot (\alpha_2, x_2, l_2)=(\alpha_1\alpha_2\cdot l_2(x_1), x_1+x_2, l_1l_2).$$

The representation of $\mathcal K_L$ on the vector space $H^0(\hat{A}, L)$ is isomorphic to  the Schr\"odinger representation, which can be described as the irreducible representation of $\mathcal K_L$ on $$V_L:=\{\mathbb C\textit{-valued functions on } A_L\}$$ via $$(\alpha, x, l)\cdot f(z)=\alpha \cdot l(z)\cdot f(z+x).$$
 \subsection{Examples of surfaces of general type with $p_g=q=2$ and $K_S^2=5,6$ }

 In this subsection, we simply describe the constructions of Chen-Hacon surfaces (CH surfaces for short) constructed in \cite{CH}, Alessandro-Catanese surfaces (AC3 surfaces) constructed in \cite{AC} and \cite{CS} and Penegini-Polizzi surfaces (PP2 and PP4 surfaces) constructed in \cite{PP2, PP3}.

 \begin{exam}[CH surfaces]
 	Given  a  general polarized abelian surface $(\hat{A},L)$ of type $(1,2)$. Assume that $L$ is symmetric.  Then the Fourier-Mukai transform $\cF: =\widehat{L}$ is a vector bundle of rank $2$ on $A=\hat{\hat{A}}$. Let $\phi_{L} : \hat{A}\rightarrow A$ be the the isogeny induced by $L$. By~\cite{M} and \cite{CH} (see also \cite{PP1}), there is a $1$-dimensional family of triple covers $f: X\rightarrow A$ with Tschirnhausen bundle $~\cF$, corresponding to the $2$-dimensional vector space $H^0(A,S^3\cF^{\vee}\otimes\bigwedge^2\cF)$. It is known that a general $X$ has $1$ singular point of type $\frac{1}{3}(1, 1)$.
 Let $\mu:S\to X$   be the resolutions of singularities. Then $S$ is a minimal surface of general type with $K_S^2=5,~ p_g=q=2$, which is called  a Chen-Hacon surface.

 In \cite{PP1}, Penegini and Polizzi extended Chen and Hacon's construction and described in details the moduli space which is connected and irreducible, whose very general point corresponds to Chen and Hacon's construction.
 \end{exam}

 \begin{exam}[PP2 surfaces]
Penegini and Polizzi classified minimal surfaces $S$ of general type with $K_S^2=6$, $p_g(S)=q(S)=2$, and $\deg a_S=2$ in \cite{PP2}.
 For these surfaces, the Albanese variety  is  a polarized abelian surfaces $(A, L)$ of type $(1, 2)$ and the Albanese morphism $a_S: S\rightarrow A$ is branched over a divisor $D\equiv 2L$ whose unique non-negligible singularity is an ordinary quadruple point. Penegini and Polizzi described the moduli space of surfaces constructed in this way, which has $3$ connected irreducible components.
 \end{exam}
 \begin{exam}[PP4 surfaces]
 	The idea of the construction of PP4 surfaces is inspired the construction of  Chen-Hacon surfaces.
 	
 	Start to considering a $(1,3)$-polarized abelian surface $(\hat{A},L)$. Similarly, the Fourier-Mukai transform $\cM:=\widehat{L}^{\vee}$ is a vector bundle of rank 3 and $\phi_{L}^{*}\widehat{L}^{\vee}\cong  H^0(L)^\vee\otimes L$. The idea is to consider quadruple covers of $\hat{A}$ with Tschirnhausen bundle $~\phi_{L}^{*}\cM^\vee$ and identify the covers that descend to quadruple covers  of $ A$ with Tschirnhausen bundle $~\cM^\vee$.
 	
 	Using the data of constructing  quadruple cover  (\cite{HM}), Penegini and Polizzi managed to construct quadruple covers $\hat{a}:\hat{S}\to\hat{A}$ over general $(1,3)$-polarized abelian surface such that $\hat{a}$ descends to a cover $a_S:S\rightarrow A$,  where $S$ is a surface of general type with $K_S^2=6,~p_g=q=2$ and $a_S$ is the Albanese morphism of $S$. The degree of $a_S$ is $4$.
 \end{exam}
 \begin{exam}[AC3 surfaces]
 	Let $(A,L)$ be a polarized abelian surface of type $(1,3)$. We recall notations from the last subsection. Let $\phi_{L}:A\to \hat{A}$ be the isogeny induced by $L$ and its kernel $K_L\cong (\mathbb Z_3)^2$. Denote by $\sigma$ and $\tau$ a generator of $A_L$ and $B_L$ respectively.  Put $V:=H^0(A,L)$, then $V$ is isomorphic to the Schr\"odinger representation of $\mathcal K_L$. We choose a basis $x_0, x_1, x_2$ of
 $V$ such that $\sigma$ acts on $V$ via the cyclic permutation $$x_0\rightarrow x_1\rightarrow x_2\rightarrow x_0$$ and $\tau$ acts on $V$ via $$x_i\rightarrow \xi^ix_i,$$ for $i=0,1,2$, where $\xi$ is a primitive $3$-th root. We also consider the dual representation $V^{\vee}$ and let $y_0, y_1, y_2$ be the corresponding dual vectors. Consider the surfaces $\hat{S}\subset \mathbb P^2\times A$ in the family
 	\begin{equation*}
 		\hat{S}:=\{(y,z)\in \mathbb P^2\times A: \sum_{j=0}^2y_jx_j(z)=0,y_0^2y_1+y_1^2y_2+y_2^2y_0=0\}.
 	\end{equation*}
 	It is clear  that indeed $\hat{S}\subset C\times A$, where $C$ is the cubic curve
 	\begin{equation*}
 		C:=\{y_0^2y_1+y_1^2y_2+y_2^2y_0=0\}\subset \mathbb P^2.
 	\end{equation*}
 Note that $K_L$ acts faithfully on $C$ via the above Schr\"odinger representation and $K_L$ acts on $A$ via the translation. Hence we have a faithful diagonal action of $K_L$ on $\hat{S}$.
 	One verify easily that $C/K_L\simeq \mathbb P^1$ and the quotient surface $S:=\hat{S}/K_L$, called an $AC3$ surface, has $p_g(S)=q(S)=2, K_S^2=6$ and the Albanese morphism of $S$ has degree $3$.
 \end{exam}

 \section{The Chen-Jiang decomposition of $a_{S*}\omega_S$}
 For surfaces $S$ with $p_g=q=2$ and $a_S$ generically finite, we  now consider the Chen-Jiang decomposition of $a_{S*}\omega_S$. Since $a_S$ is generically finite and surjective, we know that $\cO_A$ is a direct summand of $a_{S*}\omega_S$ and since $\chi(a_{S*}\omega_S)=\chi(\omega_S)>0$, the M-regular direct summand of $a_{*}\omega_S$ has holomorphic Euler characteristic $1$. We also know that a torsion-free M-regular sheaf on an elliptic curve is an ample vector bundle. Thus, by the Chen-Jiang decomposition, we have
 $$a_{S*}\omega_S=\cO_A\oplus \cM  \oplus_{i}( p_i^*M_i\otimes Q_i),$$
 where $\cM$ is a M-regular sheaf on $A$ and $\chi(\cM)=1$, $p_i: A\rightarrow E_i$ is a fibration over an elliptic curve, $M_i$ is an ample vector bundle on $E_i$, and $Q_i\in\Pic^0(A)$ is torsion for each $i$.

 We will say that the Chen-Jiang decomposition of $a_{S*}\omega_S$ is simple if $a_{S*}\omega_S=\cO_A\oplus \cM$ is the direct sum of $\cO_A$ with an M-regular sheaf $\cM$.

\begin{theo}Let $S$ be a smooth projective surface of general type such that  $p_g=q=2$ and $a_S$ generically finite. Then the Chen-Jiang decomposition of $a_{S*}\omega_S $ is simple, unless $K_S^2=8$, and $S$ is isogenous to a product of a smooth projective curve of genus $3$ and a smooth projective curve of genus $3$ or $4$ or $5$.
\end{theo}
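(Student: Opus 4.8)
The plan is to argue by contradiction. Assume the decomposition is not simple and fix one summand $\cG=p^{*}M\otimes Q$, where $p\colon A\to E$ is a quotient with connected fibres onto an elliptic curve $E$ (so $F:=\ker p$ is an elliptic curve) and $M$ is ample; the goal is to force $S$ to be isogenous to a product of curves with $K_{S}^{2}=8$. First I would read off numerical constraints from cohomology. Since $a_{S}$ is generically finite, $R^{j}a_{S*}\omega_{S}=0$ for $j>0$, so by Leray $H^{j}(A,a_{S*}\omega_{S})\cong H^{j}(S,\omega_{S})$, of dimensions $p_{g}=2$, $q=2$, $1$; subtracting off $\cO_{A}$ (dimensions $1,2,1$), the remaining summands have total $h^{0}=1$ and $h^{1}=h^{2}=0$. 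If $Q|_{F}\cong\cO_{F}$ then $Q$ descends through $p$, and the Leray sequence for $p$ (with $R^{1}p_{*}\cO_{A}=\cO_{E}$) gives $h^{1}(A,\cG)>0$, a contradiction; hence $Q_{i}|_{F_{i}}\not\cong\cO_{F_{i}}$ for every summand, each $p_{i}^{*}M_{i}\otimes Q_{i}$ has vanishing cohomology, and $h^{0}(\cM)=1$, $h^{i}(\cM)=0$ for $i>0$.

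Next I would promote the summand to a fibration. Composing $a_{S}$ with $p$ and taking the Stein factorization $S\to B\to E$, the curve $B$ dominates $E$ so $g(B)\ge1$; and $g(B)=2$ is impossible, since it would make $\Alb(S)\to\Alb(B)$ an isogeny and $a_{S}$ factor through $B$, contradicting generic finiteness. Thus $B$ is elliptic, and after replacing $E$ by $B$ -- under which $\cG$ remains a summand of the same shape with nontrivial fibre restriction -- we obtain a fibration $h\colon S\to E$ with connected fibres, general fibre $\Gamma$ of genus $g_{0}$, and $\omega_{S}|_{\Gamma}\cong\omega_{\Gamma}$. Since $\omega_{E}\cong\cO_{E}$, $h_{*}\omega_{S/E}=p_{*}(a_{S*}\omega_{S})$; comparing $R^{1}p_{*}$ of the decomposition with $R^{1}h_{*}\omega_{S/E}\cong\cO_{E}$ yields $R^{1}p_{*}\cM=0$, hence $\deg(p_{*}\cM)=\chi(\cM)=1$, so $h_{*}\omega_{S/E}$ is a nef bundle on $E$ of rank $g_{0}$ and degree $\chi(\cO_{S})=1$ containing $\cO_{E}$ as a direct summand; as the degree is nonzero, necessarily $g_{0}\ge2$. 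Choosing a complementary elliptic quotient $A\to F$ and repeating the argument gives a second fibration $h'\colon S\to F'$ onto an elliptic curve with fibres $\Gamma'$ of genus $g_{0}'\ge2$, transverse to $h$; applying the Hodge index theorem on $\langle\Gamma,\Gamma'\rangle$ then produces
\[
K_{S}^{2}\ \ge\ \frac{2(K_{S}\cdot\Gamma)(K_{S}\cdot\Gamma')}{\Gamma\cdot\Gamma'}=\frac{8(g_{0}-1)(g_{0}'-1)}{\Gamma\cdot\Gamma'}.
\]

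The hard part -- and the main obstacle -- is to turn this into an equality with $\Gamma\cdot\Gamma'=(g_{0}-1)(g_{0}'-1)$, equivalently to show that $S$ is isogenous to a product of two curves (which in particular bounds $g_{0}$, $g_{0}'$ and $\deg a_{S}$). The idea is to exploit the extremal positivity $\deg h_{*}\omega_{S/E}=\chi(\cO_{S})=1$: the unitary (flat) part of the Hodge bundle of $h$ then has corank at most $1$, so $h$ is ``as close to isotrivial as possible'', and similarly for $h'$; since $(h,h')$ recovers $a_{S}$ up to isogeny, after a finite \'etale base change trivializing the flat parts the two fibrations should become essentially isotrivial, presenting $S$ birationally as a quotient of a product and hence $S\simeq(C_{1}\times C_{2})/G$ with $G$ acting freely. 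Carrying this out will require the classification of fibrations over an elliptic curve with $\chi(\cO_{S})=1$, Castelnuovo-type control of the covers $\Gamma\to F$, and the characterization of minimal surfaces with $K_{S}^{2}=8\chi(\cO_{S})$ and $q=2$ as those isogenous to a product.

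Once a product structure $S=(C_{1}\times C_{2})/G$ is available, the remainder is bookkeeping. Generic finiteness of $a_{S}$ forces both $C_{i}/G$ to be elliptic curves (otherwise $a_{S}$ factors through a pencil), so $\Alb(S)\cong(C_{1}/G)\times(C_{2}/G)$ and $1=\chi(\cO_{S})=(g(C_{1})-1)(g(C_{2})-1)/|G|$, whence $|G|=(g(C_{1})-1)(g(C_{2})-1)$ and $K_{S}^{2}=8\chi(\cO_{S})=8$ automatically. Finally, since each $C_{i}\to C_{i}/G$ is ramified ($g(C_{i})\ge2$) while its ramification elements must act freely on the other factor, a finite case analysis over the possible $G$ leaves only $\{g(C_{1}),g(C_{2})\}\in\{\{3,3\},\{3,4\},\{3,5\}\}$, which is the asserted list.
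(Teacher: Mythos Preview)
Your setup is close to the paper's: you correctly observe $Q\notin p^{*}\Pic^{0}(E)$, and the paper also begins there. But from that point your route diverges and runs into real trouble. First, the Hodge index inequality you write has the wrong sign: if $\Gamma^{2}=(\Gamma')^{2}=0$ and $\Gamma\cdot\Gamma'=n>0$, then projecting $K_{S}$ onto the hyperbolic plane $\langle\Gamma,\Gamma'\rangle$ gives
\[
K_{S}^{2}\;\le\;\frac{2(K_{S}\cdot\Gamma)(K_{S}\cdot\Gamma')}{\Gamma\cdot\Gamma'}=\frac{8(g_{0}-1)(g_{0}'-1)}{\Gamma\cdot\Gamma'},
\]
not $\ge$; so this step yields no lower bound and does not, by itself, force $K_{S}^{2}=8$. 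Second, and more seriously, your ``hard part'' is precisely where the content lies, and the sketch you give (corank-one flat part of the Hodge bundle over an elliptic base $\Rightarrow$ near-isotriviality $\Rightarrow$ product after \'etale base change) is not a proof: over an elliptic curve the constraint $\deg h_{*}\omega_{S/E}=1$ is far from forcing isotriviality, and you have no mechanism to rule out honest non-isotrivial fibrations with $\chi=1$.

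The paper bypasses both problems by a single move you do not make: instead of staying on $S$, it takes the cyclic \'etale cover $\tilde S\to S$ of order $k$ determined by the torsion class $Q$ and Stein-factorizes $\tilde S\to C\to E$, obtaining a fibration over a curve $C$ with $g(C)\ge k\ge 2$. Over this higher-genus base, Riemann--Roch for $h_{*}\omega_{\tilde S}$ gives the sharp identity
\[
d+(r-1)\bigl(g(C)-1\bigr)=k\le g(C),\qquad r=g(F),\ d=\deg h_{*}\omega_{\tilde S/C},
\]
which leaves only three numerical possibilities. The case $d=0$ is exactly local triviality (hence $K_{S}^{2}=8$), and the residual case $d=1$, $r=2$ is eliminated using Xiao's explicit classification of genus-$2$ fibrations with fixed elliptic part---this is the substitute for your vague ``classification of fibrations over an elliptic curve with $\chi=1$''. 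The final list of genera then comes by quoting Penegini's classification, not by an ad hoc group-theoretic case analysis. In short, the missing idea in your proposal is passing to the \'etale cover so that the base has genus $\ge 2$; without it the numerics are too loose to close the argument.
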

\begin{proof}

Assume that the Chen-Jiang decomposition of $a_{S*}\omega_S$ contains a direct summand which is of the form $p^*M\otimes Q$, where $p: A\rightarrow E$ is a surjective morphism to an elliptic curve $E$ with connected fibers and $M$ is an ample vector bundle on $E$, and $Q\in \Pic^0(A)$. In particular, $\deg a_S:=\kappaup >2$.

We first remark that $Q\in \Pic^0(A)\setminus p^*\Pic^0(E)$. Indeed, if $Q=p^*Q'\in p^*\Pic^0(E)$, $h^0(A,p^*M\otimes Q )\geq 1$ and then $$2=p_g(S)=h^0(A, a_{S*}\omega_S)\geq h^0(A, \cO_A)+h^0(A, \cM)+h^0(A,p^*M\otimes Q)\geq 3,$$ which is a contradiction.

Note that $Q$ is a torsion line bundle and we denote by $k$ the order of the image of $Q$ in the quotient abelian variety $ \Pic^0(A)/p^*\Pic^0(E)$. We claim that $M$ is a line bundle and moreover there exist ample line bundles $M_i$ on $E$ where $M_1=M$ such that $$\cO_A\oplus \bigoplus_{i=1}^{k-1}(p^*M_i\otimes Q^{\otimes i})$$ is a direct summand of $a_{S*}\omega_S$. This is essentially a special case of \cite[the Claim in Page 2991]{JLT}. We simply state it using the language of Chen-Jiang decomposition.

After replacing $Q$ by a line bundle of order $k$ in $Q\otimes p^*\Pic^0(E)$, we may simply assume that $Q$ is of order $k$ and denote by $\pi: \tilde{S}\rightarrow S$ the cyclic \'etale cover induced by $Q$. By considering the Stein factorization of $p\circ a_S\circ \pi$ and by  \cite[the Claim in Page 2991]{JLT}, we have the following commutative diagram:
\begin{eqnarray*}
\xymatrix{
\tilde{S}\ar[d]^h\ar[r]^{\pi} & S\ar[d]^{p\circ a_S}\\
C\ar[r]^q & E,}
\end{eqnarray*}
where $q: C\rightarrow E$ is a cyclic $\mathbb Z_k$-cover such that $$q_*\omega_C=q_*R^1h_*\omega_{\tilde{S}}=R^1(p\circ a_S)_*\pi_*\omega_{\tilde{S}}=\cO_E\oplus\bigoplus_{i=1}^{k-1}M_i.$$

Note that $g(C)=1+\sum_{i=1}^{k-1}\deg M_i\geq k$. On the other hand, $$\chi(h_*\omega_{\tilde{S}})=\chi(\omega_{\tilde{S}})+\chi(R^1h_*\omega_{\tilde{S}})=k\chi(\omega_S)+\chi(\omega_C)=k+(g(C)-1).$$
We denote by $r$ and $d$ respectively the rank and the degree of $h_*\omega_{\tilde{S}/C}$. Note that $r=g(F)$, where $F$ is a general fiber of $h$ or $p\circ a_S$. We also know that $h_*\omega_{\tilde{S}/C}$ is nef and hence $d\geq 0$ (see \cite{V}).
By Riemann-Roch, we then have
\begin{eqnarray*}
\chi(h_*\omega_{\tilde{S}})=r(g(C)-1)+d=k+g(C)-1.
\end{eqnarray*}
Hence $d+(r-1)(g(C)-1)=k\leq g(C)$. Thus we have the following possibilities
\begin{itemize}\item $d=1$, $r=2$, and $g(C)=k$;
\item $d=0$, $r=2$ and $g(C)=k+1$;
\item $d=0$, $r=3$, and $g(C)=k=2$.
\end{itemize}

 We first remark that if $d=0$, by \cite[Theorem 17.3]{BHPV}, $h$ is locally trivial and hence $h$ is smooth and all fibers of $h$ are isomorphic to $F$. In this case, $K_S^2=8$. It is a bit more tricky to deal with the case that $d=1$.

If $d=1$, either $h$ is isotrivial but not locally trivial or $h$ has non-constant moduli. If $h$ has non-constant moduli, since $g(F)=r=2$, we apply  results of Gang Xiao in \cite{X}. Let $E'$ be the kernel of $p$. Then $E'$ can be seen as the fixed part of the  Jacobian of fibers of $h$. Moreover, the degree of $F$ over $E'$  equal to   $\deg a_{S}=\kappaup$ is $>2$. Thus we can say that $h: \tilde{S}\rightarrow C$ is a fibration of genus $2$ curve of type $(E', \kappaup)$ in the terminology of Xiao (\cite[Chapter 3]{X}). Thus by \cite[Th\'eor\`eme 3.10 and Corollaire in Page 47]{X}, there exists a semistable family $f: S(E', \kappaup)\rightarrow X(\kappaup)$ from a minimal surface to a smooth projective curve and a surjective morphism $\varphi: C\rightarrow X(\kappaup)$ such that $h$ is the desingularization of the pull-back of $f$ and moreover both $f$ and $h$ are semistable. By \cite[Lemma 3.1]{V}, $$h_*\omega_{\tilde{S}/C}=\varphi^*f_*\omega_{S(E', \kappaup)/X(\kappaup)}.$$ The numerical invariants of $S(E', \kappaup)$ and $X(\kappaup)$ were also computed in \cite[Th\'eor\`eme 3.13]{X}. Since $$\deg f_*\omega_{S(E', \kappaup)/X(\kappaup)}=\chi(\omega_{S(E', \kappaup)})-g(X(\kappaup))+1,$$ we see that $\deg f_*\omega_{S(E', \kappaup)/X(\kappaup)}$ is $\geq 2$ when $\kappaup\geq 4$ and is equal to $1$ when $\kappaup=3$ (see \cite[Page 48 and 52]{X}). When $\kappaup=3$, $X(3)=\mathbb P^1$. Thus $\deg h_*\omega_{\tilde{S}/C}\geq 2$ in any case and we get a contradiction.

When $h$ is locally trivial or isotrivial and $g(F)=r=2$, $S$ is birational to a diagonal quotient $(F\times C')/G$, where $G$ acts faithfully on both $F$ and $C'$ and both $F/G$ and $C'/G$ are elliptic curves since $q(S)=2$ and $a_S$ is generically finite. It is elementary that a Galois cover from a genus $2$ curve to an elliptic curve has to be a double cover. Thus $\deg a_S=2$ and we have a contradiction.

The only rest of case is that $d=0$, $r=3$ and $g(C)=k=2$. The fibration $h$ is locally trivial and thus $K_{S}^2=8$. Thus $\tilde{S}$ is isogenous to $F$ with a cover $C'$ of $C$. We can check Penegini's list \cite[Theorem 1.1]{Pm} that $g(C')=3$, or $4$, or $5$.
 \end{proof}

 \begin{exam}
 Let $a_i: C_i\rightarrow E_i$ be two $(\mathbb Z_2\times \mathbb Z_2)$-cover of elliptic curves such that $a_{i*}\omega_{C_i}=\cO_{E_i}\oplus L_i\oplus P_i\oplus (L_i\otimes P_i)$  for $i=1, 2$, where $L_i$ and $P_i$ are respectively degree $1$ and torsion line bundles. Note that $a_i$ is nothing but a composition of a ramified double cover with an \'etale double cover.

 We may also assume that $\mathrm{Gal}(C_1/E_1)=\mathrm{Gal}(C_2/E_2)=G=\langle \sigma\rangle \times \langle \tau\rangle$ and that  $L_1$ (resp. $P_2$) is the $\sigma$-invariant in $a_{1*}\omega_{C_1}$ (resp. $a_{2*}\omega_{C_2}$ )and $P_1$ (resp. $L_2$) is the $\tau$-invariant part in $a_{1*}\omega_{C_1}$ (resp. $a_{2*}\omega_{C_2})$. Let $S$ be the diagonal quotient $(C_1\times C_2)/G$ and $a: S\rightarrow E_1\times E_2$ be the natural morphism (which is also the Albanese morphism of $S$), we have $$a_{S*}\omega_S=\cO_{E_1\times E_2}\oplus (L_1\boxtimes P_2) \oplus (P_1\boxtimes L_2) \oplus ((L_1\otimes P_1)\boxtimes (L_2\otimes P_2)).$$
 \end{exam}

\section{Cohomological rank functions of surfaces of general type with $p_g=q=2$}

We assume in this section that $a_{S*}\omega_S=\cO_A\oplus \cM$ and $\cM$ is M-regular. Chen and Hacon studied the Fourier-Mukai transform of $\cM$ in \cite[Section 3]{CH}. The following paragraph is essentially due to them.

Since $\cM$ is M-regular, we know that $\Phi_{\cP}(\cM^{\vee})[2]=L\otimes \cI_Z$ is a rank $1$ torsion-free sheaf, where $L$ is a line bundle on $\Pic^0(A)$ and $\cI_Z$ is the ideal sheaf of a finite subscheme $Z$ of $\Pic^0(A)$. We then consider the short exact sequence $$0\rightarrow L\otimes \cI_Z\rightarrow L\rightarrow L|_Z\rightarrow 0$$ on $\Pic^0(A)$. We have $$\mathcal{E}xt^2(L|_Z, \cO_{\hat{A}})\simeq \mathcal{E}xt^1(L\otimes\cI_Z, \cO_{\hat{A}})\simeq R^1\Phi_{\cP}(\cM),$$ where we get the second isomorphism by  \cite[Corollary 2.6]{PaPo3}. Thus, it is easy to see that $\mathrm{Supp}(Z)=\mathrm{Supp} (R^1\Phi_{\cP}(\cM))= V^1(\cM)= V^1(a_{S*}\omega_S)\setminus\{\cO_A\}$ \footnote{Here by abuse of notation, we identify a numerically trivial line bundle on $A$   to the corresponding point of $\hat{A}$.}, where the second equality holds because $V^2(\cM)=\emptyset$.
Moreover, by (\ref{FM1}) and (\ref{FM2}), we have $$\Psi_{\cP}((L\otimes \cI_Z)^{\vee})[2]=(-1_A)^*\Psi_{\cP}(L\otimes \cI_Z)^{\vee}=\cM.$$ It is also clear that $$\Psi_{\cP}((L|_Z)^{\vee})[2]=R^2\Psi_{\cP}((L|_Z)^{\vee})$$ is a vector bundle, which is a successive extension of numerically trivial line bundles $P$ which belongs to $V^1(\cM)$. Hence  $$\Psi_{\cP}(L^{\vee})[2]=R^2\Psi_{\cP}(L^{\vee})$$ and $L$ is an ample line bundle.  As before, $$R^2\Psi_{\cP}((L|_Z)^{\vee})=(-1_A)^*\widehat{L|_Z}^{\vee}$$ and $$R^2\Psi_{\cP}(L^{\vee})=(-1_A)^*\widehat{L}^{\vee}$$
We then get a new exact sequence
 \begin{eqnarray}\label{exact}
0\rightarrow(-1_A)^*\widehat{L|_Z}^{\vee}\rightarrow (-1_A)^* \widehat{L}^{\vee}\rightarrow \cM\rightarrow 0
\end{eqnarray}
 on $A$, after taking the inverse Fourier-Mukai transform and taking duality.

We observe that $\rank \cM=\deg a_S-1\leq h^0(L)$ and equality holds iff $Z=\emptyset$.

It turns out that it is easier to work on $\Pic^0(A)=\hat{A}$. Let $\phi_L: \hat{A}\rightarrow \hat{\hat{A}}=A$ be the isogeny induced by $L$ and let $\hat{S}:=S\times_{A}\hat{A}$. We consider the Cartesian:
\begin{eqnarray*}
\xymatrix{
\hat{S}\ar[d]\ar[r]^{\hat{a}}& \hat{A}\ar[d]^{\phi_L}\\
S\ar[r]^{a_S} & A.}
\end{eqnarray*}
It is clear that $\hat{a}_*\omega_{\hat{S}}=\cO_{\hat{A}}\oplus \phi_L^*\cM$.
Since $\phi_L^*\widehat{L}\simeq H^0(L)\otimes L^{-1}$, $$\phi_L^*((-1_A)^*\widehat{L}^{\vee})\simeq H^0(L)^{\vee}\otimes (-1_{\hat{A}})^*L.$$ Thus we have a short exact sequence on $\hat{A}$:
\begin{eqnarray}\label{exact1}0\rightarrow \phi_L^*((-1_A)^*\widehat{L|_Z}^{\vee})\rightarrow H^0(L)^{\vee}\otimes (-1_{\hat{A}})^*L\rightarrow \phi_L^*\cM\rightarrow 0.\end{eqnarray}
We finally remark that $\phi_L^*((-1_A)^*\widehat{L|_Z}^{\vee})$ is again a successive extension of numerically trivial line bundles on $\hat{A}$ and $(-1_{\hat{A}})^*L$ is algebraically equivalent to $L$.
 Thus we are now able to compute the cohomological rank function of $\hat{a}_{*}\omega_{\hat{S}}$.

\begin{lemm}\label{crf} For $-1\leq t\leq 0$, we have
\begin{eqnarray*}&&h^0_{\hat{a}_{*}\omega_{\hat{S}}, L}(t)=h^0(L)^2(t+1)^2\\
&&h^1_{\hat{a}_{*}\omega_{\hat{S}}, L}(t)=(h^0(L)+1-\deg a_S)h^0(L)t^2\\
&&h^2_{\hat{a}_{*}\omega_{\hat{S}}, L}(t)=h^0(L)t^2.
\end{eqnarray*}
\end{lemm}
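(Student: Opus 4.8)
The plan is to compute each $h^i_{\hat a_*\omega_{\hat S}, L}(t)$ on the interval $[-1,0]$ by reducing everything to the exact sequence (\ref{exact1}) and the known behaviour of cohomological rank functions of an ample line bundle. Since $\hat a_*\omega_{\hat S} = \cO_{\hat A}\oplus \phi_L^*\cM$, and cohomological rank functions are additive over direct sums, it suffices to compute $h^i_{\cO_{\hat A}, L}$ and $h^i_{\phi_L^*\cM, L}$ separately. For the structure sheaf, $h^i_{\cO_{\hat A}, L}(t) = h^i_{L,L}(t)$ after the standard shift, so on $[-1,0]$ one has $h^0_{\cO_{\hat A},L}(t) = \chi(L^{\otimes(t+1)}) = h^0(L)(t+1)^2$ (using that $L$ is a polarization on a surface, so $\chi(L)=\tfrac12(L^2)=h^0(L)$, whence $\chi(L^{\otimes s}) = h^0(L)s^2$), and $h^1 = h^2 = 0$ there. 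The remaining and main work is to compute the three functions for $\phi_L^*\cM$.

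For $\phi_L^*\cM$ I would use (\ref{exact1}): the kernel term $\cK := \phi_L^*((-1_A)^*\widehat{L|_Z}^\vee)$ is a successive extension of numerically trivial line bundles, hence all its cohomological rank functions are computed as if it were a trivial bundle of rank $\rank\cK = \operatorname{length} Z = h^0(L) + 1 - \deg a_S$ — concretely $h^0_{\cK,L}(t) = (\rank\cK)\,h^0(L)(t+1)^2$ for $-1\le t\le 0$ and $h^i_{\cK,L}=0$ for $i>0$ on that interval, since numerically trivial twists of $L$ behave like $L$. The middle term $H^0(L)^\vee\otimes(-1_{\hat A})^*L$ is a direct sum of $h^0(L)$ copies of a line bundle algebraically equivalent to $L$, so its rank functions are $h^0(L)$ times those of $L$: on $[-1,0]$, $h^0 = h^0(L)\cdot h^0(L)(t+1)^2 = h^0(L)^2(t+1)^2$ and $h^1=h^2=0$. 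Twisting (\ref{exact1}) by $L^{\otimes t}\otimes Q$ (for general $Q$, after pulling back by a large multiplication map — this is exactly what the cohomological rank function encodes) and taking the long exact sequence, the vanishing of $h^1$ and $h^2$ of the middle term on $(-1,0)$ forces $h^1_{\phi_L^*\cM,L}(t) = h^0_{\cK,L}(t)\big/(\cdot)$-type identification: more precisely $h^2_{\phi_L^*\cM,L}(t)=0$, $h^1_{\phi_L^*\cM,L}(t)$ equals the cokernel of $H^0$ of the middle term mapping onto $H^0$ of $\phi_L^*\cM$... — here I need to be careful, and this is where the actual argument lives. The cleanest route: on the open interval $t\in(-1,0)$ the sheaf $\phi_L^*\cM\otimes L^{\otimes t}$ (suitably interpreted) has no $H^2$ because $\cM$ is M-regular hence IT$^0$-ish after the twist and the quotient of IT$^0$ sheaves story applies; then from the long exact sequence $h^1_{\phi_L^*\cM,L}(t) = h^0_{\cK,L}(t) - \big(h^0_{\text{middle}}(t) - h^0_{\phi_L^*\cM,L}(t)\big)$, and $h^0_{\phi_L^*\cM,L}(t) = 0$ for $-1<t\le 0$ because $\cM$ contributes nothing to $p_g$-type sections after a negative twist (indeed $h^0_{\cM,L}(t) = 0$ for $-1 < t \le 0$ follows from $\chi(\cM)=1$ being absorbed by $\cO_A$, or directly from M-regularity of $\cM$ and \cite[Theorem 5.2]{JP}). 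That gives $h^1_{\phi_L^*\cM,L}(t) = (\rank\cK)h^0(L)(t+1)^2 - h^0(L)^2(t+1)^2 + h^0(L)^2 t^2\cdot(\dots)$ — I will reconcile the polynomial so that it matches $(h^0(L)+1-\deg a_S)h^0(L)t^2$; note $\rank\cK = h^0(L)+1-\deg a_S$, and the identity $(t+1)^2 - 1 = t^2 + 2t$ together with the Euler-characteristic bookkeeping $\chi(\phi_L^*\cM\otimes L^{\otimes t}) = \chi(\text{middle}) - \chi(\cK)$ pins everything down.

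The honest way to finish, avoiding sign and interval pitfalls, is to run it through Euler characteristics and then separately control $h^0$ and $h^2$. First, $\chi_{\hat a_*\omega_{\hat S},L}(t) := \sum_i (-1)^i h^i_{\hat a_*\omega_{\hat S},L}(t)$ is a single polynomial on all of $\R$, equal to $\chi(\hat a_*\omega_{\hat S}\otimes L^{\otimes t})$ computed by Riemann–Roch on the abelian surface $\hat A$: this is $\chi(\cO_{\hat A}\otimes L^{\otimes t}) + \chi(\phi_L^*\cM\otimes L^{\otimes t}) = h^0(L)t^2 + \big[\deg(\phi_L)\chi(\cM) + (\text{rank}\cdot\text{intersection term})\big]$, and using $\deg\phi_L = h^0(L)^2$, $\chi(\cM)=1$, $\rank\cM = \deg a_S - 1$, and $(\phi_L^*L)\equiv h^0(L)\cdot(\text{the principal class})$... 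I would compute this to get $\chi_{\hat a_*\omega_{\hat S},L}(t) = h^0(L)^2(t+1)^2 + (h^0(L)+1-\deg a_S)h^0(L)t^2 + h^0(L)t^2$, which already forces the stated answer provided I can show $h^1_{\hat a_*\omega_{\hat S},L}(t)$ vanishes only where it should and $h^2_{\hat a_*\omega_{\hat S},L}(t) = h^0(L)t^2$ exactly. The latter: $h^2_{\hat a_*\omega_{\hat S},L}(t) = h^2_{\cO_{\hat A},L}(t) + h^2_{\phi_L^*\cM,L}(t)$; the first summand is $h^0(L)t^2$ for $-1\le t\le 0$ (by Serre duality / the $h^0$ computation for $L$ on the negative side reflected), and $h^2_{\phi_L^*\cM,L}(t) = 0$ because $\cM$, being M-regular, has $\widehat{\cM^\vee}$ torsion-free so $R^2\Phi(\cM)$ has $0$-dimensional support, making the relevant $h^2$ vanish identically for $t > -1$. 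Once $h^0$ and $h^2$ are known, $h^1$ is forced by the Euler-characteristic identity.

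The step I expect to be the genuine obstacle is the precise computation of $h^0_{\hat a_*\omega_{\hat S},L}(t)$ — equivalently showing $h^0_{\phi_L^*\cM,L}(t)$ vanishes for $-1 < t \le 0$ while the $\cO_{\hat A}$-summand contributes exactly $h^0(L)^2(t+1)^2$, and getting the quadratic coefficient from $\chi(L^{\otimes s}) = h^0(L)s^2$ on $\hat A$ correctly normalized (the factor $h^0(L)$ versus $\tfrac12(L^2)$ must be tracked, and on $\hat A$ the relevant ample bundle is algebraically equivalent to $L$ with $h^0 = h^0(L)$). The vanishing $h^0_{\phi_L^*\cM,L}(t) = 0$ for negative $t$ should follow from the fact that $\phi_L^*\cM$ is a quotient of $H^0(L)^\vee\otimes(-1_{\hat A})^*L$ with kernel a successive extension of topologically trivial line bundles, combined with \cite[Theorem 5.2]{JP} applied to the M-regular (hence IT$^0$) sheaf; I would write this out carefully. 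Everything else is bookkeeping with the long exact sequence of (\ref{exact1}) and additivity of $h^i_{-,L}$ over the direct sum $\cO_{\hat A}\oplus\phi_L^*\cM$.
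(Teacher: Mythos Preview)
Your overall strategy --- split $\hat a_*\omega_{\hat S}=\cO_{\hat A}\oplus\phi_L^*\cM$, compute the pieces of the exact sequence (\ref{exact1}) separately, and read everything off the long exact sequence --- is exactly the paper's. But there is a systematic sign-of-the-twist error that propagates through the whole computation and makes your $h^0$ and $h^1$ wrong.

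For $t\in(-1,0)$ the twist $L^{\otimes t}$ is \emph{anti}-ample on the abelian surface $\hat A$. Hence for $\cO_{\hat A}$, and more generally for the sheaf $\cW:=\phi_L^*((-1_A)^*\widehat{L|_Z}^{\vee})$ which is a successive extension of numerically trivial line bundles, the cohomology of the twist lives entirely in degree~$2$:
\[
h^0_{\cO_{\hat A},L}(t)=h^1_{\cO_{\hat A},L}(t)=0,\qquad h^2_{\cO_{\hat A},L}(t)=h^0(L)\,t^2,
\]
and likewise $h^0_{\cW,L}=h^1_{\cW,L}=0$, $h^2_{\cW,L}(t)=(h^0(L)+1-\deg a_S)\,h^0(L)\,t^2$. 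Your assertion that $h^0_{\cO_{\hat A},L}(t)=h^0(L)(t+1)^2$ and $h^0_{\cW,L}(t)=(\rank\cW)\,h^0(L)(t+1)^2$ with higher cohomology vanishing is the opposite of what happens; there is no ``shift'' making $h^i_{\cO_{\hat A},L}(t)$ equal to $h^i_{L,L}(t)$.

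Once this is corrected, the long exact sequence of (\ref{exact1}) gives the three statements immediately: since $h^0_{\cW,L}=h^1_{\cW,L}=0$ and $h^1,h^2$ of the middle term vanish, one reads off
\[
h^0_{\phi_L^*\cM,L}(t)=h^0(L)^2(t+1)^2,\qquad h^1_{\phi_L^*\cM,L}(t)=h^2_{\cW,L}(t),\qquad h^2_{\phi_L^*\cM,L}(t)=0.
\]
In particular your claim that $h^0_{\phi_L^*\cM,L}(t)=0$ on $(-1,0)$ is false; M-regularity does not force this, and indeed the whole $h^0$ contribution in the lemma comes from $\phi_L^*\cM$, not from $\cO_{\hat A}$. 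Adding the $\cO_{\hat A}$ summand then contributes only to $h^2$, giving the stated $h^0(L)t^2$. Your Euler-characteristic bookkeeping would have revealed the inconsistency (your proposed pieces sum to $h^0(L)(t+1)^2$, not $h^0(L)^2(t+1)^2$), but the fix is the index-theorem observation above, not any reconciliation of polynomials.
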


\begin{proof}
We denote by $\cW=\phi_L^*((-1_A)^*\widehat{L|_Z}^{\vee})$, whose rank is equal to the length of $Z$ and is also equal to $h^0(L)+1-\deg a_S$.

It suffices to note that when $-1\ll t<0$,
\begin{eqnarray*}
&&h^i_{\cW, L}(t)\\&=&(h^0(L)+1-\deg a_S)h^i_{\cO_{\hat{A}}, L}(t)=\begin{cases}
0, &\text{if $i=0$\; or\; $1$},\\
(h^0(L)+1-\deg a_S)h^0(L)t^2, &\text{if $i=2$},
\end{cases}
\end{eqnarray*}
and
\begin{equation*}
h^i_{(-1_{\hat{A}})^*L, L}(t)=\begin{cases}
h^0(L)(1+t)^2, &\text{if $i=0$},\\
0, &\text{if $i=1$\; or\; $2$}.
\end{cases}
\end{equation*}
\end{proof}
It is also clear that the Hilbert-Poincar\'e polynomial $$\chi(\hat{S}, K_{\hat{S}}+t\hat{a}^*L)=\chi(\omega_{\hat{S}})+\frac{1}{2}(K_{\hat{S}}\cdot \hat{a}^*L)t+\frac{1}{2}(\hat{a}^*L)^2t^2$$ is nothing but $$h^0_{\hat{a}_{*}\omega_{\hat{S}}, L}(t)-h^1_{\hat{a}_{*}\omega_{\hat{S}}, L}(t)+h^2_{\hat{a}_{*}\omega_{\hat{S}}, L}(t).$$ Comparing the coefficient of $t$, we get an equality.
\begin{lemm}\label{crf1} $(K_{\hat{S}}\cdot \hat{a}^*L)=4h^0(L)^2$.
\end{lemm}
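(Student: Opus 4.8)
The plan is to compare two expressions for the degree-two polynomial
\[
P(t):=\chi(\hat{S},K_{\hat{S}}+t\hat{a}^*L)=\chi(\omega_{\hat{S}})+\tfrac12(K_{\hat{S}}\cdot\hat{a}^*L)\,t+\tfrac12(\hat{a}^*L)^2\,t^2
\]
and to read off the linear coefficient. By Lemma~\ref{crf}, on the interval $[-1,0]$ the alternating sum $\sum_i(-1)^ih^i_{\hat{a}_*\omega_{\hat{S}},L}(t)$ is the polynomial
\[
h^0(L)^2(t+1)^2-\bigl(h^0(L)+1-\deg a_S\bigr)h^0(L)\,t^2+h^0(L)\,t^2,
\]
whose coefficient of $t$ equals $2h^0(L)^2$. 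Granting that $P(t)=\sum_i(-1)^ih^i_{\hat{a}_*\omega_{\hat{S}},L}(t)$ as polynomials, equating the coefficients of $t$ yields $\tfrac12(K_{\hat{S}}\cdot\hat{a}^*L)=2h^0(L)^2$, which is the assertion.

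The identity $P(t)=\sum_i(-1)^ih^i_{\hat{a}_*\omega_{\hat{S}},L}(t)$ is precisely the observation recorded in the paragraph just before the statement, so in the write-up it can be invoked directly; for completeness I would recall the reason. Fix $M$ sufficiently divisible with $M^2t\in\Z$, and form the Cartesian square over the multiplication map $\pi_M\colon\hat{A}\to\hat{A}$, giving $\hat{S}_M:=\hat{S}\times_{\hat{A}}\hat{A}$ with finite \'etale projection $\mu_M\colon\hat{S}_M\to\hat{S}$ of degree $M^4$ and $\hat{a}_M\colon\hat{S}_M\to\hat{A}$. Base change gives $\pi_M^*\hat{a}_*\omega_{\hat{S}}\cong\hat{a}_{M*}\omega_{\hat{S}_M}$; since $\mu_M$ is \'etale, $\mu_M^*K_{\hat{S}}=K_{\hat{S}_M}$, and $\mu_M^*\hat{a}^*L=\hat{a}_M^*\pi_M^*L$ is algebraically equivalent to $\hat{a}_M^*L^{\otimes M^2}$. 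Using that $\hat{a}_M$ is finite together with the projection formula,
\begin{align*}
\sum_i(-1)^ih^i\bigl(\hat{A},\pi_M^*\hat{a}_*\omega_{\hat{S}}\otimes L^{\otimes M^2t}\otimes Q\bigr)
&=\chi\bigl(\hat{S}_M,K_{\hat{S}_M}+M^2t\,\hat{a}_M^*L\bigr)\\
&=M^4\,\chi\bigl(\hat{S},K_{\hat{S}}+t\hat{a}^*L\bigr),
\end{align*}
the last step because $\mu_M$ is finite \'etale of degree $M^4$. Dividing by $M^{2\dim\hat{A}}=M^4$ identifies the two sides at all rational $t\in[-1,0]$, hence as polynomials.

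There is essentially no obstacle here: all the substance is in Lemma~\ref{crf}, and the present statement is the bookkeeping step of matching Riemann--Roch against the cohomological rank functions. As a consistency check, the constant term of the same identity gives $\chi(\omega_{\hat{S}})=h^0(L)^2$, compatible with $\deg\phi_L=h^0(L)^2$ and $\chi(\omega_S)=1$, while the quadratic term gives $(\hat{a}^*L)^2=2h^0(L)\deg a_S$, compatible with $(\hat{a}^*L)^2=\deg a_S\cdot L^2=2h^0(L)\deg a_S$ on the abelian surface $\hat{A}$; both of these hold, which rules out a sign or normalization slip.
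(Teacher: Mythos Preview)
Your proof is correct and follows exactly the paper's approach: equate the Riemann--Roch polynomial $\chi(\hat{S},K_{\hat{S}}+t\hat{a}^*L)$ with the alternating sum of the cohomological rank functions from Lemma~\ref{crf} and read off the linear coefficient. One small slip in your optional justification: $\hat{a}_M$ is only generically finite, not finite, so the identification $\chi(\hat{S}_M,\omega_{\hat{S}_M}\otimes\hat{a}_M^*L^{M^2t})=\chi(\hat{A},\hat{a}_{M*}\omega_{\hat{S}_M}\otimes L^{M^2t})$ needs Grauert--Riemenschneider vanishing $R^j\hat{a}_{M*}\omega_{\hat{S}_M}=0$ for $j>0$ rather than finiteness.
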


We believe that Lemma \ref{crf1} is crucial in the classification of all surfaces with $p_g=q=2$.

\begin{coro}Let $S$ be a minimal surface with $p_g=q=2$ and $K_S^2=9$.  Then $\mathrm{length}(Z)\geq 2$ and in particular $V^1(\cM)\neq \emptyset$. Similarly, if $S$ be a minimal surface with $p_g=q=2$ and $K_S^2=8$,  then $V^1(\cM)\neq \emptyset$.
\end{coro}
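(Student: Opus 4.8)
The plan is to deduce the corollary from Lemma~\ref{crf1} by a Hodge index estimate on $\hat S$; throughout we keep the running hypothesis $a_{S*}\omega_S=\cO_A\oplus\cM$ of this section. The starting remark is the identity $\mathrm{length}(Z)=h^0(L)+1-\deg a_S$. Indeed, in the exact sequence~(\ref{exact}) the sheaf $(-1_A)^*\widehat L^{\vee}$ has rank $h^0(L)$ (the rank of $\widehat L=R^0\Phi_{\cP}(L)$ equals $\chi(L)=h^0(L)$) and $(-1_A)^*\widehat{L|_Z}^{\vee}$ has rank $\mathrm{length}(Z)$ (similarly $\chi(L|_Z)=\mathrm{length}(Z)$), so $\rank\cM=h^0(L)-\mathrm{length}(Z)$, while $\rank\cM=\deg a_S-1$.

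Next I would collect the intersection numbers on $\hat S$. Since $\phi_L\colon\hat A\to A$ is an isogeny of degree $h^0(L)^2$, the projection $\nu\colon\hat S\to S$ is \'etale of degree $h^0(L)^2$; hence $\hat S$ is a smooth minimal surface of general type as well, $K_{\hat S}=\nu^*K_S$ is nef and big, and $K_{\hat S}^2=h^0(L)^2 K_S^2$. Moreover $\hat a\colon\hat S\to\hat A$ has degree $\deg a_S$ and $L^2=2\chi(L)=2h^0(L)$, so $(\hat a^*L)^2=2h^0(L)\deg a_S$ (this also drops out of Lemma~\ref{crf} on comparing the $t^2$-coefficients). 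Finally, Lemma~\ref{crf1} provides the mixed number $K_{\hat S}\cdot\hat a^*L=4h^0(L)^2$.

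The one step calling for a little care is the connectedness of $\hat S$, which is what lets us invoke the Hodge index theorem on $\hat S$ directly: the cover $\nu$ is the pull-back along the Albanese map $a_S$ of the connected \'etale cover $\phi_L$, and since $a_{S*}\colon\pi_1(S)\to\pi_1(A)$ is surjective the pull-back stays connected. As $K_{\hat S}^2>0$, the Hodge index inequality $(K_{\hat S}\cdot\hat a^*L)^2\ge K_{\hat S}^2(\hat a^*L)^2$ holds; substituting the three numbers above gives $16h^0(L)^4\ge 2h^0(L)^3 K_S^2\deg a_S$, that is
\[
K_S^2\cdot\deg a_S\ \le\ 8\,h^0(L).
\]

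Finally I would substitute the two values of $K_S^2$. If $K_S^2=9$ this reads $9\deg a_S\le 8h^0(L)$; were $\mathrm{length}(Z)\le 1$ we would have $h^0(L)\le\deg a_S$ and hence $9\deg a_S\le 8\deg a_S$, impossible since $\deg a_S>0$. Therefore $\mathrm{length}(Z)=h^0(L)+1-\deg a_S\ge 2$, and in particular $Z\ne\emptyset$, so $V^1(\cM)=\mathrm{Supp}(Z)\ne\emptyset$. If $K_S^2=8$ the inequality gives $\deg a_S\le h^0(L)$, hence $\mathrm{length}(Z)=h^0(L)+1-\deg a_S\ge 1$ and again $V^1(\cM)\ne\emptyset$. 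Apart from the connectedness remark (one could instead argue on each connected component of $\hat S$ and sum), the argument is just bookkeeping with data already in hand, so I do not expect a serious obstacle once Lemma~\ref{crf1} is granted.
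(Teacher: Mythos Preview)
Your argument is correct and follows the same route as the paper: apply the Hodge index inequality on $\hat S$ to the pair $K_{\hat S}$, $\hat a^*L$, using $K_{\hat S}\cdot\hat a^*L=4h^0(L)^2$ from Lemma~\ref{crf1}, $K_{\hat S}^2=h^0(L)^2K_S^2$, and $(\hat a^*L)^2=2h^0(L)\deg a_S$. The paper phrases the conclusion for $K_S^2=9$ as the explicit bound $\mathrm{length}(Z)\ge 1+h^0(L)/9$, whereas you argue by contradiction from $\mathrm{length}(Z)\le 1$; these are equivalent, and your added remarks on the identity $\mathrm{length}(Z)=h^0(L)+1-\deg a_S$ and on the connectedness of $\hat S$ simply make explicit what the paper leaves implicit.
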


\begin{proof}  By Hodge index, we have $$(K_{\hat{S}}\cdot \hat{a}^*L)\geq \sqrt{K_{\hat{S}}^2\cdot (\hat{a}^*L)^2}.$$ If $K_S^2=9$, $4h^0(L)^2\geq \sqrt{9h^0(L)^2\cdot( 2(\deg\hat{a})h^0(L))}$. Since $\deg\hat{a}=1+h^0(L)-\mathrm{length}(Z)$, we have $\mathrm{length}(Z)\geq 1+\frac{h^0(L)}{8}$.

If $K_S^2=8$, we may assume that the Chen-Jiang decomposition of $a_{S*}\omega_S$ is simple, then we have $(K_{\hat{S}}\cdot \hat{a}^*L)\geq \sqrt{K_{\hat{S}}^2\cdot (\hat{a}^*L)^2}.$ Similarly, we have $\mathrm{length}(Z)\geq 1$.
\end{proof}

\begin{lemm}\label{2-crf}For $-1\ll t\leq 0$, $$h^0_{\hat{a}_{*}\omega_{\hat{S}}^{\otimes 2}, L}(2t)=h^0(L)^2(1+K_S^2)+3(K_{\hat{S}}\cdot \hat{a}^*L)t+4(\deg a_S)h^0(L)t^2.$$
\end{lemm}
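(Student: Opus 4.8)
The plan is to compute $h^0_{\hat{a}_{*}\omega_{\hat{S}}^{\otimes 2}, L}(2t)$ for $-1\ll t<0$ as a Hilbert–Poincaré-type polynomial, exactly in the spirit of the passage preceding Lemma \ref{crf1}. The key observation is that $\hat{a}_{*}\omega_{\hat{S}}^{\otimes 2}$ is $\mathrm{IT}^0$: indeed $\omega_{\hat{S}}^{\otimes 2}=\omega_{\hat{S}}\otimes\omega_{\hat{S}}$ and since $K_{\hat{S}}$ is big and nef (as $\hat{S}\to S$ is étale and $S$ is minimal of general type, so $\hat S$ is minimal of general type), $\hat{a}_{*}\omega_{\hat{S}}^{\otimes 2}$ has vanishing higher cohomology twisted by any $P\in\Pic^0(\hat A)$; equivalently $\hat{a}_{*}(\omega_{\hat{S}/\hat{A}}^{\otimes 2}\otimes\hat{a}^*L')$ is globally generated with no higher cohomology for $L'$ ample (Kollár-type / Viehweg positivity together with Kodaira vanishing on $\hat S$). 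Granting this, by \cite[Theorem 5.2]{JP}, for $-1\ll t<0$ one has
\begin{equation*}
h^0_{\hat{a}_{*}\omega_{\hat{S}}^{\otimes 2}, L}(2t)=\chi\bigl(\hat{a}_{*}\omega_{\hat{S}}^{\otimes 2}\otimes L^{\otimes 2t}\bigr)=\chi\bigl(\hat{S},\, 2K_{\hat{S}}+2t\,\hat{a}^*L\bigr),
\end{equation*}
the last equality by the projection formula and the fact that $\hat{a}$ is generically finite, so $R^i\hat{a}_*\omega_{\hat{S}}^{\otimes 2}=0$ for $i>0$ (covered again by the generic-finiteness and Kollár vanishing).

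Then I would simply expand the right-hand side by Riemann–Roch on the surface $\hat{S}$:
\begin{equation*}
\chi\bigl(\hat{S},\,2K_{\hat{S}}+2t\,\hat{a}^*L\bigr)=\chi(\omega_{\hat{S}})+\tfrac12\bigl(2K_{\hat{S}}+2t\,\hat{a}^*L\bigr)\cdot\bigl(K_{\hat{S}}+2t\,\hat{a}^*L\bigr)
=\chi(\omega_{\hat{S}})+K_{\hat{S}}^2+3t\,(K_{\hat{S}}\cdot\hat{a}^*L)+2t^2(\hat{a}^*L)^2.
\end{equation*}
It remains to substitute the known numerics. Since $\hat{S}\to S$ is étale of degree $h^0(L)^2$ (the degree of the isogeny $\phi_L$, which equals $(\deg\varphi_L)^{1/2}$… more precisely $\deg\phi_L=h^0(L)^2$ for $L$ of the relevant type), we get $\chi(\omega_{\hat{S}})=h^0(L)^2\chi(\omega_S)=h^0(L)^2$ and $K_{\hat{S}}^2=h^0(L)^2K_S^2$; and $(\hat{a}^*L)^2=2(\deg\hat{a})h^0(L)=2(\deg a_S)h^0(L)$ as already used in the Corollary. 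Plugging these in yields exactly
\begin{equation*}
h^0(L)^2(1+K_S^2)+3(K_{\hat{S}}\cdot\hat{a}^*L)t+4(\deg a_S)h^0(L)t^2,
\end{equation*}
as claimed.

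The main obstacle — really the only substantive point — is justifying the identity $h^0_{\hat{a}_{*}\omega_{\hat{S}}^{\otimes 2}, L}(2t)=\chi(\hat{S},2K_{\hat{S}}+2t\,\hat{a}^*L)$ for $t$ in a small left neighborhood of $0$, i.e. the $\mathrm{IT}^0$ property of $\hat{a}_{*}\omega_{\hat{S}}^{\otimes 2}$ (equivalently the bicanonical sheaf's generic vanishing) and the fact that the cohomological rank function agrees with the Euler characteristic there. This is the bicanonical analogue of the statement $h^0_{f_*\omega_X,L}(0)=\chi(\omega_X)$ used via generic vanishing in Proposition \ref{BPS-inequ}; here one needs instead that $\hat{a}_{*}\omega_{\hat{S}}^{\otimes 2}$ is $\mathrm{IT}^0$, which follows because $2K_{\hat{S}}-\hat{a}^*(\text{ample})$ is still big and nef for a suitable representative (using $K_{\hat{S}}^2>0$ and $K_{\hat{S}}$ nef, together with Viehweg weak positivity of $\hat{a}_*\omega_{\hat{S}/\hat{A}}^{\otimes 2}$) and Kodaira–Kollár vanishing then kills all the higher cohomology after pulling back by a sufficiently divisible multiplication map — precisely the setup in which \cite[Theorem 5.2]{JP} applies. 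Everything after that is the routine Riemann–Roch bookkeeping sketched above.
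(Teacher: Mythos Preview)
Your proposal is correct and follows exactly the same approach as the paper: the paper's proof simply asserts that $\hat{a}_*\omega_{\hat{S}}^{\otimes 2}$ is $\mathrm{IT}^0$, invokes \cite[Theorem 5.2]{JP} to identify $h^0_{\hat{a}_{*}\omega_{\hat{S}}^{\otimes 2}, L}(2t)$ with $\chi(\hat{S},2K_{\hat{S}}+2t\,\hat{a}^*L)$, and states the resulting polynomial. Your version spells out the Riemann--Roch expansion and the numerical substitutions ($\chi(\omega_{\hat{S}})=h^0(L)^2$, $K_{\hat{S}}^2=h^0(L)^2K_S^2$, $(\hat{a}^*L)^2=2(\deg a_S)h^0(L)$) that the paper leaves implicit; the only cosmetic point is that your justification of $\mathrm{IT}^0$ is more elaborate than needed --- Kawamata--Viehweg on $\hat{S}$ (since $K_{\hat{S}}$ is big and nef) together with $R^i\hat{a}_*\omega_{\hat{S}}^{\otimes 2}=0$ for $i>0$ suffices, without Viehweg weak positivity.
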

\begin{proof}
It is known that $\hat{a_*}\omega_{\hat{S}}^{\otimes 2}$ is IT$^0$. Thus, by \cite[Theorem 5.2]{JP}, $h^0_{\hat{a}_{*}\omega_{\hat{S}}^{\otimes 2}, L}(2t)$ is simply the  Hilbert-Poincar\'e polynomial $$\chi(\hat{S}, 2K_{\hat{S}}+2t\hat{a}^*L)=h^0(L)^2(1+K_S^2)+3(K_{\hat{S}}\cdot \hat{a}^*L)t+4(\deg a_S)h^0(L)t^2$$ for  $-1\ll t\leq 0$.
\end{proof}

\section{The eventual paracanonical maps}
We still assume that the Chen-Jiang decomposition of $a_{S*}\omega_S$ is simple in this section and then study the eventual paracanonical maps of $S$ or $\hat{S}$.

\begin{lemm}\label{birational}Assume that $\deg a_S\geq 3$, then the eventual map of $K_{\hat{S}}\langle tL\rangle$ for $-1<t\leq 0$ is birational onto its image.
\end{lemm}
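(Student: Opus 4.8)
The plan is to show that the eventual map of $K_{\hat S}\langle tL\rangle$ is birational by exploiting the exact sequence~(\ref{exact1}) together with the fact that the polarization $L$ on $\hat A$ has enough sections. First I would recall the structural input: after twisting~(\ref{exact1}) by $(-1_{\hat A})^*$ and using that $(-1_{\hat A})^*L$ is algebraically equivalent to $L$, the M-regular part of $\hat a_*\omega_{\hat S}$ is a quotient of $H^0(L)^\vee\otimes L$ modulo a successive extension $\cW$ of numerically trivial line bundles. By the formalism of eventual maps in Subsection~2.4, the eventual map of $K_{\hat S}\langle tL\rangle$ is the relative evaluation map of the continuously globally generated subsheaf $(\hat a_*(\omega_{\hat S}\otimes \hat a^*L^{\otimes M^2 t}))_c$ on $\hat a(\hat S)=\hat A$; since $\hat a$ is generically finite onto $\hat A$, birationality of this map amounts to separating the points of a general fiber of $\hat a$, together with separating generic tangent vectors — which for a finite map means only the former.

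The key step is to move $t$ very close to $0$ and compare with the untwisted situation. For $t=0$, I would argue that the eventual paracanonical map of $\hat S$ is birational: here $\hat a_*\omega_{\hat S}=\cO_{\hat A}\oplus \phi_L^*\cM$, and from the surjection $H^0(L)^\vee\otimes (-1_{\hat A})^*L\to \phi_L^*\cM$ one sees $\phi_L^*\cM$ is a quotient bundle of a direct sum of copies of a principal polarization's worth (up to pullback) of $L$; since $\deg a_S\ge 3$ the rank of $\cM$ is at least $2$, so the generic fiber of $\hat a$ (which has cardinality $\deg a_S\ge 3$) is separated by the sections coming from $H^0((-1_{\hat A})^*L)$ — more precisely, one uses that $L$ induces an embedding (or at least a birational map) on the relevant Kummer-type quotient and that the $\mathcal K_L$-equivariant structure distributes the sections among the fiber points. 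Then for $t\in(-1,0)$ one passes to the cover $\mu_M\colon\hat S_M\to\hat S$, $\pi_M\colon\hat A\to\hat A$, and observes that twisting by $\hat a_M^*(L^{\otimes M^2 t})$ only enlarges the linear systems involved: concretely, $\mu_M^*K_{\hat S}\otimes \hat a_M^*(L^{\otimes M^2 t})$ dominates a pullback of $K_{\hat S}$ by the effective (for $t<0$, after the $M$-twist the exponent $M^2+M^2 t>0$) divisor in $|\hat a_M^*L^{\otimes M^2+M^2 t}|$ relative to $K_{\hat S_M/\hat S}$, so the eventual map for the twisted sheaf dominates, birationally, the one for $t=0$. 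Hence birationality propagates from $t=0$ to the whole interval.

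An alternative, and perhaps cleaner, route is to invoke Proposition~\ref{BPS-inequ} in reverse: if for some $t_0\in(-1,0)$ with $h^0_{\hat a_*\omega_{\hat S},L}(t_0)>0$ the eventual map of $K_{\hat S}\langle t_0 L\rangle$ failed to be birational, one could bound its degree below by $2$ and feed this into the left-derivative inequality $D^-F(t_0)\ge 2\,D^-G(t_0)\cdot(\text{something})$ à la \cite[Proposition 5.4]{BPS}; combined with Lemma~\ref{crf1}, which pins down $(K_{\hat S}\cdot \hat a^*L)=4h^0(L)^2$ exactly, and with the explicit quadratics of Lemmas~\ref{crf} and~\ref{2-crf}, one would get a numerical contradiction with $\deg a_S\ge 3$ (the point being that the "$=4h^0(L)^2$'' is exactly the value forcing birationality, not merely degree $\le 2$). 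I would set this up by writing $F(t)=h^0_{\hat a_*\omega_{\hat S}^{\otimes 2},L}(2t)$, $G(t)=h^0_{\hat a_*\omega_{\hat S},L}(t)$, computing $D^-F(0)=3(K_{\hat S}\cdot\hat a^*L)$ and $D^-G(0)=h^0(L)^2\cdot 2$ from the lemmas, and checking that $D^-F(0)=6\,D^-G(0)$ holds \emph{with equality}, which by the proof of Proposition~\ref{BPS-inequ} forces each intermediate eventual map to be birational.

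The main obstacle I anticipate is the base case $t=0$: establishing that the eventual paracanonical map of $\hat S$ itself is birational when $\deg a_S\ge 3$. The twisted cases $t\in(-1,0)$ should follow formally from the $t=0$ case by the monotonicity/pullback argument sketched above (this is the kind of propagation already implicit in \cite[Lemma 2.10]{J}), but the untwisted statement genuinely uses the geometry of $\cM$ — specifically that $\cW$ has rank $h^0(L)+1-\deg a_S < h^0(L)$, so the quotient $\phi_L^*\cM$ retains enough of the sections of $(-1_{\hat A})^*L$ to separate the $\deg a_S$ sheets of $\hat a$. Making this separation statement precise is where the Heisenberg-symmetry input (Subsection~2.6) and Chen–Hacon's analysis in \cite[Section 3]{CH} will do the real work, and it is the step I would write out in full detail.
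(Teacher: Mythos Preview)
Your proposal misses the paper's key observation, which makes the proof a two-line degree count rather than the analytic/symmetry argument you sketch. The point is that for \emph{every} $t\in(-1,0]$ the eventual map of $K_{\hat S}\langle tL\rangle$ is birationally the \emph{same} map, namely the relative evaluation map $\hat\Phi\colon \hat S\dashrightarrow Z\subset \mathbb P_{\hat A}(\phi_L^*\cM)$. This follows from the short exact sequence~(\ref{exact1}): the continuously globally generated part of $\hat a_*\omega_{\hat S}\langle tL\rangle$ is $\phi_L^*\cM\langle tL\rangle$ throughout that range (the kernel $\cW$ contributes nothing), so there is no ``base case $t=0$ plus propagation'' to do at all. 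Once you know this, birationality is pure arithmetic: the image $Z$ spans the projective bundle (equivalently, $Z$ is not contained in $\mathbb P_{\hat A}(\cQ)$ for any proper quotient $\cQ$ of $\phi_L^*\cM$, since $\cM\hookrightarrow a_{S*}\omega_S$), hence $\deg(Z/\hat A)\ge \rank\cM=\deg a_S-1$; but $\deg a_S=\deg(\hat S/Z)\cdot\deg(Z/\hat A)$, so $\deg(\hat S/Z)\le \deg a_S/(\deg a_S-1)<2$ once $\deg a_S\ge 3$.

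Both of your proposed routes have genuine problems beyond being roundabout. In your first approach, the propagation step is backwards: for $t<0$ the twist $\hat a_M^*L^{\otimes M^2 t}$ is anti-effective, so the twisted linear system is \emph{smaller}, not larger, and there is no reason birationality at $t=0$ survives; your parenthetical ``$M^2+M^2t>0$'' does not help, since the comparison you need is between $K_{\hat S_M}\langle M^2 t\,L\rangle$ and $K_{\hat S_M}$, not $K_{\hat S_M}\langle (M^2+M^2t)L\rangle$. In your alternative route, the logic is circular: Lemma~\ref{birational} is precisely the hypothesis one feeds \emph{into} Proposition~\ref{BPS-inequ}, and the equality $D^-F(0)=6D^-G(0)$ at a single point does not force birationality of the intermediate eventual maps---that implication runs the other way. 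Finally, invoking Heisenberg symmetry or \cite{CH} for the $t=0$ case is unnecessary and would not by itself give the result for general $(\hat A,L)$.
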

\begin{proof} Indeed, by (\ref{exact}), for $-1<t\leq 0$, the eventual map of $K_{\hat{S}}\langle tL\rangle$ is birationally equivalent to the relative evaluation map of (the \'etale pull-back of) $\phi_L^*\cM$. Let $\Phi$ be the relative evaluation map and $Z$ be its image. We have the following commutative diagram:
\begin{eqnarray*}
\xymatrix{\hat{S}\ar[dr]\ar@{.>}[r]^(.3){\Phi} & Z\subset \mathbb P_{\hat{A}}(\phi_L^*\cM)\ar[d] \ar@{^{(}->}[r] &\mathbb P_{\hat{A}}(H^0(L)^{\vee}\otimes (-1_{\hat{A}})^*L)\simeq \hat{A}\times \mathbb P^{h^0(L)-1}\ar[dl]\\
& \hat{A}.}
\end{eqnarray*}
By construction, $\deg(Z/\hat{A})\geq \rank(\cM)=\deg a_S-1$. Thus if $\deg a_S\geq 3$, we have $\Phi: \hat{S}\rightarrow Z$ is birational since $\deg a_S=\deg(\hat{S}/\hat{A})=\deg(\hat{S}/Z)\deg(Z/\hat{A})$.
\end{proof}

We also need to study the eventual paracanonical map more carefully. Let $\phi: a_S^*\cM\rightarrow \omega_S$ be the evaluation map and let $\omega_S\otimes \cI_W$ be the image, where $W$ is a subscheme of $S$.

\begin{lemm}\label{indeterminacy}$W$ is supported on the exceptional locus of $a_S$.
\end{lemm}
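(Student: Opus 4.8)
The plan is to compare the evaluation map $\phi : a_S^*\cM \to \omega_S$ with the canonical sheaf at the level of global sections twisted by $\Pic^0(A)$, and to exploit the fact that $\cM$ is the M-regular summand of $a_{S*}\omega_S$. First I would recall, from Subsection 2.4 and \cite[Lemma 2.3]{J}, that $\cM = (a_{S*}\omega_S)_c$ is continuously globally generated, and that for $Q \in \Pic^0(A)$ general one has $H^0(A, \cM \otimes Q) = H^0(A, a_{S*}\omega_S \otimes Q) = H^0(S, \omega_S \otimes a_S^*Q)$. Hence the relative evaluation map of $\cM$ over $A$ is, after pulling back by $a_S$ and composing with the natural map $a_S^* a_{S*}\omega_S \to \omega_S$, exactly the map $\phi$; its image is $\omega_S \otimes \cI_W$ with $W$ the scheme of base points picked up in this process. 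The key point is that on the open set $U \subset S$ where $a_S$ is an isomorphism onto its image (equivalently, away from the exceptional locus $\Exc(a_S)$, since $a_S$ is generically finite between smooth surface and abelian surface, any contracted curve lies in $\Exc(a_S)$ and $a_S$ is an embedding near a general point of $U$), the sheaf $a_S^* a_{S*}\omega_S$ has $\omega_S$ as a direct summand via the trace/adjunction splitting.

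Concretely, over $U$ the inclusion $\cO_A \hookrightarrow a_{S*}\omega_S$ dualizes (using that $a_{S*}\omega_S$ is locally free where $a_S$ is finite flat, or just working on the locus where $a_S$ is étale onto its image) to a trace map $a_{S*}\omega_S \to \cO_A$ splitting off $\cO_A$; what matters is that over $U$, where $a_S|_U$ is an open immersion, the adjunction counit $a_S^* a_{S*}\omega_S \to \omega_S$ already admits the section coming from the $\cM$-summand, so the composite $a_S^*\cM \to a_S^* a_{S*}\omega_S \to \omega_S$ is surjective on $U$. Indeed $a_S^*\cM|_U$ contains a copy of $\omega_S|_U$ mapping isomorphically. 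Therefore $W \cap U = \vide$, which says precisely that $W$ is supported on $S \setminus U = \Exc(a_S)$.

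Alternatively — and this is the route I would actually write out to be safe — I would argue fibrewise with continuous global generation: for a general point $s \in S$ not on $\Exc(a_S)$, pick $Q \in \Pic^0(A)$ general; since $\cM$ is continuously globally generated, the sections in $H^0(A, \cM \otimes Q')$ for $Q'$ ranging over a neighbourhood of $Q$ generate $\cM$ at $a_S(s)$, hence (pulling back) $a_S^*\cM$ at $s$, and — because $a_S$ is a local isomorphism near $s$ and $H^0(A,\cM\otimes Q') \hookrightarrow H^0(S,\omega_S \otimes a_S^*Q')$ is the space of sections defining $\phi$ — these sections already lie in the image, so $\phi$ is surjective at $s$. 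Thus $s \notin \Supp W$. Since this holds for every $s \notin \Exc(a_S)$, we conclude $\Supp W \subset \Exc(a_S)$.

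The main obstacle is the bookkeeping in identifying the map $\phi$ of the statement (evaluation of $a_S^*\cM \to \omega_S$) with the relative evaluation map of the continuously-globally-generated summand from Subsection 2.4, and making sure the ``general $Q$'' in the definition of $(a_{S*}\omega_S)_c$ does not lose information at the specific point $s$ — this is handled exactly by continuous global generation, which is why the property is invoked. Everything else is a local computation near a point where $a_S$ is an isomorphism.
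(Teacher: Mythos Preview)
There is a genuine gap. Your argument assumes that away from the exceptional locus $\Exc(a_S)$ the map $a_S$ is an open immersion (``an isomorphism onto its image'', ``a local isomorphism near $s$''). But in the setting of the lemma $\deg a_S=\rank\cM+1\geq 2$, so $a_S$ is \emph{nowhere} an open immersion; the complement of $\Exc(a_S)$ is the locus where $a_S$ is \emph{finite} of degree $\geq 2$, not where it is an isomorphism. Over a point $a\in A$ in the finite locus the fibre $a_S^{-1}(a)$ consists of several points, and surjectivity of $\phi$ at a given preimage $s$ amounts to asking that the subspace $\cM\otimes k(a)\subset (a_{S*}\omega_S)\otimes k(a)\cong\bigoplus_{s'\in a_S^{-1}(a)}\omega_S\otimes k(s')$ project onto the $s$-th factor. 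This is a nontrivial statement about the splitting $a_{S*}\omega_S=\cO_A\oplus\cM$ and does not follow from continuous global generation: CGG tells you that sections of $\cM\otimes Q'$ generate $a_S^*\cM$ at $s$, but after composing with $\phi$ you only generate the \emph{image} of $\phi_s$, which is precisely what is in question. (Note also that you write ``for a general point $s\notin\Exc(a_S)$'', whereas the lemma requires the conclusion for \emph{every} such $s$.)

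The paper's proof supplies exactly the missing ingredient by citing the Casnati--Ekedahl structure theorem \cite[Theorem~2.1]{CE}: over the finite locus $U\subset A$ the restriction $a_S^{-1}(U)\to U$ is a finite flat morphism between smooth varieties, hence a Gorenstein cover, and Casnati--Ekedahl states precisely that the evaluation $a_S^*\cM\to\omega_{S/A}=\omega_S$ of the Tschirnhausen bundle is surjective there (indeed, it defines an embedding of $a_S^{-1}(U)$ into $\mathbb P_U(\cM)$).
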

\begin{proof}

Let $U\subset A$ be the finite locus of $a_S$. Then by \cite[Theorem 2.1]{CE}, $a_S^*\cM\rightarrow \omega_S$ is surjective over $a_S^{-1}(U)$.
 \end{proof}

 \begin{rema} Since $a_{S*}\omega_S=\cO_A\oplus \cM$ and $\cM$ is M-regular and hence continuously globally generated, we conclude by Lemma \ref{indeterminacy} that,  for any open subset $V$ of $\hat{A}$, $$\bigcap_{P\in V}\mathrm{Bs}(|K_S+P|)$$ is supported on the exceptional locus of $a_S$.
 \end{rema}
Let $\rho: S'\rightarrow S$ be a resolution of indeterminacy of the eventual paracanonical map of $S$ (or equivalently the principalization of $\cI_W$) and let $\Phi': S'\rightarrow \mathbb P_{A}(\cM)$ be the corresponding morphism. We have the following commutative diagram:
\begin{eqnarray}\label{eventual-morphism}
\xymatrix{
& \hat{S}\ar[rr]^{\hat{a}}\ar[dd] && \hat{A}\ar[dd]^(.2){\phi_{L}}\\
\hat{S}'\ar[ur]^{\hat{\rho}}\ar[rr]^{\hat{\Phi}'}\ar[dd] && \mathbb P_{\hat{A}}(\varphi_L^*\cM)\ar[dd]\ar[ur]^{\hat{\pi}}\ar@{^{(}->}[rr]&& \hat{A}\times \mathbb P^{h^0(L)-1}\\
& S\ar[rr]^(.3){a_S}&& A\\
S'\ar[ur]^{\rho}\ar[rr]^{\Phi'} &&\mathbb P_{A}(\cM)\ar[ur]^{\pi},}
\end{eqnarray}
where the upper diagram is obtained via the \'etale base change of $\varphi_L$. We then see that
\begin{eqnarray}\label{canonical-bundle}\Phi'^*\cO_{\pi}(1)=\rho^*(K_S-F_1)-F_2,\end{eqnarray} where $F_1$ is $a_S$-exceptional and $F_2$ is $\rho$-exceptional. We note that $\cO_{\hat{\pi}}(1)\simeq L\boxtimes \cO_{\mathbb P^{h^0(L)-1}}(1)$ and thus we may write
\begin{eqnarray}\label{canonical-bundle1}\hat{\rho}^*(K_{\hat{S}}-\hat{F_1})-\hat{F_2}=\hat{\Phi}'^*( (-1_{\hat{A}})^*L\boxtimes \cO_{\mathbb P^{h^0(L)-1}}(1)),
\end{eqnarray}
where $\hat{F_1}$ and $\hat{F_2}$ are the pull-back of $F_1$ and $F_2$ on $\hat{S}'$.
We will denote by $L^-:=(-1_{\hat{A}})^*L$.

\section{Characterizations of minimal surfaces with $p_g=q=2$ and $K_S^2=5$}
\subsection{Surfaces with $p_g=q=2$ and  $\deg a_S=2$}
Suppose that the Albanese morphism $a_S:S\to A:=\Alb(S)$  is of degree $2$. Consider the Stein factorization of $a_S$
\begin{displaymath}
	\xymatrix{
		S\ar[r]^{p}\ar[rd]&\overline{S}\ar[d]^{f}\\
		&A
	}
\end{displaymath}
then $f: \overline{S}\to A$ is a finite double cover. It is determined by a reduced even divisor $B$ on $A$ and its square root $\delta\in \Pic(A)$ with $\mathcal{O}_{A}(B)\cong\delta^2$. Note that the normality of $S$ implies the  normality of $\overline{S}$. Thus $\overline{S}$ has at most isolated singularities.

After a minimal even resolution $\rho: \widetilde{A}\to A$ of singularities of $B$, we obtain a double cover data $(\widetilde{B},\widetilde{\delta})$ on $\widetilde{A}$ with its image under $\rho$ is $(B,\delta)$, where $\widetilde{B}$ is a smooth reduced even divisor on $\widetilde{A}$ with the square root $\widetilde{\delta}\in \Pic(\widetilde{A})$. More precisely, let $\rho=\rho_1\circ\cdots\circ\rho_{n}$ be a composition of a series of blow-ups $\rho_{i}$, and let $m_i$ be the order of even and reduced  inverse image of $B$ at the center of  $\rho_{i}$. Define $k_i=\lfloor\frac{1}{2}m_i\rfloor$, then one has
\begin{equation*}
	\widetilde{\delta}\sim \rho^{\ast}\delta-\sum_{i=1}^{n}k_iE_i, \widetilde{B}\sim \rho^{\ast}B-2\sum_{i=1}^{n}k_iE_i.
\end{equation*}
where each $E_i$ is the total inverse image of the center of $\rho_{i}$ in $\widetilde{A}$.
Let $\widetilde{f}:\widetilde{S}\to \widetilde{A}$ be the double cover associated to $(\widetilde{B},\widetilde{\delta})$. Then $\widetilde{S}$ is smooth but may be not minimal. Note that
\begin{equation*}
	K_{\widetilde{S}}= \widetilde{f}^{\ast}(\rho^{\ast}(K_A+\delta)-\sum_{i=1}^{n}(k_i-1)E_i).
\end{equation*}
The numerical invariants of $\widetilde{S}$ are
\begin{eqnarray*}
&&	K_{\widetilde{S}}^2=2 (K_A+\delta)^2-2\sum_{i=1}^{n}(k_i-1)^2,
	\\&& \chi(\mathcal{O}_{\widetilde{S}})=2\chi(\mathcal{O}_{A})+\frac{1}{2}(\delta^2+K_A\cdot \delta)-\frac{1}{2}\sum_{i=1}^{n}k_i(k_i-1).
\end{eqnarray*}
 Contracting the $(-1)$-curves on $\widetilde{S}$ yields a birational morphism $\epsilon:\widetilde{S}\to S$.
 It is well-known that $\epsilon$ is a composition of contractions of $(-1)$-curves. We denote by $l$ the numbers of contractions of $(-1)$-curves of $\epsilon$.
  Then
\begin{equation*}
	K_{S}^2=K_{\widetilde{S}}^2+l,\; \chi(\mathcal{O}_{\widetilde{S}})=\chi(\mathcal{O}_{S}).
\end{equation*}
 \begin{lemm}
	Let $S$ be a minimal surface of general type with $p_g=q=2$ and $\deg a_S=2$. Then $K_S^2\neq 5$.
\end{lemm}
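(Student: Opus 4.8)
The plan is to turn the numerical formulas for $(\widetilde B,\widetilde\delta)$ recorded just above the statement into a single clean expression for $K_S^2$, and then to show that the value $5$ forces the branch divisor to have only negligible singularities, which already pins $S$ down to the $K_S^2=4$ case and leaves no room for $5$.

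First I would feed the abelian-surface data $K_A=0$, $\chi(\cO_A)=0$ into the displayed formulas, obtaining $K_{\widetilde S}^2=2\delta^2-2\sum_i(k_i-1)^2$ and $\chi(\cO_{\widetilde S})=\tfrac12\delta^2-\tfrac12\sum_i k_i(k_i-1)$. Since $p_g=q=2$ gives $\chi(\cO_{\widetilde S})=\chi(\cO_S)=1$, this reads $\delta^2=2+\sum_i k_i(k_i-1)$. Substituting into $K_S^2=K_{\widetilde S}^2+l$ and using $k_i(k_i-1)-(k_i-1)^2=k_i-1$, I get
\[
K_S^2 \;=\; 4+2\sum_i(k_i-1)+l .
\]
Here $l\ge 0$, and each $k_i=\lfloor m_i/2\rfloor\ge 1$ because the $\rho_i$ are blow-ups centred at singular points of the even reduced transforms of $B$, so $m_i\ge 2$. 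Hence $K_S^2\ge 4$ (recovering Debarre's bound), and if $K_S^2=5$ then $2\sum_i(k_i-1)+l=1$; as the first summand is a nonnegative even integer and the second a nonnegative integer, this forces $\sum_i(k_i-1)=0$ and $l=1$, whence $k_i=1$ for every $i$.

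It remains to contradict ``$k_i=1$ for all $i$ and $l=1$''. When every $k_i=1$, the formula for $\chi(\cO_{\widetilde S})$ collapses to $\tfrac12\delta^2=1$, so $\delta^2=2$, and the canonical bundle formula gives $K_{\widetilde S}=\widetilde f^{*}\rho^*\delta$, since all the $(k_i-1)E_i$ terms vanish. Now $2\delta\sim B$ is an effective divisor on the abelian surface $A$; since $A$ carries no rational curves, every irreducible curve $C$ on $A$ satisfies $C^2=2p_a(C)-2\ge 0$, so every effective divisor on $A$ is nef, and in particular $\delta$ is nef. Therefore $\rho^*\delta$ is nef on $\widetilde A$ and $K_{\widetilde S}=\widetilde f^{*}\rho^*\delta$ is nef on $\widetilde S$; combined with $K_{\widetilde S}^2=2\delta^2=4>0$, this shows $\widetilde S$ is itself a minimal surface of general type. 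Then $\epsilon\colon\widetilde S\to S$ contracts no $(-1)$-curve, i.e. $l=0$, contradicting $l=1$. Hence $K_S^2\neq 5$.

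The computation is elementary; the only step needing a little thought is the last one, where one uses that a branch divisor with only negligible singularities forces $\delta$ to be (numerically) a principal polarization and $\widetilde S$ to be already minimal. This is precisely what places such double covers in the $K_S^2=4$ stratum of \cite{CMLP}, so that there is no minimal surface with $p_g=q=2$, $\deg a_S=2$ and $K_S^2=5$.
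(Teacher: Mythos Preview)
Your proof is correct and follows essentially the same route as the paper: derive $K_S^2=4+2\sum_i(k_i-1)+l$, observe that $K_S^2=5$ forces every $k_i=1$, and then conclude that $\widetilde S$ is already minimal so $K_S^2=K_{\widetilde S}^2=4$, a contradiction. The only difference is cosmetic: where the paper invokes the standard fact that negligible singularities of $B$ yield a minimal $\widetilde S$, you argue this explicitly by showing $K_{\widetilde S}=\widetilde f^{*}\rho^{*}\delta$ is nef via the nefness of effective divisors on an abelian surface.
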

\begin{proof}
	By above discussion, one has
	\begin{equation*}
		\chi(\mathcal{O}_{\widetilde{S}})=\frac{1}{2}\delta^2-\frac{1}{2}\sum_{i=1}^{n}k_i(k_i-1),  K_{\widetilde{S}}^2=2 \delta^2-2\sum_{i=1}^{n}(k_i-1)^2.
	\end{equation*}
	Hence
	\begin{equation}
		K_S^2\geq K_{\widetilde{S}}^2=4+2\sum_{i=1}^{n}(k_i-1).
	\end{equation}
	Now suppose that $K_S^2=5$. It follows that  all $k_i=1$ and $K_{\widetilde{S}}=\widetilde{f}^{*}\rho^*\delta$ and thus all the blow-ups centers of $\rho$ are negligible singularities and $\tilde{S}\simeq S$ is minimal. In this case, $K_S^2=4$, a contradiction.
\end{proof}

\begin{theo}\label{degree3}Let $S$ be a minimal surface of general type with $p_g(S)=q(S)=2$ and $K_S^2=5$, the Albanese map $a_S$ is generically finite of degree $3$.
\end{theo}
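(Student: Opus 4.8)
The plan is to reduce to the case where $a_S$ is generically finite of degree $\geq 3$, and then to exploit that $K_S^2=5=5\chi(\omega_S)$ makes $S$ lie \emph{on} the Barja--Pardini--Stoppino line, so that the inequality of Proposition~\ref{BPS-inequ} is forced to be an equality; the equality clause, fed into the cohomological rank function computations of Section~4, determines $\deg a_S$.

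First I would dispose of the degenerate cases. If $a_S$ is not generically finite, its image is a curve of genus $q=2$ and $a_S$ is an Albanese fibration onto a curve; by the classification of such surfaces with $p_g=q=2$ (\cite{HP, Pg, Z, Pm}) none has $K_S^2=5$. So $a_S$ is generically finite and surjective, and $\deg a_S\neq 1$ because a surface of general type is not birational to an abelian surface; the case $\deg a_S=2$ is excluded by the lemma just proved. Hence $\deg a_S\geq 3$. Since $K_S^2=5\neq 8$, the theorem of Section~3 shows that the Chen--Jiang decomposition of $a_{S*}\omega_S$ is simple: $a_{S*}\omega_S=\cO_A\oplus\cM$ with $\cM$ M-regular of rank $\deg a_S-1$. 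We are then in the setting of Sections~4--5, with the ample line bundle $L$ on $\hat{A}$, the isogeny $\phi_L\colon\hat{A}\to A$, and the étale base change $\hat{a}\colon\hat{S}\to\hat{A}$ satisfying $\hat{a}_*\omega_{\hat{S}}=\cO_{\hat{A}}\oplus\phi_L^*\cM$; here $\hat{S}$ is a connected smooth minimal surface of general type (it is étale over $S$, and connected since $\pi_1(S)\to\pi_1(A)$ is surjective) and $\hat{a}$ is generically finite and surjective onto $\hat{A}$.

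The core step is to apply Proposition~\ref{BPS-inequ} to $\hat{a}\colon\hat{S}\to\hat{A}$ with the polarization $L$. By Lemma~\ref{crf}, $h^0_{\hat{a}_*\omega_{\hat{S}},L}(t)=h^0(L)^2(t+1)^2$ for $-1\leq t\leq 0$, which is positive precisely on $(-1,0]$ and vanishes for $t\leq -1$; and by Lemma~\ref{birational}, which is exactly where $\deg a_S\geq 3$ is used, the eventual map of $K_{\hat{S}}\langle tL\rangle$ is birational onto its image for $-1<t\leq 0$. Hence the hypothesis of Proposition~\ref{BPS-inequ} is satisfied, so $K_{\hat{S}}^2\geq 5\chi(\omega_{\hat{S}})$. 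But $\hat{S}\to S$ is étale of degree $\deg\phi_L$, whence $K_{\hat{S}}^2=5\deg\phi_L=5\chi(\omega_{\hat{S}})$: the equality case holds, and therefore
$$h^0_{\hat{a}_*\omega_{\hat{S}}^{\otimes 2},L}(2t)=6\,h^0_{\hat{a}_*\omega_{\hat{S}},L}(t)\qquad\text{for }t\leq 0.$$
Near $t=0$ both sides are polynomials; substituting the expressions of Lemma~\ref{2-crf} and Lemma~\ref{crf} (and Lemma~\ref{crf1} for the linear term) and comparing the coefficients of $t^2$ yields $4(\deg a_S)h^0(L)=6h^0(L)^2$, i.e.\ $\deg a_S=\tfrac32 h^0(L)$. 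On the other hand the observation preceding Lemma~\ref{crf} gives $\rank\cM=\deg a_S-1\leq h^0(L)$, so $\deg a_S-1\leq\tfrac23\deg a_S$, i.e.\ $\deg a_S\leq 3$. Combined with $\deg a_S\geq 3$ this forces $\deg a_S=3$ (and then $h^0(L)=2$, so $L$ is of type $(1,2)$ and the subscheme $Z$ is empty, in agreement with the Chen--Hacon picture).

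The step I expect to be the main obstacle is the one in the previous paragraph: one cannot apply Proposition~\ref{BPS-inequ} to $a_S\colon S\to A$ itself, since the eventual paracanonical map of $S$ need not be birational, and it is precisely the passage to the étale cover $\hat{S}$ together with the rank estimate behind Lemma~\ref{birational} (equivalently, the assumption $\deg a_S\geq 3$) that makes the birationality hypothesis available. Once that is in place, the rest is bookkeeping with the cohomological rank functions already computed in Section~4, and the degenerate cases are handled routinely by the cited classification and the preceding lemma.
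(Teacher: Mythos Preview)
Your proof is correct and follows essentially the same route as the paper: reduce to $\deg a_S\geq 3$, invoke Lemma~\ref{birational} to verify the hypothesis of Proposition~\ref{BPS-inequ} for $\hat{a}\colon\hat{S}\to\hat{A}$, use the equality clause to obtain $h^0_{\hat{a}_*\omega_{\hat{S}}^{\otimes 2},L}(2t)=6h^0_{\hat{a}_*\omega_{\hat{S}},L}(t)$, and then compare the $t^2$-coefficients from Lemmas~\ref{crf} and~\ref{2-crf} together with $\rank\cM\leq h^0(L)$. Your write-up is in fact more explicit than the paper's about the degenerate cases and about why the passage to $\hat{S}$ is needed.
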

\begin{proof}
We have already seen that $\deg a_S\geq 3$ when $K_S^2=5$. By Lemma \ref{birational}, we can apply Proposition \ref{BPS-inequ}. Since $K_S^2=5\chi(\omega_S)$, we have   $h^0_{\hat{a}_{*}\omega_{\hat{S}}^{\otimes 2}, L}(2t)=6h^0_{\hat{a}_{*}\omega_{\hat{S}}, L}(t)$ for $t<0$. By Lemma \ref{crf} and Lemma \ref{2-crf}, comparing the coefficient of $t^2$, we have $4\deg a_S=6h^0(L)$. On the other hand, $\deg a_S-1=\rank \cM\leq h^0(L)$. Thus $h^0(L)\leq 2$. We then have $h^0(L)=2$, $\deg a_S=3$, and $\cM=\hat{L}^{\vee}$.
\end{proof}

\begin{coro}\label{ch-surface} A minimal surface $S$ of general type with $p_g(S)=q(S)=2$ and $K_S^2=5$ is a Chen-Hacon surface.
\end{coro}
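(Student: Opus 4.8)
The plan is to combine the structural data obtained in Theorem \ref{degree3} with the description of the short exact sequence (\ref{exact}) to reconstruct $S$ as a triple cover of an abelian surface with the correct Tschirnhausen bundle, and then invoke the classification of such covers. By Theorem \ref{degree3} we already know that $a_S$ is generically finite of degree $3$, that $h^0(L)=2$, and — since $\mathrm{length}(Z)=h^0(L)+1-\deg a_S=0$ — that $Z=\vide$, so that $\cM=\widehat L^{\,\vee}$ with $L$ an ample line bundle of type $(1,2)$ on $\hat A=\Pic^0(A)$. First I would record that (\ref{exact}) degenerates to the isomorphism $\cM\simeq(-1_A)^*\widehat L^{\,\vee}$, so that $\cM$ is a rank $2$ vector bundle on $A$ which is (up to the involution) the Fourier–Mukai transform of an ample $(1,2)$-line bundle; in particular $\cM$ is exactly the Tschirnhausen bundle appearing in the CH-surface construction recalled in Example [CH surfaces].

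Next I would extract the triple-cover structure. Since $a_S\colon S\to A$ is generically finite of degree $3$ and $S$ is normal (indeed smooth), the Stein factorization gives a finite degree-$3$ cover $\bar a\colon \bar S\to A$ with $\bar S$ normal, $S\to\bar S$ a resolution, and $\bar a_*\cO_{\bar S}=\cO_A\oplus \cE^{\vee}$ for the Tschirnhausen bundle $\cE$. By relative duality $\bar a_*\omega_{\bar S}=\cO_A\oplus\cE$, while from the Chen–Jiang decomposition $a_{S*}\omega_S=\cO_A\oplus\cM$; I would argue, using Lemma \ref{indeterminacy} (so that $a_{S*}\omega_S$ and $\bar a_*\omega_{\bar S}$ agree away from the exceptional locus and both summands are reflexive of rank $2$ on the smooth surface $A$), that $\cE\simeq\cM\simeq\widehat L^{\,\vee}$ up to the sign involution, possibly after the harmless normalization that $L$ may be chosen symmetric (using that a general $(1,2)$-polarization can be taken symmetric, as in Example [CH surfaces]). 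Thus $\bar a\colon\bar S\to A$ is a triple cover with Tschirnhausen bundle $\widehat L^{\,\vee}$ for a symmetric $(1,2)$-ample $L$, and by Miranda's theory such covers are classified by a section of $S^3\cE^{\vee}\otimes\ww^2\cE$; the computation of $h^0(A,S^3\widehat L\otimes\ww^2\widehat L^{\,\vee})$ recalled in Example [CH surfaces] shows this space is $2$-dimensional, so $\bar S$ lies in the one-dimensional (projectivized) family of Chen–Hacon total spaces. Finally, since $K_S^2=5$ forces $\bar S$ to have exactly the singularity type $\tfrac13(1,1)$ dictated by the construction (any milder singularities would give $K_S^2=K_{\bar S}^2\le 4$ after minimal resolution, contradicting minimality of $S$ with $K_S^2=5$), the minimal resolution $S$ is precisely a Chen–Hacon surface.

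The main obstacle I anticipate is the middle step: showing that the abstract M-regular summand $\cM$ coming from the Chen–Jiang decomposition literally coincides with the Tschirnhausen bundle of the Stein factorization of $a_S$, and that the relevant $(1,2)$-line bundle $L$ on the dual abelian surface can be taken symmetric so that the Miranda classification of \cite{M} and \cite{CH} applies verbatim. This requires care with reflexivity and with the behavior over the exceptional locus of $a_S$, and with comparing two a priori different rank-$2$ sheaves that agree on a big open set of the smooth surface $A$; once that identification is in place, the rest is a direct appeal to the structure results of \cite{M}, \cite{CH}, and \cite{PP1}. The genericity of $(\hat A,L)$ and the precise singularity analysis are then standard, and I would simply cite \cite{PP1} for the statement that the resulting moduli space is irreducible with general member a Chen–Hacon surface.
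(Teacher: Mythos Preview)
Your approach is essentially the same as the paper's --- Stein factorization, identification of the Tschirnhausen bundle, and an appeal to \cite{PP1} --- but you have misidentified where the difficulty lies and introduced a gap where the paper has none.

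The ``main obstacle'' you anticipate is not an obstacle at all. Since $a_{S*}\omega_S=\cO_A\oplus\cM$ is locally free and $R^1a_{S*}\omega_S=0$ (Koll\'ar torsion-freeness plus the fact that the non-finite locus of $a_S$ has $0$-dimensional image), Grothendieck duality with $\omega_A=\cO_A$ gives directly $a_{S*}\cO_S=(a_{S*}\omega_S)^{\vee}=\cO_A\oplus\cM^{\vee}=\cO_A\oplus\widehat L$; this is exactly $\bar a_*\cO_{\bar S}$ and no reflexivity gymnastics or symmetry hypothesis on $L$ is needed. The paper simply writes this down.

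The genuine weak point is your singularity step. The assertion that ``any milder singularities would give $K_S^2=K_{\bar S}^2\le 4$'' is not correct: for a flat Gorenstein triple cover of $A$ with this Tschirnhausen bundle one has $2c_1(\widehat L)^2-3c_2(\widehat L)=5$ regardless, and the $\tfrac13(1,1)$ point is only what happens for a \emph{general} member of the family, not a forced feature of every member. The paper's argument is cleaner and correct: it computes $2c_1(\widehat L)^2-3c_2(\widehat L)=5=K_S^2$, which by \cite[Definition~1.5]{PP1} is precisely the negligible-singularities condition, and then invokes \cite[Theorem~A]{PP1} (or alternatively \cite[Theorem~4.1]{AC}) to conclude that $S$ is a Chen--Hacon surface. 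You should replace your ad~hoc singularity discussion with this Chern-class check.
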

\begin{proof}
 Let $ a_S: S\rightarrow \overline{S}\xrightarrow{\overline{a}} A$ be the Stein factorization of $a_S$.
By Theorem \ref{degree3}, $a_{S*}\cO_S=\overline{a}_*\cO_{\overline{S}}=\cO_A\oplus \widehat{L}$ is a rank $3$ vector bundle and $K_S^2=2c_1(\widehat{L})^2-3c_2(\widehat{L})=5$. Thus, $\overline{S}$ has negligible singularities by \cite[Definition 1.5]{PP1}. Thus  by  Theorem A  of \cite{PP1}, $S$ is a Chen-Hacon surface. We remark that Corollary \ref{ch-surface} also follows from \cite[Theorem 4.1]{AC}.

\end{proof}

\begin{rema}By the results in this section, we see that the method to prove the Barja-Pardini-Stoppino inequality in \cite{BPS} is indeed sharp. By \cite[Proposition 2.12]{J1}, the Barja-Pardini-Stoppino inequality is not sharp for those surfaces whose Albanese morphism is birational onto its image. Thus
 it is interesting to ask whether or not there exists a sharp Severi type inequality when the Albanese morphism is birational onto its image.
\end{rema}
 \section{Minimal surfaces with $p_g=q=2$ and $K_S^2=6$}

The classification of surfaces $S$ with $p_g=q=2$ and $K_S^2=6$ is more complicated. Note that when $\deg a_S=2$, Penegini and Polizzi had classified the structure of $S$ in \cite{PP2}. Thus we will always assume that $\deg a_S\geq 3$ in this section. In particular, the eventual paracanonical map of $S$ is birational onto its image by Lemma \ref{birational}.

The proof of Theorem \ref{6} is quite complicated. Let's briefly outline the steps of the proof.  We first show that the eventual paracanonical map of $\hat{S}$ is indeed a biraional morphism. Then we realized that it is crucial to study   $\hat{S}$ via the morphism induced by the linear system $|K_{\hat{S}}-\hat{a}^*L|$. From which, we prove that $\deg a_S=\deg \hat{a}\leq 7$.  We finally apply the symmetry induced by the Heisenberg groups and other delicate geometric considerations to finish the proof of Theorem \ref{6}.

\begin{prop}\label{eventual=canonical}Assume that $K_S^2=6$ and $\deg a_S>2$, the eventual paracanonical map of  $\hat{S}$ is a morphism.
\end{prop}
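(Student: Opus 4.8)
The plan is to re-express the base locus of the eventual paracanonical map of $\hat S$ as the base locus of the explicit linear system $|K_{\hat S}-\hat a^*L^-|$, to exhibit a nef divisor inside it, and then to use that the $\hat a$-exceptional locus is negative definite to force that base locus to be empty. Since $\deg a_S\ge 3$, Lemma \ref{birational} already tells us the eventual paracanonical map of $\hat S$ is birational onto its image; it is the rational map attached to the evaluation $\hat a^*\phi_L^*\cM\to\omega_{\hat S}$, whose image is $\omega_{\hat S}\otimes\cI_{\hat W}$ for a subscheme $\hat W$ supported on $\Exc(\hat a)$ by Lemma \ref{indeterminacy}. Twisting (\ref{exact1}) by $(L^-)^{-1}$ displays $\phi_L^*\cM\otimes(L^-)^{-1}$ as a quotient of $H^0(L)^{\vee}\otimes\cO_{\hat A}$, hence as a globally generated sheaf; moreover $\cW=\varphi_L^*((-1_A)^*\widehat{L|_Z}^{\vee})$ is a successive extension of numerically trivial line bundles, so $\cW\otimes(L^-)^{-1}$ is a successive extension of inverses of ample line bundles and $H^i(\hat A,\cW\otimes(L^-)^{-1})=0$ for $i\le 1$; the cohomology sequence then gives an isomorphism $H^0(L)^{\vee}\xrightarrow{\ \sim\ }H^0(\hat A,\phi_L^*\cM\otimes(L^-)^{-1})=H^0(\hat S,\omega_{\hat S}\otimes\hat a^*(L^-)^{-1})$. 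Composing the pull-back to $\hat S$ of the global generation surjection with the $\hat a^*(L^-)^{-1}$-twist of $\hat a^*\phi_L^*\cM\to\omega_{\hat S}$ identifies $\omega_{\hat S}\otimes\hat a^*(L^-)^{-1}\otimes\cI_{\hat W}$ with the image of the evaluation map of global sections of the line bundle $\omega_{\hat S}\otimes\hat a^*(L^-)^{-1}$; that is, $\hat W=\Bs|M|$ with $M:=K_{\hat S}-\hat a^*L^-$, so it suffices to prove that $|M|$ is base-point free.

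Next I collect numerical data. Since $\hat S\to S$ is \'etale of degree $h^0(L)^2$ and $L^2=2h^0(L)$ with $L^-$ algebraically equivalent to $L$, Lemma \ref{crf1} gives $(K_{\hat S}\cdot\hat a^*L)=4h^0(L)^2$, $K_{\hat S}^2=6h^0(L)^2$ and $(\hat a^*L)^2=2h^0(L)\deg a_S$, whence
$$M^2=2h^0(L)\bigl(\deg a_S-h^0(L)\bigr),\qquad K_{\hat S}\cdot M=2h^0(L)^2,\qquad h^0(M)=h^0(L).$$
Setting $D:=\hat\Phi'^*\cO_{\hat\pi}(1)$ as in (\ref{eventual-morphism})--(\ref{canonical-bundle1}), note that $\cO_{\hat\pi}(1)$ is the restriction of the ample line bundle $L^-\boxtimes\cO_{\mathbb P^{h^0(L)-1}}(1)$ and $\hat\Phi'$ is birational onto its image, so $D$ is nef and big; hence $\hat\rho^*K_{\hat S}=D+\hat\rho^*\hat F_1+\hat F_2$ with $\hat F_1+\hat F_2$ effective forces $D^2\le K_{\hat S}^2$. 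Writing $D=\hat\rho^*\hat a^*L^-+b$, where $b$ is the pull-back of a hyperplane of $\mathbb P^{h^0(L)-1}$ (so $b$ is nef, $b^2\ge 0$), and using $D\cdot\hat\rho^*\hat a^*L^-=(K_{\hat S}\cdot\hat a^*L)=4h^0(L)^2$ together with $\hat F_1\cdot\hat a^*L=0$, one gets $D^2=8h^0(L)^2-2h^0(L)\deg a_S+b^2\ge 8h^0(L)^2-2h^0(L)\deg a_S$, hence $\deg a_S\ge h^0(L)$. Combined with $\deg a_S-1=\rank\cM\le h^0(L)$ this leaves only $\deg a_S\in\{h^0(L),h^0(L)+1\}$ (and Hodge index applied to $M,K_{\hat S}$ even gives $h^0(L)\ge 3$). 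Also $M$ is nef: for a curve $C$ contracted by $\hat a$ one has $M\cdot C=K_{\hat S}\cdot C\ge 0$, while for $C$ not contracted $C\not\subseteq\Bs|M|$ (as $\Bs|M|\subseteq\Exc(\hat a)$), so $M\cdot C\ge 0$.

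It remains to show $\Bs|M|=\emptyset$. Let $Z$ be the fixed part of $|M|$; it is supported on $\Exc(\hat a)$, whose components have negative definite intersection matrix, so $Z^2<0$ unless $Z=0$. If $\deg a_S=h^0(L)$ then $M^2=0$, nefness gives $M\cdot Z=M\cdot(M-Z)=0$, hence $(M-Z)^2=(M-Z)\cdot Z=0$ and so $Z^2=M\cdot Z=0$, forcing $Z=0$; then $|M|$ is a pencil-type system with $M^2=0$ and $h^0(M)\ge 3$, which can have no isolated base point (blowing one up would give a moving part of negative self-intersection), so $|M|$ is base-point free. If $\deg a_S=h^0(L)+1$ then $Z$ is empty in (\ref{exact1}), $\cM\simeq\widehat L^{\vee}$ and $\phi_L^*\cM\simeq H^0(L)^{\vee}\otimes L^-$, so $\mathbb P_{\hat A}(\phi_L^*\cM)=\hat A\times\mathbb P^{h^0(L)-1}$ with $\cO_{\hat\pi}(1)$ ample and the eventual paracanonical map is the morphism-candidate $(\hat a,\psi)$, $\psi\colon\hat S\dashrightarrow\mathbb P^{h^0(L)-1}$ given by the full, nef and big class $M$ with $M^2=2h^0(L)$. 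This last case is the real obstacle: one still must exclude a fixed component of $|M|$ and an isolated base point lying on $\Exc(\hat a)$. I expect to treat the fixed part again by the "nef $+$ negative definite $+$ $K_{\hat S}$ nef" argument, and to rule out isolated base points by a Reider-type estimate on $\hat S$ using $K_{\hat S}^2=6h^0(L)^2$ (equivalently, by exploiting that $\hat\Phi'$ is birational onto $Y\subseteq\hat A\times\mathbb P^{h^0(L)-1}$ with $\cO_{\hat\pi}(1)$ ample, so that any blow-up contributing to $\hat F_2$ would violate the intersection bound $D^2\le K_{\hat S}^2$ together with the numerics above). In all cases $|K_{\hat S}-\hat a^*L^-|$ is base-point free, so $\hat W=\emptyset$ and the eventual paracanonical map of $\hat S$ is a morphism.
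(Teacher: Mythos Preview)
Your reformulation of the problem as showing $|M|=|K_{\hat S}-\hat a^*L^-|$ is base-point free is correct and matches what the paper later calls $\psi$; the numerics and the treatment of the case $\deg a_S=h^0(L)$ (i.e.\ $r=1$) are fine. The genuine gap is the case $r=0$, which you explicitly leave unfinished (``I expect\dots''). Your numerical inequalities $D^2\le K_{\hat S}^2$ and $b^2\ge 0$ only yield $\deg a_S\ge h^0(L)$, which is vacuous when $r=0$, and the vague invocation of a ``Reider-type estimate'' or of ``violating $D^2\le K_{\hat S}^2$'' does not produce a contradiction.

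What is missing is the paper's key quantitative step: one computes directly
\[
b^2=2(1-r)h^0(L)\;-\;2(K_{\hat S}\cdot\hat F_1)+\hat F_1^2+\hat F_2^2,
\]
and then uses that $\hat F_1,\hat F_2$ are pulled back from $S$ (resp.\ $S'$) along an \'etale cover of degree $h^0(L)^2$. On $S$ the intersection numbers are \emph{integers}: if $F_1\neq 0$ then $-2(K_S\cdot F_1)+F_1^2\le -2$ (minimality of $S$ forces components of $F_1$ to be $(-2)$-curves when $K_S\cdot F_1=0$), so on $\hat S$ one gets $-2(K_{\hat S}\cdot\hat F_1)+\hat F_1^2\le -2h^0(L)^2$, which overwhelms $2(1-r)h^0(L)\le 2h^0(L)$ and makes $b^2<0$; likewise $\hat F_2^2=h^0(L)^2F_2^2\le -h^0(L)^2$. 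This ``descend to $S$ and amplify by $h^0(L)^2$'' idea is precisely what your sketch lacks, and without it the $r=0$ case cannot be closed. Incidentally, your Hodge-index observation $h^0(L)\ge 3$ when $r=0$ is a nice shortcut: combined with the formula above it immediately excludes $\hat F_2\neq 0$ without the extra $h^0(L)=2$ analysis the paper carries out, but you still need the explicit $b^2$ computation to run it.
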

\begin{proof}We just need to show that $F_1$ and $F_2$ in (\ref{canonical-bundle}) are $0$ and equivalently $\hat{F_1}$ and $\hat{F_2}$ in (\ref{canonical-bundle1}) are zero. We know that $(K_{\hat{S}}^2)=6h^0(L)^2$, $(K_{\hat{S}}\cdot \hat{a}^*L)=4h^0(L)^2$, and $\deg \hat{a}=\deg a_S=1+h^0(L)-r$, where $r:=\mathrm{length}(Z)$. We also recall that $L^-=(-1_{\hat{A}})^*L$ is algebraically equivalent to $L$. Hence $(K_{\hat{S}}\cdot L^-)=4h^0(L)^2$ and $(L^-)^2=2h^0(L)$. Thus
\begin{eqnarray*}&&(\hat{\Phi'}^*\cO_{\mathbb P^{h-1}}(1))^2\\&=&(\hat{\rho}^*(K_{\hat{S}}-\hat{F_1})-\hat{F_2}-\hat{\rho}^*\hat{a}^*L^-)^2 \\
&=&(K_{\hat{S}}^2)-2(K_{\hat{S}}\cdot \hat{a}^*L^-)+(\hat{a}^*L^-)^2-2(K_{\hat{S}}\cdot \hat{F_1})+(\hat{F_1})^2+(\hat{F_2})^2\\
&=&2(1-r)h^0(L)-2(K_{\hat{S}}\cdot \hat{F_1})+(\hat{F_1})^2+(\hat{F_2})^2.
\end{eqnarray*}

If $F_1\neq 0$, we know that $(F_1^2)<0$ and $K_{S}\cdot F_1\geq 0$ since $F_1$ is $a_S$-exceptional.  We verify easily that $-2(K_S\cdot F_1)+(F_1^2)\leq -2$. Thus
$-2(K_{\hat{S}}\cdot \hat{F_1})+(\hat{F_1})^2\leq -2h^0(L)^2$, which implies that $(\hat{\Phi'}^*\cO_{\mathbb P^{h^0(L)-1}}(1))^2<0$, which is impossible.

If $F_2\neq 0$, the same argument implies that $r=0$, $(F_2^2)=-1$, $(\hat{F_2})^2=-h^0(L)^2$, and $h^0(L)=2$. In this case, $F_2$ is supported on the exceptional locus of $S'\rightarrow S$   and $\hat{S'}$ is birational onto a hypersurface $W$ of $\hat{A}\times \mathbb P^1$. Since $\deg \hat{a}=\deg a_S=3$ in this case, $W\in |M\boxtimes \cO_{\mathbb P^1}(3)|$, where $M$ is a line bundle on $\hat{A}$. Moreover, from the construction of $\hat{\Phi}'$, we know that $\hat{\Phi}'^*(L^-\boxtimes \cO_{\mathbb P^1}(1))=\hat{\rho}^*K_{\hat{S}}-\hat{F_2}$. Since $\hat{S}'$ is a smooth model of $W$ and $W$ is Gorenstein, $K_{\hat{S}'}$ is a subsheaf of $\hat{\Phi}'^*(M\boxtimes \cO_{\mathbb P^1}(1))$. Thus $\hat{\rho}^*\hat{a}^*(M-L^-)-\hat{F_2}$ is effective. Thus $M-L^-$ is a non-zero effective line bundle on $\hat{A}$. On the other hand, $$5h^0(L)^2=(\hat{\rho}^*K_{\hat{S}}-\hat{F_2})^2=(L^-\boxtimes \cO_{\mathbb P^1}(1))^2\cdot (M\boxtimes \cO_{\mathbb P^1}(3)).$$ We deduce that $(L^-\cdot M)=4$ and thus by Hodge index, $M\equiv L^-$ and we have a contradiction.

\end{proof}

This proposition is quite important in the classification of minimal surfaces with $p_g=q=2$ and $K^2=6$. We will  apply it to verify \cite[Assumption 0.3]{AC} in the study of AC3 surfaces.

The structure of the eventual paracanonical morphism $$\hat{\Phi}: \hat{S}\rightarrow \mathbb P_{\hat{A}}(\varphi_L^*\cM)\subset \hat{A}\times \mathbb P^{h^0(L)-1} $$ plays an important role in the classification of $S$. Let $\psi: \hat{S}\rightarrow \mathbb P^{h^0(L)-1}$ be the natural morphism. We denote by $\eta=\psi^*\cO_{\mathbb P^{h-1}}(1)$ and thus $\eta=K_{\hat{S}}-\hat{a}^*L^-$.
 Note that $(\eta^2)=2(1-r)h^0(L)\geq 0$. Thus the following lemma is clear.
 \begin{lemm} We have $0\leq r\leq 1$ and $\eta$ is big iff $r=0$ and the image of $\psi$ is a curve when $r=1$.
 \end{lemm}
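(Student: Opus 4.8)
The plan is to read off everything from the intersection numbers already computed and the definition of $\eta$. We have $\eta = K_{\hat S} - \hat a^* L^-$, and since $L^- = (-1_{\hat A})^*L$ is algebraically equivalent to $L$, the numbers from Lemma~\ref{crf1} and the discussion in Section~4 give $(\eta^2) = (K_{\hat S}^2) - 2(K_{\hat S}\cdot \hat a^* L^-) + (\hat a^* L^-)^2$. Using $(K_{\hat S}\cdot\hat a^* L^-) = (K_{\hat S}\cdot \hat a^* L) = 4h^0(L)^2$, $(\hat a^* L^-)^2 = (\deg\hat a)(L^-)^2 = (\deg\hat a)\cdot 2h^0(L)$, and $\deg\hat a = 1 + h^0(L) - r$ with $r = \mathrm{length}(Z)$, one computes $(\eta^2) = 2(1-r)h^0(L)$, exactly as recorded just before the statement. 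Since $\eta$ is the pullback $\psi^*\cO_{\mathbb P^{h^0(L)-1}}(1)$ of a hyperplane class, it is a nef divisor on the surface $\hat S$, hence $(\eta^2)\geq 0$; combined with $h^0(L) > 0$ this forces $1 - r \geq 0$, i.e. $0 \leq r \leq 1$.

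Next I would separate the two cases. If $r = 0$ then $(\eta^2) = 2h^0(L) > 0$, and a nef divisor with positive self-intersection on a surface is big (e.g. by asymptotic Riemann–Roch: $h^0(n\eta)$ grows like $\tfrac{n^2}{2}(\eta^2)$), so $\eta$ is big. Conversely, if $\eta$ is big then $(\eta^2) > 0$ — here I would use that a big and nef divisor has positive self-intersection, noting that $\eta$ being nef we may pass to its nef-and-big status — so $2(1-r)h^0(L) > 0$, forcing $r = 0$. This gives the first equivalence: $\eta$ is big $\iff r = 0$.

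Finally, in the case $r = 1$ we have $(\eta^2) = 0$ with $\eta$ nef and $\eta \neq 0$ (indeed $\eta\cdot(\text{something}) \neq 0$: e.g. $\eta$ is a nonzero effective-type class since $h^0(\hat S,\eta) = h^0(L)^{\vee}\neq 0$ comes from the linear subsystem $|K_{\hat S} - \hat a^* L^-|$, or one simply observes $\eta = \psi^*\cO(1)$ with $\psi$ nonconstant because $\cM$, hence $\varphi_L^*\cM$, is continuously globally generated of positive rank). A nef divisor $D$ on a surface with $(D^2) = 0$ and $D\neq 0$ has numerical dimension $1$; by the discussion of the eventual map of $K_{\hat S}\langle tL\rangle$ (Lemma~\ref{birational}) the morphism $\psi$ has connected fibers onto its image after the natural birational modifications, and since the image of $\psi$ is a nondegenerate subvariety of $\mathbb P^{h^0(L)-1}$ on which $\cO(1)$ pulls back to $\eta$ with $(\eta^2) = 0$, the image must be a curve. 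The main obstacle I anticipate is this last step — making the jump from the numerical statement $(\eta^2) = 0$ to the geometric statement ``$\Im(\psi)$ is a curve'' cleanly, since in principle $\eta$ could be nef of self-intersection zero while still having $\psi$ map onto a surface if $\eta$ were not semiample; the resolution is to note that $\eta$ is a pullback of an ample class from $\Im(\psi)$, so $(\eta^2) = 0$ directly forces $\dim\Im(\psi) \leq 1$, and $\dim\Im(\psi) = 0$ is excluded because $\eta\neq 0$.
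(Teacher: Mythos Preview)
Your proposal is correct and follows the same approach the paper intends: the paper records $(\eta^2)=2(1-r)h^0(L)$ immediately before the lemma and declares the lemma ``clear'' from this, and you have spelled out the details. The key points---$\eta=\psi^*\cO_{\mathbb P^{h^0(L)-1}}(1)$ is nef as a pullback of an ample class, so $(\eta^2)\geq 0$ gives $r\leq 1$; nef with $(\eta^2)>0$ is equivalent to big; and $(\eta^2)=0$ forces $\dim\psi(\hat S)\leq 1$ because $\eta$ is pulled back from an ample class on the image---are exactly right. Your final resolution (that $\eta$ is the pullback of an ample divisor from $\mathrm{Im}(\psi)$, so a two-dimensional image would force $(\eta^2)>0$) is the clean argument; the detour through numerical dimension and Lemma~\ref{birational} is unnecessary. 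One cosmetic slip: you wrote $h^0(\hat S,\eta)=h^0(L)^{\vee}$ where you meant $h^0(\hat S,\eta)=\dim H^0(L)^{\vee}=h^0(L)$.
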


We   have the following commutative diagram
\begin{eqnarray*}
\xymatrix{& &\mathbb P^{h^0(L)-1}\\
\hat{S}\ar[urr]^{\psi}\ar[rr]^{\hat{\Phi}}\ar[drr]_{\hat{a}} && \hat{A}\times \mathbb P^{h^0(L)-1}\ar[d]\ar[u]\\
&&\hat{A}.}
\end{eqnarray*}
\begin{lemm}\label{psi} The morphism $\psi$ is induced by the complete linear system $|\eta|$.
\end{lemm}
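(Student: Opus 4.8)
The plan is to compute $h^0(\hat S,\eta)$ cohomologically and then to check that the $h^0(L)$ linear forms on $\mathbb P^{h^0(L)-1}$ pulled back under $\psi$ already span all of $H^0(\hat S,\eta)$. Since we are in the case $K_S^2=6$, $\deg a_S>2$, Proposition \ref{eventual=canonical} applies and gives $\eta=K_{\hat S}-\hat a^*L^-$. First I would rewrite, using the projection formula together with $\hat a_*\omega_{\hat S}=\cO_{\hat A}\oplus\phi_L^*\cM$,
\[
H^0(\hat S,\eta)=H^0\bigl(\hat A,\hat a_*\omega_{\hat S}\otimes (L^-)^{-1}\bigr)=H^0\bigl(\hat A,(L^-)^{-1}\bigr)\oplus H^0\bigl(\hat A,\phi_L^*\cM\otimes (L^-)^{-1}\bigr),
\]
where the first summand vanishes because $L^-$ is ample.

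Next I would feed in the exact sequence (\ref{exact1}). Tensoring $0\to\cW\to H^0(L)^\vee\otimes L^-\to\phi_L^*\cM\to 0$ by $(L^-)^{-1}$ yields
\[
0\longrightarrow\cW\otimes (L^-)^{-1}\longrightarrow H^0(L)^\vee\otimes\cO_{\hat A}\longrightarrow\phi_L^*\cM\otimes (L^-)^{-1}\longrightarrow 0 .
\]
Since $\cW$ is a successive extension of numerically trivial line bundles, tensoring each graded piece by $(L^-)^{-1}$ produces the inverse of a line bundle numerically equivalent to the ample bundle $L$ on the abelian surface $\hat A$, which has vanishing $H^0$ and $H^1$ by Mumford's vanishing theorem; running up the filtration gives $H^0(\hat A,\cW\otimes (L^-)^{-1})=H^1(\hat A,\cW\otimes (L^-)^{-1})=0$. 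Hence the displayed sequence induces an isomorphism $H^0(L)^\vee\xrightarrow{\ \sim\ }H^0(\hat A,\phi_L^*\cM\otimes (L^-)^{-1})$; in particular $h^0(\hat S,\eta)=h^0(L)$, and every section of $\eta$ is the image of a constant section of $H^0(L)^\vee\otimes\cO_{\hat A}$.

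Finally I would match these constant sections with the linear forms defining $\psi$. By construction $\hat\Phi$ is the morphism to $\mathbb P_{\hat A}(\phi_L^*\cM)\subset\hat A\times\mathbb P^{h^0(L)-1}$ attached to the surjection $H^0(L)^\vee\otimes L^-\to\phi_L^*\cM$ of (\ref{exact1}), and $\psi$ is $\hat\Phi$ composed with the projection to $\mathbb P^{h^0(L)-1}$; so the $h^0(L)$ sections of $\eta$ that cut out $\psi$ are precisely the images of the constant sections of $H^0(L)^\vee\otimes\cO_{\hat A}$. Because the map $H^0(L)^\vee\to H^0(\hat A,\phi_L^*\cM\otimes (L^-)^{-1})$ just produced is an isomorphism, these sections are linearly independent and span $H^0(\hat S,\eta)$ (equivalently, a nonzero form vanishing on $\psi(\hat S)$ would lie in its kernel), so $\psi$ is induced by the complete linear system $|\eta|$. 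I expect the main obstacle to be exactly this last step: one has to translate the tautological description of $\psi$ coming from the projective bundle $\mathbb P_{\hat A}(\phi_L^*\cM)$ and the identification $\cO_{\hat\pi}(1)\simeq L^-\boxtimes\cO_{\mathbb P^{h^0(L)-1}}(1)$ into the cohomological language above, keeping careful track of the twist by $L^-$ and of the $\mathbb P$-bundle conventions; once that dictionary is in place, the rest is the short cohomology computation of the two previous paragraphs.
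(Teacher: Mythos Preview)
Your argument is correct and is essentially the paper's own proof, spelled out in full detail: the paper simply cites $\hat a_*\omega_{\hat S}=\cO_{\hat A}\oplus\varphi_L^*\cM$ together with the exact sequence (\ref{exact1}) to conclude $H^0(\hat S,\eta)\simeq H^0(L)^\vee$, leaving the vanishing of $H^i(\cW\otimes(L^-)^{-1})$ and the identification of the constant sections with those defining $\psi$ implicit. Your write-up makes these steps explicit, including the final dictionary step you flagged as the main obstacle, which the paper takes for granted.
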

Since $\hat{a}_*\omega_{\hat{S}}=\cO_{\hat{A}}\oplus   \varphi_L^*\cM$, and $$0\rightarrow \phi_L^*(-1_A)^*\widehat{L|_Z}^{\vee}\rightarrow H^0(L)^{\vee}\otimes L^-\rightarrow \phi_L^*\cM\rightarrow 0,$$ we have $H^0(\hat{S}, \eta)=H^0(\hat{S}, K_{\hat{S}}-\hat{a}^*L^-)\simeq H^0(L)^{\vee}$.
\begin{prop}\label{K_L}
The  morphism $\psi$ is $\mathcal K_L$-equivariant and the image of $\psi$ is  $K_L$-invariant. Moreover,  the representation of $\mathcal K_L$ on $H^0(\eta)$ is isomorphic to the dual of the Schr\"odinger representation.
\end{prop}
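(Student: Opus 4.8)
The plan is to trace the $\mathcal{K}_L$-action through the construction of $\psi$. The starting point is that $H^0(\hat{S},\eta) \simeq H^0(L)^{\vee}$, which was established just before the statement from the short exact sequence $0 \to \phi_L^*(-1_A)^*\widehat{L|_Z}^{\vee} \to H^0(L)^{\vee} \otimes L^- \to \phi_L^*\cM \to 0$ together with $\hat{a}_*\omega_{\hat{S}} = \cO_{\hat{A}} \oplus \varphi_L^*\cM$. The key observation is that everything in this construction is natural with respect to the Heisenberg symmetry: the line bundle $L$ carries a canonical $\mathcal{K}_L$-linearization by the very definition of $\mathcal{K}_L$ as the group of pairs $(\chi,a)$ with $\chi\colon L \xrightarrow{\sim} t_a^*L$, $a \in K_L$; this linearization induces the Schr\"odinger representation on $H^0(\hat{A},L)$ by the recollections in Subsection 2.6. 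I would first record that the isogeny $\phi_L\colon \hat{A} \to A$ and hence the fiber product $\hat{S} = S \times_A \hat{A}$ are $K_L$-equivariant in the natural way (with $K_L$ acting trivially on $S$ and $A$, and by translation on $\hat{A}$), so $\hat{a}\colon \hat{S} \to \hat{A}$ is $K_L$-equivariant. Consequently $\hat{a}_*\omega_{\hat{S}}$ — and its Chen-Jiang summand $\varphi_L^*\cM$, which is $\mathcal{K}_L$-linearized since $\cM$ descends to $A$ — inherits a $\mathcal{K}_L$-linearization, and the surjection $H^0(L)^{\vee} \otimes L^- \to \phi_L^*\cM$ is $\mathcal{K}_L$-equivariant because it comes from the canonical evaluation-type map in (\ref{exact1}).

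Next I would push this through to the projective bundle. The surjection $H^0(L)^{\vee} \otimes L^- \twoheadrightarrow \phi_L^*\cM$ realizes $\mathbb{P}_{\hat{A}}(\varphi_L^*\cM)$ as a closed subscheme of $\hat{A} \times \mathbb{P}(H^0(L)^{\vee}) = \hat{A} \times \mathbb{P}^{h^0(L)-1}$, and since the surjection is $\mathcal{K}_L$-equivariant, this embedding is equivariant for the action of $\mathcal{K}_L$ on the target through its action on $H^0(L)^{\vee}$ (which is the dual Schr\"odinger representation) on the $\mathbb{P}^{h^0(L)-1}$ factor and through $K_L$ by translation on $\hat{A}$. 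The eventual paracanonical morphism $\hat{\Phi}\colon \hat{S} \to \mathbb{P}_{\hat{A}}(\varphi_L^*\cM)$ is intrinsic (it is the relative evaluation map of the M-regular summand, by Subsection 2.4), hence $\mathcal{K}_L$-equivariant; composing with the projection to $\mathbb{P}^{h^0(L)-1}$ gives that $\psi$ is $\mathcal{K}_L$-equivariant, and since $\mu_{\delta_2}$ acts as scalars on $H^0(L)^{\vee}$ it acts trivially on $\mathbb{P}^{h^0(L)-1}$, so the image of $\psi$ is genuinely $K_L$-invariant. The identification of the representation on $H^0(\eta)$ with the dual Schr\"odinger representation then follows from Lemma \ref{psi}: $\psi$ is given by the complete linear system $|\eta|$, so $H^0(\eta)^{\vee}$ is identified $\mathcal{K}_L$-equivariantly with the span of the image in $H^0(L)^{\vee}$, and the isomorphism $H^0(\hat{S},\eta) \simeq H^0(L)^{\vee}$ above is manifestly $\mathcal{K}_L$-equivariant because it is read off from the equivariant sequence (\ref{exact1}).

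The main obstacle, and the point deserving the most care, is checking that all the identifications genuinely respect the $\mathcal{K}_L$-linearization rather than merely the underlying $K_L$-action — in particular that the linearization of $\phi_L^*\cM$ coming from descent along $\phi_L$ agrees (up to the harmless central $\mu_{\delta_2}$) with the one induced from the canonical linearization of $L^-$ via the surjection in (\ref{exact1}). This amounts to unwinding that $\cM$ itself is the M-regular summand of the intrinsically-defined sheaf $a_{S*}\omega_S$ on $A$, so its pullback $\phi_L^*\cM$ carries the linearization for which the isogeny descent datum is the standard theta-group one; I would verify this by noting that $(-1_A)^*\widehat{L}$ is $\mathcal{K}_L$-linearized (Fourier-Mukai of a theta-group-linearized bundle), that $\phi_L^*((-1_A)^*\widehat{L}^{\vee}) \simeq H^0(L)^{\vee}\otimes L^-$ equivariantly, and that (\ref{exact}) expresses $\cM$ as the cokernel of an equivariant map, so (\ref{exact1}) is equivariant throughout. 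A small additional subtlety is the passage $\eta = K_{\hat{S}} - \hat{a}^*L^-$: since $K_{\hat{S}}$ is canonical it is $K_L$-linearized (the $K_L$-action on $\hat{S}$ is free, being a base change of the free translation action, so there is no ambiguity beyond a character), and $\hat{a}^*L^-$ carries the pullback of the theta-group linearization of $L^-$; the difference of linearizations is a character of $\mathcal{K}_L$, which does not affect the projective statement and is absorbed when one normalizes against $H^0(L)^{\vee}$. Once these compatibilities are in place, the conclusion is immediate.
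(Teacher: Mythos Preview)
Your argument is correct, and the first half (equivariance of $\psi$ and $K_L$-invariance of its image) runs parallel to the paper's: both linearize $\eta=K_{\hat S}-\hat a^*L^-$ by combining the free $K_L$-action on $\hat S$ with the theta-group linearization on $L^-$, and then observe that the center acts by scalars.

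For the identification of the representation on $H^0(\eta)$, however, your route differs from the paper's. You trace $\mathcal K_L$-equivariance directly through the isomorphism $\phi_L^*((-1_A)^*\widehat L^{\vee})\simeq H^0(L)^\vee\otimes L^-$ and the sequence (\ref{exact1}), which forces you to confront the compatibility of two linearizations on $\phi_L^*\cM$ (descent versus quotient of $H^0(L)^\vee\otimes L^-$) and the residual character ambiguity on $\eta$. The paper sidesteps all of this with a Schur-type argument: for general $Q\in\Pic^0(A)$ one has a $\mathcal K_L$-equivariant tensor decomposition $H^0(\hat S,K_{\hat S}\otimes\hat a^*\varphi_L^*Q)\simeq H^0(\hat S,\eta)\otimes H^0(\hat A,L^-\otimes\varphi_L^*Q)$, and the one-dimensional space $H^0(S,K_S\otimes a_S^*Q)$ furnishes a unique $\mathcal K_L$-invariant line in this tensor product. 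Since $H^0(\hat A,L^-\otimes\varphi_L^*Q)$ is the Schr\"odinger representation and has the same dimension as $H^0(\eta)$, the existence of a one-dimensional invariant subspace forces $H^0(\eta)$ to be its dual. This is cleaner because it requires no bookkeeping of linearizations beyond the standard fact that $H^0$ of a translate of $L$ is Schr\"odinger; your approach, while valid, buys generality (it would adapt to situations where the invariant-vector count is less transparent) at the cost of the compatibility checks you flagged.
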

\begin{proof} We have already seen in Lemma \ref{psi} that $\psi$ is induced by the complete linear series $|K_{\hat{S}}-\hat{a}^*L^-|$. Note that $\mathcal K_L$ acts on $\hat{a}^*L^-$   via the action of $\mathcal K_L$ on $L^-$. Since $\hat{S}\rightarrow S$ is a $K_L$-\'etale cover, we have the regular action of $K_L$ on $K_{\hat{S}}$, which induces an action of $\mathcal K_L$ on $K_{\hat{S}} $. Thus we have a linear action of $\mathcal K_L$ on $\eta$. Thus the  morphism $\psi$ is $\mathcal K_L$-equivariant and since the kernel of $\mathcal K_L\rightarrow K_L$ acts on $H^0(\eta)$ by multiplication-by-scalar, the image of $\psi$  is  $K_L$-invariant. We remark that since $\deg a_S\geq 3$, $h^0(L)\geq 2$.

Since $\hat{a}_*\omega_{\hat{S}}=\cO_{\hat{A}}\oplus \varphi_L^*\cM$,
$$H^0(K_{\hat{S}}\otimes \hat{a}^*\varphi_L^*Q)=H^0(\varphi_L^*\cM\otimes Q)\simeq H^0(\hat{S}, \eta)\otimes \hat{a}^*H^0(\hat{A}, L^-\otimes\varphi_L^*Q)$$ for $Q\in \Pic^0(A)$ general, where the second isomorphism holds because of the short exact sequence (\ref{exact1}).

There exists a unique $\mathcal K_L$-invariant section of $$H^0(\hat{S}, \eta)\otimes \hat{a}^*H^0(\hat{A}, L^-\otimes\varphi_L^*Q),$$ which descends to the unique section (up to scalar) of $$H^0(\hat{A}, \varphi_L^*\cM\otimes\varphi_L^*Q)^{K_L}=H^0(A, \cM\otimes Q)= H^0(K_S\otimes a_S^*Q).$$ Since the representation of $\mathcal K_L$  on $H^0(\hat{A}, L^-\otimes\varphi_L^*Q)$  is isomorphic to the Schr\"odinger representation, the representation of $\mathcal K_L$ on $H^0(\eta)$ is isomorphic to the dual of the Schr\"odinger representation.
\end{proof}
\subsection{The case where $r>0$}
\begin{theo}\label{AC3}
When $r=1$, we have $h^0(L)=3$, the image of $\psi$ is a genus $1$ curve $C\subset \mathbb P^2$, and $S$ belongs to the AC3 family.

\end{theo}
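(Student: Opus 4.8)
The plan is to work in the setting where $r = \mathrm{length}(Z) = 1$, so that $\deg a_S = h^0(L)$ and, by the preceding lemma, $(\eta^2) = 0$ while $\eta$ is not big; hence the image $C$ of $\psi\colon \hat S \to \mathbb P^{h^0(L)-1}$ is a curve. First I would pin down $h^0(L)$. Since the representation of $\mathcal K_L$ on $H^0(\eta)$ is the dual of the Schrödinger representation (Proposition \ref{K_L}), it is irreducible of dimension $h^0(L)$; in particular $K_L$, acting through $\mathcal K_L$, has no nonzero invariant vectors and no invariant proper linear subspace. The curve $C$ is $K_L$-invariant and spans $\mathbb P^{h^0(L)-1}$. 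The key numerical input is Lemma \ref{crf1}, $(K_{\hat S}\cdot \hat a^*L) = 4h^0(L)^2$, together with $(\eta \cdot \hat a^*L) = (K_{\hat S}\cdot \hat a^*L) - (\hat a^*L^-\cdot \hat a^*L) = 4h^0(L)^2 - 2h^0(L)\deg\hat a = 4h^0(L)^2 - 2h^0(L)^2 = 2h^0(L)^2$. Writing $\psi\colon \hat S \to C \hookrightarrow \mathbb P^{h^0(L)-1}$ and letting $\nu\colon C'\to C$ be the normalization, $\eta = \psi^* H$ where $H = \cO_{\mathbb P}(1)|_C$, so $\deg H = \deg C$ and the fibers of $\psi$ are the preimages of points; $(\eta\cdot \hat a^*L) = (\deg\psi)\cdot (\text{a fixed number } = (\text{fiber}\cdot \hat a^*L))\cdot \deg C$ should be analyzed, but the cleaner route is: a general fiber $\Gamma$ of $\psi$ satisfies $(\Gamma \cdot \hat a^* L) = (\eta\cdot \hat a^* L)/\deg C = 2h^0(L)^2/\deg C$, and $\hat a|_\Gamma\colon \Gamma \to \hat A$ is generically finite onto a curve (since $\hat a$ is generically finite on $\hat S$ and $\Gamma$ is not contracted), so $(\Gamma\cdot \hat a^*L) \geq$ (the minimal $L$-degree of a curve through the image), giving an upper bound on $\deg C$.

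The main step is to show $\deg C = h^0(L)$, i.e. that $C$ is a curve of minimal degree (a rational normal curve or an elliptic normal curve) in $\mathbb P^{h^0(L)-1}$. Since $C$ spans $\mathbb P^{h^0(L)-1}$ and is irreducible and nondegenerate, $\deg C \geq h^0(L) - 1$; and $\deg C = h^0(L)-1$ forces $C$ rational, which I would exclude using the $\mathcal K_L$-action: a rational normal curve has automorphism group $\mathrm{PGL}_2$ acting as $\mathrm{Sym}^{h^0(L)-1}$ of the standard representation, and one checks this cannot contain a copy of the (central extension of the) Heisenberg group $K_L \simeq (\mathbb Z_{\delta_1}\times \mathbb Z_{\delta_2})^2$ acting irreducibly via Schrödinger for $h^0(L) = \delta_1\delta_2 \geq 2$ — the Heisenberg group has irreducible representations only in the "right" dimension and the binary forms don't accommodate it. Hence $\deg C \geq h^0(L)$. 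For the reverse inequality I would feed the bound $(\Gamma\cdot \hat a^*L)$ from the first paragraph back in: combined with $\chi(\omega_{\hat S}) = h^0(L)^2$, $(K_{\hat S}^2) = 6h^0(L)^2$ and the Hodge index / adjunction on $\hat S$, together with $g(C') \geq 1$ (which follows since $q(\hat S) = q(S) + $ something $\geq 2$ and $C$ is not rational by the above), a numerical squeeze should force $\deg C = h^0(L)$ and $g(C') = 1$. Thus $C$ is an elliptic normal curve of degree $h^0(L) = \delta_1\delta_2$ in $\mathbb P^{h^0(L)-1}$.

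Next I would rule out $h^0(L) > 3$. An elliptic normal curve $C$ of degree $d = \delta_1\delta_2$ in $\mathbb P^{d-1}$ admitting a faithful $K_L$-action (through translations by $d$-torsion on the elliptic curve, in the projective embedding given by a degree-$d$ line bundle) exists for every $d$, so the exclusion must come from the surface side. Here I would use $(\eta^2) = 0$: the general fiber $\Gamma$ of $\psi$ moves in a base-point-free pencil-like family (at least after the Stein factorization $\hat S \to \hat C \to C$), so $\hat S$ is fibered over a curve $\hat C$ dominating $C$ with $g(\hat C) \geq 1$; analyzing $\hat a|_\Gamma$ and the induced map on Albanese varieties, together with $K_L$-equivariance, should show that the fibration $\hat S \to \hat C$ has fiber genus and base genus too small to accommodate $\deg a_S = h^0(L) \geq 4$ while keeping $K_S^2 = 6$ — essentially the same Xiao-type / slope considerations used in the proof of Theorem 3.1 and the upcoming $\deg a_S \le 7$ bound will cut $h^0(L)$ down to $3$. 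The hardest part will be precisely this last exclusion: controlling the fibration $\hat S \to \hat C$ finely enough — its base genus, its fiber genus, and the way $K_L$ permutes fibers versus acts on a fixed fiber — to eliminate all $h^0(L)\geq 4$ cases without a long case analysis. Once $h^0(L) = 3$ is established, $C$ is a smooth plane cubic, $\deg a_S = 3$, $\cM$ is determined (it is forced by the exact sequence (\ref{exact1}) with $\mathrm{length}(Z) = 1$ and the Schrödinger structure), and identifying $\hat S \subset C \times \hat A$ with the $\mathcal K_L$-equivariant incidence/defining equations reproduces exactly the AC3 construction of the preliminaries; the plane cubic must be the Hesse cubic $y_0^2y_1 + y_1^2 y_2 + y_2^2 y_0 = 0$ because it is the unique $K_L$-invariant smooth cubic for the chosen Schrödinger action. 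This yields that $S = \hat S/K_L$ is an AC3 surface, completing the proof.
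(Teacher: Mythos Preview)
Your proposal has the right outline --- show the image of $\psi$ is a curve $C$, control its genus and degree, then use the $K_L$--action to pin down $h^0(L)=3$ --- but it misses the single key idea that makes the argument work, and the substitute you propose (``Xiao-type slope considerations on the surface side'') is both vague and unnecessary.

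The decisive observation is this: once $g(C)=1$, the surface $\hat S$ is birational to an ample divisor in the abelian threefold $\hat A\times C$, so $q(\hat S)=3$. Since $\hat S\to S$ is the $K_L$--\'etale cover and $q(S)=2$, the $K_L$--invariants in $H^0(\hat S,\Omega^1_{\hat S})$ are $2$--dimensional; the extra holomorphic $1$--form on $\hat S$ is pulled back from $C$ and therefore transforms by a \emph{nontrivial} character of $K_L$. Equivalently, $C/K_L$ is rational. This rules out precisely the case you explicitly allowed, namely $K_L$ acting on $C$ by translations: translations act trivially on $H^0(C,\Omega^1_C)$ and give an elliptic quotient. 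With $C/K_L\cong\mathbb P^1$, the abelian group $K_L\simeq(\mathbb Z_{\delta_1}\times\mathbb Z_{\delta_2})^2$ must embed in $\mathrm{Aut}(C)\cong C\rtimes\mathbb Z_k$ ($k\in\{2,4,6\}$) with nontrivial image in the $\mathbb Z_k$--factor; the centralizer of any non-translation is finite, and an easy check leaves only $K_L\simeq(\mathbb Z_2)^2$ or $(\mathbb Z_3)^2$, i.e.\ $h^0(L)\in\{2,3\}$. Since $\deg a_S=h^0(L)\ge 3$, you get $h^0(L)=3$ immediately. No slope or fibration analysis on $\hat S$ is needed.

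There are two secondary gaps. First, you never bound $g(C)$ from above; the paper uses $q(\hat S)\le 3$ (from the short exact sequence $0\to P\to H^0(\eta)\otimes L^-\to\varphi_L^*\cM\to 0$) together with the universal property of the Albanese of $\hat S$ to get $g(C)\le 2$, and then excludes $g(C)=2$ by showing $\hat S$ would be birational to an elliptic bundle over $C$. Second, your ``numerical squeeze'' for $\deg C=h^0(L)$ is not carried out; once $g(C)=1$, this follows directly from Riemann--Roch on $C$ since $\psi$ is given by the complete linear system $|\eta|$ and $h^0(\eta)=h^0(L)$. Finally, the Hesse cubic is not the unique $K_L$--invariant plane cubic (there is a pencil); the identification of $S$ as an AC3 surface goes through \cite[Theorem~0.8]{AC} after verifying that the image of $\hat\Phi$ in $\hat A\times\mathbb P^2$ is the canonical model of $\hat S$, which is an extra step you have not addressed.
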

\begin{proof} Let $C$ be the normalization of the image of $\psi$. We then have a factorization of morphism $$\psi: \hat{S}\rightarrow C\rightarrow \mathbb P^{h^0(L)-1}$$ and $K_L$ acts faithfully on $C$ too.

 Then $\hat{S}$ is the minimal smooth model of a divisor of $\hat{A}\times C$.

Since $r=1$, $Z$ is a reduced point and we have a short exact sequence
\begin{eqnarray}\label{divisor-sequence}0\rightarrow P\rightarrow H^0(\eta)\otimes L^-\rightarrow \varphi_L^*\cM\rightarrow 0,\end{eqnarray} where $P\in \Pic^0(\hat{A})$ is a torsion line bundle.
In particular,
\begin{equation*}
q(\hat{S})=\begin{cases}
3, &\text{if $P$ is trivial},\\
2, &\text{if $P$ is non-trivial}.
\end{cases}
\end{equation*}
  Since  $\hat{a}$ is generically finite,  the Albanese morphism $a_{\hat{S}}: \hat{S}\rightarrow A_{\hat{S}}$ of $\hat{S}$ is also generically finite onto its image. Note that the morphism $\hat{S}\rightarrow C\subset JC$ factors through $a_{\hat{S}}$ and $A_{\hat{S}}\rightarrow JC$ is surjective. Thus  $g(C)\leq 2$.
Moreover, if $g(C)=2$, by the universal property of $a_{\hat{S}}$, $q(\hat{S})=3$ and we have the following commutative diagram
\begin{eqnarray*}
\xymatrix{
\hat{S}\ar[dr]\ar[r] & a_{\hat{S}}(\hat{S})\ar[d]\ar@{^{(}->}[r] &A_{\hat{S}}\ar[d]\\
& C\ar@{^{(}->}[r] & JC}.
\end{eqnarray*}
We then conclude that the image of $a_{\hat{S}}$ is an elliptic curve bundle over $C$. On the other hand, the morphism $\Phi: \hat{S}\rightarrow \hat{A}\times C\rightarrow \hat{A}\times \mathbb P^{h^0(L)-1}$ is birational onto its image. We conclude that $\hat{S}$ is birational to an elliptic bundle over $C$, which is a contradiction.

Thus $g(C)\leq 1$.

If $C=\mathbb P^1$, then $K_L$ is a subgroup of $\mathrm{Aut}(C)=\mathrm{PGL}_2$. We  recall that the automorphism group $\mathrm{PGL}_2$  has only $\mathbb Z_r$, $D_r$, $A_4$, $S_4$ and $A_5$  as finite subgroups (see \cite{B2}). Since $h^0(L)\geq 3$, $K_L$ is a non-cyclic abelian group of order $h^0(L)^2$, we then get a contradiction.


 We then assume that $C$ is of genus $1$. In this case, $C\subset \mathbb P^{h^0(L)-1}$ and $\deg_C(\cO_{\mathbb P^{h^0(L)-1} }(1))=h^0(L)\geq 3$.

Since $\hat{S}$ is birational to an ample divisor of the abelian threefold $\hat{A}\times C$, $q(\hat{S})=3$ and hence in (\ref{divisor-sequence}), $P=\cO_{\hat{A}}$ is trivial.

 Note that $C\subset \mathbb P^{h^0(L)-1}$ is $K_L$-invariant. From the  Schr\"odinger representation, we see that $K_L$ acts on $C$ faithfully and hence $K_L$ is a subgroup of $\mathrm{Aut}(C)$. Since $q(S)=2$,  $C/K_L$ is rational.

  We know that $ \mathrm{Aut}(C)$ is the semi-product of $C$ with
  \begin{equation*}
   \begin{cases}\langle \sqrt{-1}\rangle \simeq\mathbb Z_4, & \text{if $C\simeq \mathbb C/(\mathbb Z\oplus \mathbb Z\sqrt{-1})$}, \\
   \langle -e^{\frac{2\pi i}{3}}\rangle\simeq \mathbb Z_6, & \text{if $C\simeq \mathbb C/(\mathbb Z\oplus\mathbb Ze^{\frac{2\pi i}{3}})$},\\
   \langle -1\rangle \simeq \mathbb Z_2, & \text{otherwise}.
  \end{cases}
  \end{equation*}
  We then verify that the only possibility is that $K_L\simeq \mathbb Z_2\times \mathbb Z_2$ or $\mathbb Z_3\times \mathbb Z_3$. Since $h^0(L)\geq 3$, $h^0(L)=3$.

We now show that the image $W$ of $\hat{\Phi}$ is the canonical model of $\hat{S}$, which verifies \cite[Assumption 0.3]{AC} for $S$.
We have already seen that $\hat{\Phi}: \hat{S}\rightarrow W$ is a finite birational morphism and $\hat{\Phi}^*((L^-\boxtimes \cO_{\mathbb P^2}(1))|_W)=K_{\hat{S}}$, it suffices to show that $W$ is normal, Gorenstein, and $K_W=(L^-\boxtimes \cO_{\mathbb P^2}(1))|_W$.
Indeed, $W$ is contained in the two divisors $D_1:=\hat{A}\times C\subset \hat{A}\times \mathbb P^2$ and $D_2:=\mathbb P_{\hat{A}}(\varphi_L^*\cM)\subset \hat{A}\times \mathbb P^2$, which is a divisor in $|L^-\boxtimes \cO_{\mathbb P^{2}}(1)|$ corresponding to the short exact sequence (\ref{divisor-sequence}). Thus, as a divisor in the smooth abelian $3$-fold $D_1$, $W$ is Gorenstein and $K_W$ is a subsheaf of $D_2|_{W}=(L^-\boxtimes \cO_{\mathbb P^2}(1))|_W$. Moreover, since $\hat{S}$ is birational to $W$, $K_{\hat{S}}\subset\hat{\Phi}^*K_W$ and equality holds iff $W$ is normal. We then conclude that $K_W=(L^-\boxtimes \cO_{\mathbb P^2}(1))|_W$, $W$ is normal, and thus $W$ is the canonical model of $\hat{S}$.

  By \cite[Theorem 0.8]{AC} and Proposition \ref{K_L}, we conclude that $S$ is an AC3-surface.

\end{proof}
\subsection{Boundedness of $h^0(L)$}

In this subsection, we assume that $r=0$ and hence $\varphi_L^*\cM=H^0(L)^{\vee}\otimes L^-$ is locally free,
$(\eta^2)=2h^0(L)>0$. Thus $h^0(L)\geq 3$.
 Moreover, $(\hat{a}^*L)^2=2h^0(L)(h^0(L)+1)$ and $(\hat{a}^*L\cdot \eta)=2h^0(L)(h^0(L)-1)$. We also note that $q(\hat{S})=2$ and thus $\hat{a}$ is the Albanese morphism of $\hat{S}$. Let $W\subset \mathbb P^{h^0(L)-1}$ be the image of $\psi$. We now list the possible structure of $W$.
\begin{theo}\label{boundness}
Assume that $r=0$, we have $h^0(L)\leq 6.$ More precisely, there are the following possibilities  of $\psi$:
\begin{itemize}
\item $h^0(L)=3$ and $\psi: \hat{S}\rightarrow W=\mathbb P^2$ is of degree $6$;
\item $h^0(L)=4$ and $\psi:  \hat{S}\rightarrow W\subset \mathbb P^3$ is birational onto an octic;
\item $h^0(L)=4$ and $\psi:  \hat{S}\rightarrow W\subset \mathbb P^3$ is of degree $2$ over a quartic;
\item $h^0(L)=4$ and $\psi:  \hat{S}\rightarrow W\subset \mathbb P^3$ is of degree $4$ onto a quadric;
\item $h^0(L)=6$ and $\psi:  \hat{S}\rightarrow W\subset \mathbb P^5$ is of degree $3$ onto a surface of degree $4$.

\end{itemize}
\end{theo}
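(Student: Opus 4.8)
The plan runs as follows. Write $h:=h^0(L)$; then $h\ge 3$, $\deg\hat a=\deg a_S=h+1$, and by Lemma~\ref{crf1} and Proposition~\ref{eventual=canonical} we have $(K_{\hat S}^2)=6h^2$, $(K_{\hat S}\cdot\hat a^*L)=4h^2$, $(\hat a^*L)^2=2h(h+1)$, hence $(\eta^2)=2h$, $(\eta\cdot\hat a^*L^-)=2h(h-1)$ and $(K_{\hat S}\cdot\eta)=2h^2$ since $L^-\equiv L$ and $\eta=K_{\hat S}-\hat a^*L^-$. By Proposition~\ref{eventual=canonical} the paracanonical map of $\hat S$ is a morphism, and since $r=0$ the bundle $\varphi_L^*\cM=H^0(L)^\vee\otimes L^-$ is locally free; thus $\psi$ is the morphism attached to the base-point-free complete system $|\eta|$, $W:=\psi(\hat S)\subset\mathbb P^{h-1}$ is nondegenerate, and $(\eta^2)>0$ makes $W$ a surface. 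Put $d:=\deg\psi$, $e:=\deg W$, so $de=2h$; a general $C_0\in|\eta|$ is smooth irreducible of genus $g(C_0)=1+\tfrac12(K_{\hat S}+\eta)\cdot\eta=h^2+h+1$, mapped by $\psi$ with degree $d$ onto a nondegenerate irreducible curve of degree $e$ in a hyperplane $\mathbb P^{h-2}$. Since a nondegenerate surface in $\mathbb P^{h-1}$ has degree $\ge h-2$, one gets $e\ge h-2$, hence $d\le 2h/(h-2)$; combined with $d\mid 2h$ and the fact that a surface in $\mathbb P^2$ is $\mathbb P^2$, this restricts $(h,d,e)$ to the five triples of the statement together with the families $(h,1,2h)$ and $(h,2,h)$ for $h\ge 5$, which are what has to be excluded.

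The case $d=1$ (which already forces $h\ge 4$) is disposed of by Castelnuovo's theory: for general $C_0$ the map $\psi|_{C_0}$ is birational onto its image, so $g(C_0)$ equals the geometric genus of a nondegenerate curve of degree $2h$ in $\mathbb P^{h-2}$, whence $h^2+h+1\le\pi(2h,h-2)$; as $\pi(2h,h-2)$ grows only linearly in $h$ while the left side is quadratic, this fails for all $h\ge 5$ and holds with equality only for $h=4$, where $\psi(C_0)$ is a smooth plane octic and $W$ is the octic of the list. For $d=2$, $h\ge 5$ (so $e=h$) I would argue as follows. A degree-$2$ $\psi$ gives a biregular involution $\iota$ of the minimal surface $\hat S$; setting $\bar W:=\hat S/\iota$, the factorization $\psi=\bar\psi\circ\pi$ through $\pi\colon\hat S\to\bar W$ shows $\bar\psi\colon\bar W\to W\subset\mathbb P^{h-1}$ is birational, and $\bar\eta:=\bar\psi^*\cO_W(1)$ satisfies $\pi^*\bar\eta=\eta$, $(\bar\eta^2)=h$. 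Moreover $\iota$ descends to an automorphism $\iota_A$ of $\hat A=\Alb(\hat S)$ with $\hat a\iota=\iota_A\hat a$ and $\iota_A^*L^-\cong L^-$, so $\hat a$ descends to a generically finite morphism $\bar a\colon\bar W\to\hat A/\langle\iota_A\rangle$. When $\iota_A$ is the identity or has at most isolated fixed points, $\hat A/\langle\iota_A\rangle$ is an abelian surface or (the Kummer surface resolving it) a K3, so $\kappa(\hat A/\langle\iota_A\rangle)=0$ and hence $\kappa(\bar W)\ge 0$; then for a general smooth $C$ in the base-point-free system defining $\bar\psi$ (which maps birationally to a general hyperplane section $C_W\subset\mathbb P^{h-2}$ of $W$) adjunction on a resolution of $\bar W$ gives $2g(C)-2=(\bar\eta^2)+(K_{\bar W}\cdot\bar\eta)\ge h$, since the canonical class is pseudoeffective and $\bar\eta$ is nef, whence $g(C)\ge h/2+1$; but $g(C)=g^{\mathrm{geom}}(C_W)\le\pi(h,h-2)=2$ for $h\ge 5$ by Castelnuovo, forcing $h\le 2$, a contradiction. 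The remaining possibility---$\iota_A$ (hence $\iota$) with a positive-dimensional fixed locus, which forces $\hat A$ to be of product type and $\iota_A$ a special involution, so that $\bar W$ could be (uni)ruled---would be eliminated by a separate argument using the induced elliptic fibration on $\hat S$, the equality $q(\hat S)=2$, and the $\mathcal K_L$-symmetry of Proposition~\ref{K_L}, under which $K_L\simeq(\mathbb Z_{\delta_1}\times\mathbb Z_{\delta_2})^2$ acts faithfully on $W$ through the Schr\"odinger representation.

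Granting all this, $h^0(L)\le 6$ and the surviving configurations are exactly the five listed: $h=3$ gives $W=\mathbb P^2$ and $d=6$; $h=4$ gives $d\in\{1,2,4\}$, i.e. the octic, the double cover of a quartic surface, and the quadruple cover of a quadric; $h=5$ does not occur; $h=6$ leaves only $d=3$ onto a surface of minimal degree $4$ in $\mathbb P^5$; and $h\ge 7$ is impossible. The step I expect to be the main obstacle is the degenerate sub-case of the $\deg\psi=2$ analysis, where $\iota$ acquires a fixed curve and $\kappa(\bar W)\ge 0$ is no longer automatic: one must then extract a contradiction from the very special geometry imposed on $\hat S$ and on the polarized abelian surface $(\hat A,L^-)$, and this is precisely where the finite Heisenberg group does the decisive work.
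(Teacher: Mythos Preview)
Your treatment of the cases $d\ge 3$ and $d=1$ is correct and coincides with the paper's: the degree bound $e\ge h-2$ plus $de=2h$ handles $d\ge 3$, and Castelnuovo's genus bound for a nondegenerate curve of degree $2h$ in $\mathbb P^{h-2}$ versus $g(C_0)=h^2+h+1$ forces $h=4$ when $d=1$.

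The genuine gap is in the $d=2$ analysis, and your proposed resolution of it is wrong. Your ``isolated fixed points'' sub-case is fine: when $\iota_A$ has only isolated (or no) fixed points one indeed has $\kappa(\hat A/\langle\iota_A\rangle)=0$, hence $\kappa(\bar W)\ge 0$, and then the Castelnuovo bound $\pi(h,h-2)=2$ against $g(C)\ge h/2+1$ gives the contradiction. But the ``$\iota_A$ has a fixed curve'' sub-case is left open, and here your diagnosis is off: the Heisenberg symmetry of Proposition~\ref{K_L} is \emph{not} what closes this case in the paper. The Heisenberg action is used only afterwards, in Lemma~\ref{symmetry} and its companions, to exclude the surviving configurations $h=6$ and several $h=4$ possibilities; it plays no role in the boundedness theorem itself.

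The paper's $d=2$ argument runs along a different dichotomy: whether $\bar W=\hat S/\langle\tau\rangle$ is of general type or not. In the general-type case a hyperplane section $D\in|\eta'|$ has $\eta'|_D$ special (since $K_D=(K_{\bar W}+\eta')|_D$), and Clifford gives $h=\deg(\eta'|_D)\ge 2(h-2)$, so $h\le 4$. In the non-general-type case the key new observation is that the morphism defined by $|\hat a^*L^-|$ on $\hat S$ also factors through $\psi$; since this linear system separates the points of a general fibre of $\hat a$, the map $|L^-|$ on $\hat A$ cannot be birational. Reider's theorem then forces either $h\le 4$ or the existence of an elliptic curve $E\subset\hat A$ with $(L^-\cdot E)\le 2$; in the latter situation one analyses the induced elliptic fibration $\hat S\to\hat A/E$, shows via Clifford on the fibres that $(L^-\cdot E)=2$, and then, using that $\hat A/\langle\tau_A\rangle$ is a $\mathbb P^1$-bundle over $\hat A/E$, computes $h^1(\eta')=h$ and applies Clifford once more to $\eta'|_D$ to conclude $h\le 4$. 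This last step is precisely the situation corresponding to your missing ``fixed curve'' sub-case, so the tool you need there is Reider plus Clifford on a suitable curve, not the Schr\"odinger representation.
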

\begin{proof}\
Since $W$ is non-degenerate, $\deg(W)\geq h^0(L)-2$.

 If $\deg(\hat{S}/W)\geq 3$, we have $$2h^0(L)=(\eta^2)=\deg(\hat{S}/W)\deg(W)\geq 3(h^0(L)-2).$$ Thus $h^0(L)\leq 6$. The only possible cases are that $h^0(L)=6$ and $\psi$ is of degree $3$ onto its image or  $h^0(L)=4$ and $\psi$ is of degree $4$ onto its image or  $h^0(L)=3$ and $\psi$ is of degree $6$ onto its image.

If $\psi: \hat{S}\rightarrow W\subset \mathbb P^{h^0(L)-1}$  is birational onto its image, then $h^0(L)\geq 4$. Let $C$ be a general hyperplane section of $\eta$. Since $(K_{\hat{S}}\cdot \eta)=2h^0(L)^2$, $g(C)=h^0(L)^2+h^0(L)+1$.
By the proof of \cite[Lemma 5.1]{B}, we have $\frac{2h^0(L)+4}{3}\geq h^0(L)-1$ and hence $h^0(L)\leq 7$. To be more precisely, we apply Castelnuovo's analysis (see \cite[Page 116]{ACGH}). Let $m:=\lfloor \frac{2h^0(L)-1}{h^0(L)-3} \rfloor=2+\lfloor \frac{5}{h^0(L)-3}\rfloor$ and write $$2h^0(L)-1=m(h^0(L)-3)+\epsilon.$$  Then Castelnuovo's inequality implies that $$h^0(L)^2+h^0(L)+1=g(C)\leq \frac{m(m-1)}{2}(h^0(L)-3)+m\epsilon.$$ We then verify that it is only possible that $h^0(L)=4$ and in this case $C$ is isomorphic to a smooth plane octic.

 In the rest of the proof,  we assume that $\psi: \hat{S}\rightarrow W$ is of degree $2$ and then $\deg(W)=h^0(L)$. We denote by $\tau$ the corresponding involution on $\hat{S}$ and we have $$\hat{S}\xrightarrow{\iota} \hat{S}/\langle \tau\rangle\xrightarrow{\rho} W\subset \mathbb P^{h-1}.$$ If $\hat{S}/\langle \tau\rangle$ is of general type, let $D$ be a general hyperplane of $\eta':=\rho^*\cO_W(1)$. Then $D$ is a smooth projective curve and $\rho^*\cO_W(1)|_D$ is a special divisor since $K_D=(K_{\hat{S}/\langle \tau \rangle}+\eta')|_D$. Then Clifford's lemma implies that $$\deg (\eta'|_D)=h^0(L)\geq 2(h^0(L)-2)$$ and hence $h^0(L)\leq 4$.

  Thus we may assume in the following that $\hat{S}/\langle \tau\rangle$ is not of general type. We then write $\iota_*\cO_{\hat{S}}=\cO_{K_{\hat{S}/\langle \tau\rangle}}\oplus V^{-1}$ and $K_{\hat{S}}=\iota^*(K_{\hat{S}/\langle \tau\rangle}+V)$, where $V$ is a  divisor and $2V$ is linearly equivalent to the branched divisor of $\iota$. We conclude that $\hat{a}^*L^-\sim \iota^*(K_{\hat{S}/\langle \tau\rangle}+V-\eta')$.
  Moreover, since $K_{\hat{S}/\langle \tau\rangle}-\eta'$ has no global sections,
   we see that the morphism on $\hat{S}$ induced by the complete linear system $|\hat{a}^*L^-|$ also factors birationally through $\psi$.

We can also study $|\hat{a}^*L^-|$ via the morphism $\hat{a}$. Since
 $$\hat{a}_*\cO_{\hat{S}}=\cO_{\hat{A}}\oplus (H^0(L)\otimes (L^-)^{-1}),$$ we see that the linear system $|\hat{a}^*L^-|$ separates general fibers of $\hat{a}$. By Reider's theorem (see \cite[Section 10.4]{BL}), we know that if $h^0(L)=h^0(L^-)\geq 5$ and there exists no elliptic curve $E$ on $\hat{A}$ such that $(L^-\cdot E)\leq 2$, $L$  is a very ample divisor on $\hat{A}$, which implies that $|\hat{a}^*L^-|$ induces a birational morphism of $\hat{S}$. Thus we have $h^0(L)\leq 4$ or there exists an elliptic curve $E$ on $\hat{A}$ such that $(L^-\cdot E)\leq 2$.

  We now argue that we still have $h^0(L)\leq 4$ in the latter case.  Assume the contrary that $h^0(L)\geq 5$. Then there exists a unique elliptic curve $E$ such that $d:=(L^-\cdot E)\leq 2$ by Poincar\'e's reducibility. Let $\hat{A}\rightarrow \hat{A}/E$ be the quotient and we have a commutative diagram
 \begin{eqnarray*}
 \xymatrix{
 \hat{S}\ar[r]^{\hat{a}}\ar[dr]_q & \hat{A}\ar[d]\\
 & \hat{A}/E.}
 \end{eqnarray*}
 It is easy to verify that $q$ is also a fibration since $\hat{a}$ is the Albanese morphism of $\hat{S}$ and we denote by $C$ a general fiber of $q$.  We consider the morphism $\hat{a}|_C: C\rightarrow E$. By base change, we have $$(\hat{a}|_C)_*\omega_C=\cO_E\oplus (L^-|_E)^{\oplus h^0(L)}.$$ Thus $g(C)=1+dh^0(L)$. On the other hand, $\deg (\hat{a}^*L^-|_{C})=d(h^0(L)+1)$ and $h^0(C, \hat{a}^*L^-|_C)=h^0(L)+d$. It is clear that $\hat{a}^*L^-|_C$ is special. Thus by Clifford's lemma, $d=2$. We apply again Reider's theorem to conclude that $|L^-|$ induces a morphism of  $\hat{A}$ of degree $2$ and we denote by $\tau_A$ the corresponding involution on $\hat{A}$. The quotient $\hat{A}/\langle \tau_A\rangle$ is a $\mathbb P^1$-bundle over $\hat{A}/E$. We then have the commutative diagram
 \begin{eqnarray}\label{involution}
 \xymatrix{
 \hat{S}\ar@/^2pc/[rr]^{\psi}\ar[r]\ar[d]^{\hat{a}} & \hat{S}/\langle \tau\rangle\ar[r]^{\rho}\ar[d] & W\ar@{^{(}->}[r] & \mathbb P^{h^0(L)-1}\\
 \hat{A}\ar[r] & \hat{A}/\langle \tau_A \rangle}
 \end{eqnarray}

 We still take $D$   a general section of $\eta'$. Consider the short exact sequence
 $$0\rightarrow \cO_{ \hat{S}/\langle \tau\rangle} \rightarrow \eta'\rightarrow \eta'|_D\rightarrow 0.$$
 Note that
 \begin{eqnarray}\label{h1}H^1(\hat{S}/\langle \tau\rangle, \eta')&=&H^1(\hat{S}, \eta)^{\tau}=H^1(\hat{S}, K_{\hat{S}}-\hat{a}^*L^-)^{\tau}\\\nonumber
 &\simeq& (H^1(\hat{S}, \hat{a}^*L^-)^{\vee})^{\tau}=(H^1(\hat{A}, L^- \oplus (\cO_A)^{\oplus h^0(L)})^{\vee})^{\tau_A}.
 \end{eqnarray}
 Thus $h^1(\hat{S}/\langle \tau\rangle, \eta')=h^0(L)q(\hat{A}/\langle \tau_A \rangle)=h^0(L)$. We thus conclude that $h^1(\eta'|_D)>0$ and hence $\eta
'|_D$ is again special.
We conclude again by Clifford's lemma that  $$h^0(L)=\deg(\eta'|_D)\geq 2(h^0(\eta'|_D)-1)\geq 2(h^0(L)-2)$$ and hence $h^0(L)\leq 4$.

\end{proof}
\subsection{The case that $h^0(L)=6$ cannot occur}

We  exclude the case that $h^0(L)=6$ via the symmetry induced by the finite Heisenberg group.

\begin{lemm}\label{symmetry}The case that $h^0(L)=6$ cannot occur.
\end{lemm}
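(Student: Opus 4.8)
In the case $h^0(L)=6$ we have, by Theorem~\ref{boundness}, that $r=0$, $\psi\colon \hat S\to W\subset\mathbb P^5$ is a degree-$3$ morphism onto a surface $W$ of degree $4$, and by Proposition~\ref{K_L} the representation of $\mathcal K_L$ on $H^0(\eta)\cong H^0(L)^\vee$ is the dual of the Schr\"odinger representation; in particular $K_L\simeq(\mathbb Z_6)^2$ or $K_L\simeq\mathbb Z_2\times\mathbb Z_6$, and $K_L$ acts faithfully on $W\subset\mathbb P^5$ through (the projectivization of) this representation. The plan is to show that no degree-$4$ surface $W\subset\mathbb P^5$ invariant under such a large Heisenberg-type group can support a degree-$3$ cover by a surface $\hat S$ with the numerical invariants we have computed ($K_{\hat S}^2=6h^0(L)^2=216$, $(K_{\hat S}\cdot\hat a^*L)=4h^0(L)^2=144$, $(\hat a^*L)^2=2h^0(L)(h^0(L)+1)=84$, $\eta=K_{\hat S}-\hat a^*L^-$, $(\eta^2)=2h^0(L)=12$, $(\eta\cdot\hat a^*L)=2h^0(L)(h^0(L)-1)=60$).

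**Key steps.** First I would classify the possible $W$: a nondegenerate surface of minimal degree $4=\operatorname{codim}+1$ in $\mathbb P^5$ is either the Veronese surface $v_2(\mathbb P^2)$, a rational normal scroll, or a cone over a rational normal quartic curve. Second, I would use the $\mathcal K_L$-equivariance: the group $\mathcal K_L$ acting through the (dual) Schr\"odinger representation on $\mathbb P^5$ has no fixed point (the representation is irreducible of dimension $6$), which immediately rules out the cone. For the scrolls, the ruling gives a pencil of lines, hence a map $W\dashrightarrow\mathbb P^1$ that must be $K_L$-equivariant; since $K_L$ is abelian and acts faithfully on the base $\mathbb P^1$ it would have to be cyclic, contradicting $K_L\simeq(\mathbb Z_6)^2$ or $\mathbb Z_2\times\mathbb Z_6$ — so $W$ cannot be a scroll either. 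Thus $W=v_2(\mathbb P^2)$, and $\psi$ factors as $\hat S\xrightarrow{\bar\psi}\mathbb P^2\xrightarrow{v_2}\mathbb P^5$ with $\bar\psi$ of degree $3$ and $\bar\psi^*\cO_{\mathbb P^2}(1)=\tfrac12\eta=:\xi$, a divisor class with $(\xi^2)=3$ and $(\xi\cdot\hat a^*L)=30$, $(K_{\hat S}\cdot\xi)=\tfrac12(K_{\hat S}\cdot\eta)=54$. Third — and this should be the crunch — I would derive a numerical contradiction from this degree-$3$ map to $\mathbb P^2$: pulling back a general line $\ell\subset\mathbb P^2$ gives a curve $C_\xi\in|\xi|$ with $(C_\xi^2)=3$ that maps $3$-to-$1$ onto $\mathbb P^1$, so $g(C_\xi)$ is bounded by Hurwitz/Castelnuovo for trigonal curves, while adjunction gives $2g(C_\xi)-2=(K_{\hat S}+\xi)\cdot\xi=54+3=57$, which is odd — an immediate contradiction. (If one prefers to avoid relying on $\xi$ being exactly half-integral, the same parity/adjunction contradiction can be extracted directly on $\eta$ together with the factorization through $v_2$, since $v_2^*\cO(1)=\cO_{\mathbb P^2}(2)$ forces $\eta$ to be $2$-divisible in $\operatorname{Pic}\hat S$ modulo the genus computation.)

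**Main obstacle.** The delicate point is the classification/elimination of the ruled ($W$ a scroll) case together with making the equivariance argument airtight: one must be careful that $\psi$ need not itself be the map given by $|\xi|$ on the nose (there could be base points or the factorization $\hat S\to\mathbb P^2$ could be only rational), so the cleanest route is probably to pass to the normalization $\nu\colon \bar W\to W$, note $\bar W$ is a (possibly weighted) $\mathbb P^1$-bundle or $\mathbb P^2$, lift the $K_L$-action, and run the "abelian group acting faithfully on $\mathbb P^1$ must be cyclic" argument upstairs; I expect the bookkeeping of the equivariant structure, rather than any single inequality, to be where the real work lies. Once $W=v_2(\mathbb P^2)$ is forced, the parity contradiction from adjunction is short and decisive.
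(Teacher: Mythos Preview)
Your overall strategy---classify degree-$4$ surfaces in $\mathbb P^5$, then eliminate each using the $\mathcal K_L$-symmetry---matches the paper's. Two points deserve comment.

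\textbf{The scroll case contains a genuine gap.} You write that the ruling gives a $K_L$-equivariant map $W\to\mathbb P^1$ and then assert that ``$K_L$ is abelian and acts faithfully on the base $\mathbb P^1$, so it would have to be cyclic.'' The faithfulness is unjustified: the induced homomorphism $\gamma\colon K_L\to\mathrm{PGL}_2$ can very well have a kernel. The paper handles this with more care. For $\mathbb P_{\mathbb P^1}(\cO(1)\oplus\cO(3))$ it observes that the unique negative section is a $K_L$-invariant line in $\mathbb P^5$, contradicting irreducibility of the Schr\"odinger representation directly. For $\mathbb P^1\times\mathbb P^1$ (embedded by $\cO(1,2)$) the paper argues in two steps: if $\gamma$ has cyclic image then a fixed point on the base gives a $K_L$-invariant line, again impossible; otherwise the abelian image must be $D_2\simeq(\mathbb Z_2)^2$, so $\ker\gamma\simeq(\mathbb Z_3)^2$ acts trivially on the second factor and hence (by faithfulness on $W$) faithfully on the first factor $\mathbb P^1$, which is impossible since $\mathrm{PGL}_2$ contains no subgroup $(\mathbb Z_3)^2$. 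Your ``uniform'' scroll argument misses this dichotomy. (Incidentally, for $h^0(L)=6$ with $\delta_1\mid\delta_2$ one has only $K_L\simeq(\mathbb Z_6)^2$; your alternative $\mathbb Z_2\times\mathbb Z_6$ does not occur.)

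\textbf{The Veronese case: your argument is different and neater, but check your numbers.} The paper rules out $W=v_2(\mathbb P^2)$ by classifying abelian subgroups of $\mathrm{PGL}_3$ and checking that a diagonal $(\mathbb Z_6)^2$-action on $\mathbb P^2$ is incompatible with the Schr\"odinger action on $H^0(\cO_{\mathbb P^2}(2))$. Your parity trick is a genuine shortcut: since $\psi$ is a morphism (Proposition~\ref{eventual=canonical}) it factors through $\bar\psi\colon\hat S\to\mathbb P^2$, and $\xi:=\bar\psi^*\cO_{\mathbb P^2}(1)$ is an honest line bundle with $2\xi=\eta$, so adjunction on a general $C_\xi\in|\xi|$ must give an even number. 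However, your arithmetic is off: from $K_{\hat S}^2=216$ and $(K_{\hat S}\cdot\hat a^*L)=144$ one gets $(K_{\hat S}\cdot\eta)=72$, hence $(K_{\hat S}\cdot\xi)=36$ (not $54$), and $(K_{\hat S}+\xi)\cdot\xi=39$, still odd. So the contradiction survives the correction, and this replaces the paper's $\mathrm{PGL}_3$ analysis by a one-line parity check.
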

\begin{proof}
In this case $W\subset \mathbb P^5$ is a surface of minimal degree. Thus we know that $W$ is smooth and $(W, \cO_{\mathbb P^5}(1)|_W)$ is isomorphic to one of the followings (see \cite[Theorem 1]{EH}):
\begin{itemize}
\item a cone over a rational quartic curve;
\item $(\mathbb P_{\mathbb P^1}(\cO_{\mathbb P^1}(1)\oplus \cO_{\mathbb P^1}(3)), H)$, where $H$ is the relative hyperplane divisor;
\item $(\mathbb P^1\times \mathbb P^1, \cO_{\mathbb P^1}(1)\boxtimes  \cO_{\mathbb P^1}(2))$;
\item $(\mathbb P^2, \cO_{\mathbb P^2}(2))$.
\end{itemize}

We shall apply the $\mathcal K_L$-equivariant structure to rule out all possibilities.

Note that $W$ admits an $(\mathbb Z_6\times \mathbb Z_6)$-automorphism and let  $\sigma$ and $\tau$ be two generators of $\mathcal K_L$ such that $\sigma$ is the cyclic  permutation $$z_0\rightarrow z_1\rightarrow z_2\cdots z_4\rightarrow z_5.$$ and $\tau$ induces $$z_i\rightarrow \zeta^i z_i,\; 0\leq i\leq 5,$$
Where $\zeta$ is a primitive 6-th root of unity.

We see that $K_L$ is an automorphism group of the pair $W\subset \mathbb P^5$  and $K_L$ has no fixed points.

We also recall that the automorphism group $\mathrm{PGL}_2$ of $\mathbb P^1$ has only $\mathbb Z_r$, $D_r$, $A_4$, $S_4$ and $A_5$  as finite subgroups and there is only one conjugacy class for each finite subgroup (see \cite{B2}).

In the first case, the only singular point of $W$ is preserved by $K_L$, which would imply that the Schr\"odinger representation is reducible. This is absurd.

In the second case, $W$ contains a unique line $D$ with self-intersection $-2$, which corresponding to the quotient $\cO_{\mathbb P^1}(1)\oplus \cO_{\mathbb P^1}(3)\rightarrow \cO_{\mathbb P^1}(1)$. Thus $K_L$ preserves $D$ and the ideal sheaf of $D$ is preserved by $\mathcal K_L$, which again contradicts the irreduciblity of the Schr\"odinger representation.

In the third case, since $K_L$ moves  lines to lines, it preserve the second projection $\mathbb P^1\times \mathbb P^1\rightarrow \mathbb P^1$. We now consider the action of $K_L$ on the base, which induces a homomorphism $\gamma: K_L\rightarrow \mathrm{PGL}_2$. If the image is a cyclic group, this cyclic group has a fixed point and hence $K_L$ preserves a line of $\mathbb P^5$, which is impossible as we have seen above. Thus the image of $\gamma$ has to be $D_2$ according to the above classification of subgroups of $\mathrm{PGL_2}$. But this implies that $\mathrm{PGL}_2$ contains $\mathbb Z_3\times \mathbb Z_3$, which is again impossible.

In the last case, $K_L$ preserve $W$ and hence we have an injective homomorphism $\gamma: K_L\rightarrow \mathrm{PGL}_3=\mathrm{Aut}(\mathbb P^2)$.  By the classification of finite subgroups of $ \mathrm{PGL}_3$ (see  \cite[Corollary 4.6, Theorem 4.7, Theorem 4.8]{Dol}), the image of $\gamma$ is intransitive and hence is  conjugate to groups of automorphisms generated by diagonal matrices
\begin{eqnarray*}
&&(x_0:x_1:x_2)\rightarrow (\zeta^{a_1} x_0: \zeta^{a_2}x_1:x_2)\\
&&(x_0:x_1:x_2)\rightarrow (\zeta^{b_1}x_0: \zeta^{b_2} x_1:x_2),
\end{eqnarray*}
where $a_1$, $a_2$, $b_1$ and $b_2$ are integers.
However we can then verify easily that that the representation $\mathcal K_L$ on $H^0(L)$ is not compatible with the action of $K_L$ on $ | \cO_{\mathbb P^2}(2)|$.
\end{proof}
\subsection{Remarks on the case that $h^0(L)=3$}
When $h^0(L)=3$, $\deg a_S=4$. We consider the Stein factorization $a_S: S\rightarrow \overline{S}\xrightarrow{\rho} A$. Since $a_{S*}\omega_S$ is locally free, $a_{S*}\cO_S=\rho_*\cO_{\overline{S}}=(a_{S*}\omega_S)^{\vee}$ is locally free and $R^1a_{S*}\cO_S=0$. Thus $\overline{S}$ has rational singularities and $\rho$ is a finite flat quadruple cover.
By the main result of \cite{HM}, we know that $\rho$ is determined by a totally decomposable section of $H^0(A, \wedge^2S^2\cM\otimes \det \cM^{-1})$. These totally decomposable sections are determined by Penegini and Polizzi in \cite[Proposition 2.3 and 2.4]{PP3} as $K_L$-invariant totally decomposable sections of $H^0(\hat{A}, \varphi_L^*(\wedge^2S^2\cM\otimes \det \cM^{-1}))\simeq H^0(\hat{A}, L)^{\oplus 15}$. To be more precise, $$\dim H^0(\hat{A}, \varphi_L^*(\wedge^2S^2\cM\otimes \det \cM^{-1}))^{K_L}=5$$ and there are a smooth conic $C_{A, L}$ inside $$|H^0(\hat{A}, \varphi_L^*(\wedge^2S^2\cM\otimes \det \cM^{-1}))^{K_L}|$$ parametrizing totally decomposable sections.

Thus a minimal surface with $p_g=q=2$ and $K_S^2=6$ corresponds to a point in $C_{A, L}$ such that the corresponding quadruple cover $\overline{S}$ has suitable singularities.

 When $A$ is general in the moduli spaces of $(1, 3)$-polarized abelian surfaces, Penegini and Polizzi determined the points in $C(A, L)$ such that $a_S$ is finite (see \cite[Proposition 2.6]{PP3}).
\subsection{Remarks on the case that $h^0(L)=4$}
In this case there are two possibilities. The polarization can be of type $(1, 4)$ or  $(2,2)$. The two different polarizations have different symmetries and need to be treated differently.

\begin{lemm}The case that $W\subset \mathbb P^3$ is an octic cannot occur.
\end{lemm}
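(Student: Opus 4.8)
The plan is to show that if $\psi\colon\hat S\to W\subset\mathbb P^3$ is birational onto a degree-$8$ surface, then the $\mathcal K_L$-symmetry forces a contradiction, just as in the proof of Lemma \ref{symmetry}. We are in the case $h^0(L)=4$, $r=0$, so $\varphi_L^*\cM=H^0(L)^\vee\otimes L^-$ is locally free of rank $3$, $\deg a_S=\deg\hat a=h^0(L)+1=5$, and $(\eta^2)=2h^0(L)=8$; since $\psi$ is birational, $\deg W=8$. Recall from Proposition \ref{K_L} that $\psi$ is $\mathcal K_L$-equivariant, the image $W$ is $K_L$-invariant, and $H^0(\eta)\simeq H^0(L)^\vee$ carries the dual Schr\"odinger representation of $\mathcal K_L$. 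When the polarization $L$ is of type $(1,4)$, the group $K_L\simeq(\mathbb Z_4)^2$ acts on $\mathbb P^3$ via the Schr\"odinger representation, with $\sigma$ the cyclic permutation $z_0\to z_1\to z_2\to z_3\to z_0$ and $\tau$ the diagonal action $z_i\mapsto \mathbf i^{\,i}z_i$ ($\mathbf i$ a primitive $4$-th root of unity), and this action is fixed-point-free on $\mathbb P^3$ and irreducible. When $L$ is of type $(2,2)$, $K_L\simeq(\mathbb Z_2)^2$ with a different (imprimitive) action; here I would handle both polarization types.

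First I would compute the arithmetic genus of a general hyperplane section $C\in|\eta|$: since $(K_{\hat S}\cdot\eta)=2h^0(L)^2=32$ and $(\eta^2)=8$, adjunction gives $2g(C)-2=(K_{\hat S}+\eta)\cdot\eta=40$, so $g(C)=21$. A non-degenerate degree-$8$ curve in $\mathbb P^3$ has, by Castelnuovo's bound (see \cite[Page 116]{ACGH}), genus at most the value obtained from $m=\lfloor\frac{8-1}{2}\rfloor=3$, $8-1=3\cdot 2+1$, namely $g\le \frac{m(m-1)}{2}\cdot 2+m\cdot 1=6+3=9$. Since $21>9$, this already produces a contradiction, \emph{provided} $C$ is reduced and irreducible — which it is, being a general member of the base-point-free-in-codimension-one system $|\eta|$ whose image spans $\mathbb P^3$ and has the property that $\psi$ restricted to $C$ is birational onto its (integral, non-degenerate) image, a degree-$8$ space curve. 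In fact this is exactly the ``$h^0(L)\le 4$ or octic'' dichotomy recorded in Theorem \ref{boundness}; the present lemma removes the octic by a sharper genus count. I would therefore phrase the argument as: the normalization of a general hyperplane section of $W$ is a smooth non-degenerate curve of degree $8$ in $\mathbb P^3$ whose genus, computed on $\hat S$ via adjunction, equals $21$, violating Castelnuovo's inequality; hence $W$ cannot be an octic.

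The main obstacle, and the point requiring care, is making sure the genus computation on $\hat S$ legitimately controls the genus of the \emph{normalized} hyperplane section downstairs, i.e. that $\psi\colon\hat S\to W$ being birational forces the pullback of a general hyperplane to be a smooth curve of the expected genus and that no correction term from contracted curves or from the difference between $C$ and its image intervenes. Since $\psi$ is given by the complete linear system $|\eta|$ (Lemma \ref{psi}) and is birational onto $W$, a general member $C$ is smooth by Bertini away from the finitely many points $\psi$ contracts, and $\psi|_C\colon C\to\psi(C)$ is birational onto an integral non-degenerate curve of degree $(\eta\cdot\eta)=8$ in $\mathbb P^3$; Castelnuovo's bound applies to the arithmetic genus of that image, which is at least the geometric genus $g(C)=21$. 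This contradiction is self-contained and does not even need the Heisenberg symmetry, though if one preferred a uniform treatment one could instead argue, as in Lemma \ref{symmetry}, that a degree-$8$ $K_L$-invariant surface in $\mathbb P^3$ admitting a fixed-point-free $(\mathbb Z_4)^2$- or $(\mathbb Z_2)^2$-action with the Schr\"odinger-type linearization does not exist; but the genus estimate is cleaner and I would present that.
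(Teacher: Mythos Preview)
Your argument has a genuine gap: the image $\psi(C)$ of a general hyperplane section $C\in|\eta|$ is \emph{not} a non-degenerate curve in $\mathbb P^3$. By definition $C$ maps into $W\cap H$ for a hyperplane $H\cong\mathbb P^2$, so $\psi(C)$ is a \emph{plane} curve of degree $8$. The Castelnuovo bound for a degree-$8$ curve in $\mathbb P^2$ is $\binom{7}{2}=21$, which is exactly the genus you computed; there is no contradiction. In fact the paper already uses precisely this equality in the proof of Theorem \ref{boundness} to conclude that, in the birational case with $h^0(L)=4$, the general hyperplane section $C$ is a smooth plane octic. So Castelnuovo alone cannot exclude this case, and the paper records the octic possibility in Theorem \ref{boundness} for exactly this reason.

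The paper's proof proceeds differently. From the fact that a general plane section of $W$ is smooth (the plane-octic equality above), $W$ is smooth in codimension one, hence normal as a hypersurface in $\mathbb P^3$, and $\psi_*\cO_{\hat S}=\cO_W$. Since $W\subset\mathbb P^3$ is a normal hypersurface, $h^1(\cO_W)=0$, while $h^1(\cO_{\hat S})=q(\hat S)=2$; the Leray spectral sequence then forces $R^1\psi_*\cO_{\hat S}$ to be a skyscraper sheaf of length $2$, supported on the non-rational singular points of $W$. But $K_L$ (of order $16$) acts on $W$ and permutes this support, so the orbit of any such point has size $1$ or $2$: either $K_L$ fixes a point of $\mathbb P^3$, or it fixes the line through the two points. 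Either way one obtains a proper $\mathcal K_L$-invariant linear subspace of $\mathbb P^3$, contradicting the irreducibility of the Schr\"odinger representation. Your suggested alternative via Heisenberg symmetry is thus closer in spirit to what is actually needed, but the key input is the control on the length of $R^1\psi_*\cO_{\hat S}$, not a genus bound.
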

\begin{proof}
Assume the contrary that $W\subset \mathbb P^3$ is an octic.
In this case, we have seen in the proof of Theorem \ref{boundness} that a general hyperplane section of $W$ is a smooth plane octic. Thus $W$ is smooth in codimension $1$ and thus as a divisor in $\mathbb P^3$, $W$ is normal. Hence $\psi_*\cO_{\hat{S}}=\cO_W$. Since $h^1(\cO_{\hat{S}})=2$ and $h^1(\cO_W)=0$, $R^1\psi_*\cO_{\hat{S}}$ is supported on singular points, where $W$ does not have rational singularities.  Since the length of $R^1\psi_*\cO_{\hat{S}}$ is $2$, the action of $K_L$ on $\mathbb P^3$ has a fixed point or a fixed line, which is a contradiction.

\end{proof}
\begin{lemm} $W$ cannot be a quadric in $\mathbb P^3$.
\end{lemm}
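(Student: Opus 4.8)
Suppose $W\subset\mathbb P^3$ is a quadric. Since $\hat S$ is irreducible and $W$ is non-degenerate, $W$ is an irreducible quadric of rank $\geq3$, so it is either a cone over a smooth conic or a smooth quadric $\simeq\mathbb P^1\times\mathbb P^1$ with $\cO_W(1)=\cO(1,1)$. The plan is to rule out the cone, to reduce the smooth case to the situation where $L$ is of type $(2,2)$ by means of the Heisenberg symmetry, and then to reach a numerical contradiction.

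If $W$ is a cone, its vertex is the unique singular point of $W$, hence a $\mathcal K_L$-fixed point of $\mathbb P^3$ by Proposition \ref{K_L}; this produces a $\mathcal K_L$-stable line in $H^0(\eta)^{\vee}$, contradicting the irreducibility of the (dual) Schr\"odinger representation. So $W\simeq\mathbb P^1\times\mathbb P^1$, $\cO_W(1)=\cO(1,1)$, and $K_L$ acts faithfully on $W$ (through the faithful action of $\mathcal K_L$ on $\mathbb P(H^0(\eta)^{\vee})$, $W$ being non-degenerate). The action permutes the two rulings, giving $\chi\colon K_L\to\mathbb Z_2$ with kernel $K_0$; then $K_0$ preserves each ruling and acts on each base $\mathbb P^1$, so there is an embedding $(\gamma_1,\gamma_2)\colon K_0\hookrightarrow\PGL_2\times\PGL_2$, with each $\gamma_i(K_0)$ cyclic or a Klein four-group. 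If some $\gamma_i(K_0)$ were cyclic, it would fix a point of the corresponding $\mathbb P^1$, giving a $K_0$-invariant ruling line $\ell\subset W$: when $K_0=K_L$ this already contradicts irreducibility, and when $[K_L:K_0]=2$, for $g\in K_L\setminus K_0$ the lines $\ell$ and $g\ell$ lie in distinct rulings, hence meet, the pair $\{\ell,g\ell\}$ is $K_L$-stable ($K_L$ being abelian), and the plane it spans is $K_L$-invariant --- again a contradiction. Hence $K_0\hookrightarrow(\mathbb Z_2)^2\times(\mathbb Z_2)^2$ is an elementary abelian $2$-group with $[K_L:K_0]\leq2$; since $h^0(L)=4$, this rules out $K_L\simeq\mathbb Z_4\times\mathbb Z_4$, so $L$ is of type $(2,2)$ and $K_L\simeq(\mathbb Z_2)^4$.

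It remains to exclude the smooth quadric with $L$ of type $(2,2)$. Here $\deg\hat a=1+h^0(L)=5$, and Lemma \ref{crf1} together with the computations of Section~4 give $(K_{\hat S}^2)=96$, $(K_{\hat S}\cdot\hat a^*L^-)=64$, $(\hat a^*L^-)^2=40$, $\chi(\cO_{\hat S})=16$, $p_g(\hat S)=17$, while $\eta=K_{\hat S}-\hat a^*L^-=F_1+F_2$ with $F_i:=\psi^*(\mathrm{pt}_i)$, $F_i^2=0$, $F_1\cdot F_2=4$. By Proposition \ref{eventual=canonical}, $\hat\Phi$ identifies $\hat S$ birationally with its image $W'\subset\hat A\times W$. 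Projecting $\hat A\times W$ onto $\hat A\times\mathbb P^1$ along a ruling gives a surface $V_i\subset\hat A\times\mathbb P^1$ with $\deg(V_i/\hat A)\in\{1,5\}$ (as $5$ is prime), and $\hat S$ is birational to $V_i$ precisely when $\deg(V_i/\hat A)=5$; the case $\deg(V_1/\hat A)=\deg(V_2/\hat A)=1$ is impossible, for then the five points of a general fibre of $\hat a$ would share the same image in $W$, forcing them to coincide in $\hat A\times W$. So, say, $\hat S$ is birational to $V:=V_1\in|pr_{\hat A}^*M\otimes pr_{\mathbb P^1}^*\cO(5)|$, where $M$ is the class of $\hat a(F_1)$; since $M$ is effective with $M^2\geq\tfrac12\chi(\cO_{\hat S})=8>0$ and $\hat A$ is an abelian surface, $M$ is ample and $V$ is ample in $\hat A\times\mathbb P^1$. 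Writing $\mu\colon\hat S\to V$ for the resolution, one has $K_{\hat S}=\mu^*\bigl(pr_{\mathbb P^1}^*\cO(1)\otimes pr_{\hat A}^*L^-\otimes\cO_V(D)\bigr)$, with $D$ the pullback on $V$ of a point under the rational map $V\dashrightarrow\mathbb P^1$ induced by the second ruling of $\psi$, while adjunction gives $K_V=\bigl(pr_{\mathbb P^1}^*\cO(3)\otimes pr_{\hat A}^*M\bigr)|_V$; comparing these and invoking the Lefschetz restriction $\Pic(\hat A\times\mathbb P^1)\hookrightarrow\Pic(V)$ should force $\cO_{\mathbb P^1}(-2)\cong\cO_{\mathbb P^1}$, which is absurd. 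I expect the crux to be twofold: controlling the singularities of $V$ (equivalently of $W'$) so that the comparison with $K_V$ and the Lefschetz argument are legitimate --- for which, as in Theorem \ref{AC3}, one should show that $W'$ is normal --- and, in the case where the second ruling does not factor through $\hat a$, handling the class $D$, which is then not a priori the restriction of a class from $\hat A\times\mathbb P^1$, by also bringing in the other projection $V_2$ or by reusing the finite Heisenberg symmetry to conclude.
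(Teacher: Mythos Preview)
Your treatment of the cone case and your reduction to type $(2,2)$ via the action on the two rulings are correct, and give a pleasant alternative to the paper's bare assertion that no $K_L$-invariant quadric exists when $L$ is of type $(1,4)$.

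The genuine gap is the $(2,2)$ case, where your argument is explicitly a sketch. The two obstacles you flag are real and I do not see how to close them along the lines you propose. First, the class $D$ on $V$ coming from the second ruling need not lie in the image of $\Pic(\hat A\times\mathbb P^1)\to\Pic(V)$, so the comparison of $K_{\hat S}$ with $\mu^*K_V$ does not produce a relation in $\Pic(\hat A\times\mathbb P^1)$; even granting normality of $V$ and the Lefschetz injection, you only get a relation in $\Pic(V)$ involving $D$, which is no contradiction. Second, your inequality $M^2\geq\tfrac12\chi(\cO_{\hat S})=8$ is unjustified: at this stage the two rulings are not known to be symmetric, so you cannot split $(K_{\hat S}\cdot\eta)=32$ or $(\hat a^*L^-\cdot\eta)=24$ evenly, and there is no obvious cohomological reason for the bound either.

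The paper's argument for type $(2,2)$ is quite different and avoids these issues. Using the classification of finite subgroups of $\mathrm{Aut}(\mathbb P^1\times\mathbb P^1)$ one gets, after conjugation, a splitting $K_L\simeq K_1\times K_2$ with $K_i\simeq(\mathbb Z_2)^2$ acting faithfully on the $i$-th $\mathbb P^1$ and trivially on the other. The quotient $S_i:=\hat S/K_i$ then carries a fibration $q_i\colon S_i\to\mathbb P^1$ whose general fibre $C_i$ satisfies $4(2g(C_i)-2)=(K_{\hat S}\cdot\eta_{3-i})$; since $(K_{\hat S}\cdot\eta)=32$ and $g(C_i)\leq 2$ would make $q_i$ birationally trivial, one gets $g(C_1)=g(C_2)=3$. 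Descending to $S=S_i/K_{3-i}$ yields two genus-$3$ pencils $t_i\colon S\to\mathbb P^1$, and Xiao's theorem on the restriction $H^0(K_S)\to H^0(K_{C_i})$ forces a general member of $|K_S|$ to contain a fibre of each $t_i$. Pulling back, $K_{\hat S}-4\eta$ is effective, contradicting $h^0(K_{\hat S})=17<25=h^0(\mathbb P^1\times\mathbb P^1,\cO(4,4))$. The key move you are missing is to exploit the product splitting of the $K_L$-action to pass all the way down to $S$ and invoke Xiao's result, rather than working only on $\hat S$.
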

\begin{proof} It is easy to verify that there exists no $K_L$-invariant quadrics when $L$ is of $(1, 4)$ type. However, if $L$ is of type $(2, 2)$, we get no contradictions from the symmetry of $K_L$.  In this case, $K_L\simeq (\mathbb Z_2)^{\oplus 4}$ and we may assume that $W\simeq \mathbb P^1\times \mathbb P^1$ is a smooth quadric. By the classification of finite subgroups of $\mathrm{Aut}(\mathbb P^1\times \mathbb P^1)$ (see \cite[Theorem 4.9]{Dol}), after a conjugation, we may assume that $K_L\simeq K_1\times K_2$, where $K_i\simeq \mathbb Z_2\times \mathbb Z_2$ acting faithfully on the $i$-th factor of $W=\mathbb P^1\times \mathbb P^1$
and acting trivially on the $(3-i)$-th factor for $i=1, 2$.  Let $\eta_i=p_i^*\cO_{\mathbb P^1}(1)$, where $p_i: \hat{S}\rightarrow \mathbb P^1$ is the natural morphism onto the $i$-th factor of $W$. It is easy to verify that $p_i$ is a fibration. Note that $\eta_1+\eta_2=\eta$ and recall that $$(K_{\hat{S}}\cdot (\eta_1+\eta_2))=(K_{\hat{S}}^2)-(K_{\hat{S}}\cdot \hat{a}^*L^-)=32.$$

Let $S_i:=\hat{S}/K_i$ and $B_i:=\hat{A}/K_i$ for $i=1, 2$. Note that $p_{3-i}$ is preserved by $K_i$ and we thus have a factorization $p_{3-i}: \hat{S}\rightarrow S_i\xrightarrow{q_i} \mathbb P^1$ and the following commutative diagram:
\begin{eqnarray*}
\xymatrix{
\mathbb P^1\ar@{=}[d] &\hat{S}\ar[l]^{p_{3-i}}\ar[r]\ar[d]^{/K_i} & \hat{A}\ar[d]^{/K_i}\\
\mathbb P^1 & S_i\ar[r] \ar[l]^{q_i}& B_i.}
\end{eqnarray*}
We see that a general fiber of $p_{3-i}$ is an \'etale $K_i$-cover of the corresponding fiber $C_i$ of $q_i$. In particular, $(K_{\hat{S}}\cdot \eta_{3-i})=4(2g(C_i)-2)$  for $i=1, 2$. Note that $g(C_i)>2$, since otherwise $q_i$ is birationally trivial (see \cite[Lemme in the appendix]{D}), which is obviously impossible. We thus conclude that $g(C_1)=g(C_2)=3$.

Note that $S_i/K_{3-i}=S$ and the action of $K_{3-i}$  on $\mathbb P^1$ is conjugate to the subgroup generated by two involutions $$(x: y)\rightarrow (y: x), \; (x: y)\rightarrow (x: -y).$$ We have the following commutative diagram:
\begin{eqnarray*}
\xymatrix{
\mathbb P^1\ar[d]_{/K_{3-i}} &S_i\ar[l]^{q_{i}}\ar[r]\ar[d]^{/K_{3-i}} &B_i\ar[d]\\
\mathbb P^1 & S\ar[r] \ar[l]^{t_i}& A.}
\end{eqnarray*}
Thus $S$ admits two fibrations $t_1$ and $t_2$ onto $\mathbb P^1$, whose general fibers still denoted by $C_i$ are genus $3$ curves. Let $f_1\in H^0(K_S)$ be a  section by pulling-back the canonical divisor of $A$ and let $f_2\in H^0(K_S)$ be a section coming from $H^0(A, \cM)$. By \cite[the proof of Theorem 1]{X2}, the image of $$H^0(S, K_S)\rightarrow H^0(C_i, K_{C_i})$$ is of dimension $1$. Thus,
 for $t\in \mathbb C$ general, the divisor of $f_1-tf_2$ contains a general fiber of $t_1$ and $t_2$. This implies that $K_{\hat{S}}-4\eta$ is effective. This is a contradiction, since $h^0(K_{\hat{S}})=17$ and $h^0(\mathbb P^1\times \mathbb P^1, \cO_{\mathbb P^1}(4)\boxtimes\cO_{\mathbb P^1}(4) )=25$.
 \end{proof}

\begin{lemm}The case that $W$ is a quartic in $\mathbb P^3$ cannot occur.

\end{lemm}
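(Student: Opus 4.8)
The plan is to exploit the $\mathcal K_L$-equivariance from Proposition \ref{K_L} together with the numerical constraints coming from Lemma \ref{crf1} and Theorem \ref{boundness}. We are in the situation $r=0$, $h^0(L)=4$, and $\psi\colon \hat S\to W\subset\mathbb P^3$ is birational onto a quartic surface $W$; recall $(\eta^2)=2h^0(L)=8$, $(K_{\hat S}\cdot\eta)=2h^0(L)^2=32$, $(K_{\hat S}^2)=6h^0(L)^2=96$, $(\hat a^*L^-)^2=2h^0(L)(h^0(L)+1)=40$, and $(\hat a^*L^-\cdot\eta)=2h^0(L)(h^0(L)-1)=24$. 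The first step is to pin down the geometry of $W$: since $\psi$ is birational and a general hyperplane section $C$ of $\eta$ is a curve of genus $h^0(L)^2+h^0(L)+1=21$ that is cut out on a plane quartic curve of arithmetic genus $3$, the discrepancy forces $W$ (or rather $\hat S$) to have many singularities/contracted curves. I would compute $\psi_*\cO_{\hat S}$ and $R^1\psi_*\cO_{\hat S}$ as in the octic case: $W$ being a quartic in $\mathbb P^3$ is Gorenstein with $K_W=\cO_W$, so $h^1(\cO_W)=h^2(\cO_W)=0$ while $h^1(\cO_{\hat S})=2$, whence $R^1\psi_*\cO_{\hat S}$ is a length-$2$ sheaf supported on the non-rational singular points of $W$.

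The second and main step is the symmetry argument. The polarization $L$ has type $(1,4)$ or $(2,2)$; in either case $K_L$ acts faithfully on $\mathbb P^3=\mathbb P(H^0(\eta))$ via the dual Schrödinger representation, which is irreducible, and $W$ is $K_L$-invariant. The length-$2$ support of $R^1\psi_*\cO_{\hat S}$ (together with the length-$2$ sheaf $R^1\psi_*\cO_{\hat S}$ itself, or its associated ideal) is then $K_L$-invariant: a $K_L$-invariant subscheme of $\mathbb P^3$ of length $2$ is either two points swapped by a character quotient or a point with a tangent direction, and in all cases the $K_L$-action would have to factor through a cyclic quotient acting on the span of those two points, producing a $1$- or $2$-dimensional $K_L$-invariant subspace of $H^0(\eta)$ — contradicting irreducibility of the Schrödinger representation, exactly as in the octic and quadric-with-$(1,4)$ cases. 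For the $(2,2)$ polarization one must be slightly more careful since then $K_L\simeq(\mathbb Z_2)^4$ has lots of invariant lines in $\mathbb P^3$; here I would instead argue as in the quadric $(2,2)$ case, analyzing the induced fibration structures: a $K_L$-invariant quartic surface in $\mathbb P^3$ admitting a faithful $(\mathbb Z_2)^4$-action with no fixed point is extremely restrictive, and one derives a contradiction either from the classification of such quartics or by pulling back pencils and bounding $(K_{\hat S}\cdot\eta)$ against the effectivity of $K_{\hat S}-c\,\eta$ for suitable $c$, mirroring the last paragraph of the preceding quadric lemma.

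I expect the main obstacle to be the $(2,2)$ case: unlike the $(1,4)$ case, invariance of $W$ under $K_L$ does not immediately contradict irreducibility of the Schrödinger representation, so one genuinely has to understand $K_L$-invariant quartics in $\mathbb P^3$ with a free $(\mathbb Z_2)^4$-action, or replace this by a numerical contradiction. The cleanest route is probably: show $W$ must be smooth (using that the singular locus would be $K_L$-invariant of small dimension, again contradicting irreducibility), so $\hat S\to W$ is a birational morphism onto a smooth K3-like quartic with $h^1(\cO_W)=0$ — but then $R^1\psi_*\cO_{\hat S}$ has length $2$ supported on the smooth locus, which is impossible since a birational morphism to a smooth surface from a smooth surface has $R^1\psi_*\cO_{\hat S}=0$. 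This last observation, if it goes through, kills all polarization types uniformly and is the shortest path; I would try it first, falling back to the case-by-case Heisenberg analysis only if the smoothness of $W$ cannot be forced.
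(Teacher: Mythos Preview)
Your proposal rests on a misreading of the setup. When $W\subset\mathbb P^3$ is a quartic, the map $\psi$ is \emph{not} birational: since $(\eta^2)=2h^0(L)=8$ while $\deg W=4$, the map $\psi\colon\hat S\to W$ has degree $2$. (The birational case corresponds to the octic, which was already excluded in the previous lemma.) This invalidates both pillars of your plan. First, with $\deg\psi=2$ one has $\psi_*\cO_{\hat S}=\cO_W\oplus V^{-1}$ for some line bundle $V$ on the quotient, so the Leray computation no longer yields a length-$2$ torsion sheaf $R^1\psi_*\cO_{\hat S}$; the comparison $h^1(\cO_{\hat S})=2$ versus $h^1(\cO_W)=0$ tells you nothing useful. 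Second, your ``cleanest route'' collapses entirely: even if $W$ were a smooth K3 quartic, a degree-$2$ map from $\hat S$ certainly need not satisfy $R^1\psi_*\cO_{\hat S}=0$, and there is no contradiction.

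The paper's proof proceeds along completely different lines, exploiting precisely the degree-$2$ structure you missed. Write $\tau$ for the covering involution of $\psi$. The key observation (already made in the proof of Theorem~\ref{boundness}) is that the linear system $|\hat a^*L^-|$ also factors through the quotient by $\tau$, so there is an induced involution $\tau_A$ on $\hat A$ and the map $\varphi_{|L|}$ factors through $\hat A/\langle\tau_A\rangle$. One then analyzes this quotient case by case. For $L$ of type $(1,4)$, the classification of \cite{BLvS} shows $\hat A/\langle\tau_A\rangle$ is a $\mathbb P^1$-bundle over an elliptic curve; feeding this into the $h^1(\eta'|_D)$ computation below (\ref{h1}) gives $h^1(\eta'|_D)\ge 3$, which contradicts the fact that a general hyperplane section $D$ maps birationally to a plane quartic. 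For $L=2\Theta$ of type $(2,2)$ with $\Theta$ irreducible, $\tau_A=(-1)_{\hat A}$ and one shows $K_{\hat S/\langle\tau\rangle}$ is trivial, forcing $\hat a$ to be quasi-\'etale, which is absurd; the split case reduces to the previous arguments. None of this involves the $R^1\psi_*$ or Heisenberg-invariant-subspace mechanism you proposed.
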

\begin{proof}We apply the same argument in the proof of Theorem \ref{boundness}.

When $W$ is a quartic, $\psi: \hat{S}\rightarrow W$ is of degree $2$. Let $\tau$ and $\tau_A$ be respectively the involution on $\hat{S}$ and $\hat{A}$.
In this case, the morphism induced by $|\hat{a}^*L|$ also factors birationally through $\psi$. The  linear system $|L|$ also factors through the quotient of $\tau_A$. We recall the commutative diagram (\ref{involution}):
\begin{eqnarray*}
 \xymatrix{
 \hat{S}\ar@/^2pc/[rr]^{\psi}\ar[r]\ar[d]^{\hat{a}} & \hat{S}/\langle \tau\rangle\ar[r]^{\rho}\ar[d]^{a_{\tau}} & W\ar@{^{(}->}[r] & \mathbb P^{h^0(L)-1}\\
 \hat{A}\ar[r] & \hat{A}/\langle \tau_A \rangle.}
 \end{eqnarray*}

 If $L$ is a $(1, 4)$-polarization, we apply the results in \cite{BLvS}.
 By \cite[Section 2]{BLvS}, we know that
 the morphism induced by $|L|$ is not birational onto its image iff  there exists an elliptic curve $E$ on $\hat{A}$ such that $(E\cdot L)=2$ and in this case the morphism induced by $|L|$ is  birationally equivalent to the quotient of $\hat{A}$ by $\tau_A$, where  $ \hat{A}/\langle \tau_A \rangle$ is a $\mathbb P^1$-bundle over $\hat{A}/E$. Thus $q(\hat{S}/\tau)=1$. We now apply the same argument below (\ref{involution}). Let $\eta'=\rho^*\cO_W(1)$ and $D$ a general section of $\eta'$. By the same computation in (\ref{h1}), we see that $h^1(\eta'|_D)\geq h^1(\eta')-1\geq 3$. On the other hand, since $\rho$ is birational,  $D$ is birational onto a quartic in $\mathbb P^2$. Thus $\eta'|_D-K_D$ is effective, which is a contradiction.


 If $L=2\Theta$ is a $(2, 2)$-polarization and $\Theta$ is an irreducible principal polarization, $|L|$ defines an embedding of  $\hat{A}/\langle (-1)_{\hat{A}}\rangle$ in $\mathbb P^3$. Thus $\tau_A=(-1)_{\hat{A}}$.
Note that the canonical bundle of $\hat{A}/\langle \tau_A \rangle$ is trivial and $K_{\hat{S}/\langle \tau\rangle}$ is a subsheaf of $\rho^*K_W=\cO_{\hat{S}/\langle \tau\rangle}$. Thus $K_{\hat{S}/\langle \tau\rangle}$ is also trivial. Thus $a_{\tau}$ is quasi-\'etale, which implies that the quotient $\hat{a}$ is also quasi-\'etale, which is impossible. We then assume that $(\hat{A}, \Theta)\simeq (E_1, \Theta_1)\times (E_2, \Theta_2)$ is a split PPAV.  Then $\tau_A$ could be $(-1)_{\hat{A}}$, or $(-1_{E_1})\times \mathrm{Id}_{E_2}$, or $\mathrm{Id}_{E_1}\times (-1_{E_2})$.
In the first case, we apply the same argument when $\Theta$ is irreducible to get a contradiction.
In the last two cases, we can argue exactly as the $(1, 4)$-polarization case to get a contradiction.

\end{proof}

\end{document}